\definecolor{darkred}{rgb}{0.7,0,0}
\newcounter {res}[section]
\numberwithin{res}{section}
\newtheorem{thm}[res]{Theorem}
\newtheorem{lem}[res]{Lemma}
\newtheorem{prop}[res]{Proposition}
\theoremstyle{definition}
\newtheorem{notation}[res]{Notation}
\newtheorem{dfn}[res]{Definition}
\newtheorem{rmk}[res]{Remark}
\newcommand{\ZZ}{\ensuremath{\mathbb{Z}}} 
\newcommand{\CC}{\ensuremath{\mathbb{C}}} 
\newcommand{\RR}{\ensuremath{\mathbb{R}}} 
\newcommand{\BB}{\ensuremath{\mathbb{B}}}
\renewcommand{\SS}{\ensuremath{\mathbb{S}}} 
\newcommand{\lk}{\mathrm{lk}}
\newcommand{\id}{\mathrm{id}}
\let\oldmarginpar\marginpar
\renewcommand\marginpar[1]{\oldmarginpar{\color{red}\fbox{\begin{minipage}{3cm} \footnotesize #1 \end{minipage}}}}
\title{A signature invariant for knotted Klein graphs}
\author{Catherine Gille}
\address{Univ Paris Diderot, Institut de Mathématiques de Jussieu-Paris Rive Gauche, CNRS, Sorbonne Université. Campus des Grands Moulins, bâtiment Sophie-Germain, case 7012, 75205 Paris cedex 13, France}
\email{\href{mailto:catherine.gille@imj-prg.fr}{catherine.gille@imj-prg.fr}}
\author{Louis-Hadrien Robert}
\address{Université de Genève, 2-4 rue du Lièvre, Case postale 64, 1211 Genève 4, Switzerland}
\email{\href{mailto:louis-hadrien.robert@unige.ch}{louis-hadrien.robert@unige.ch}}
\newcommand{\kg}{\ensuremath{D_4}}
\renewcommand{\aa}{\ensuremath{a}}
\newcommand{\bb}{\ensuremath{b}}
\newcommand{\cc}{\ensuremath{c}}
\newcommand{\imagesfolder}{.}
\newcommand{\NB}[1]{\ensuremath{\vcenter{\hbox{#1}}}}
\begin{document}

\begin{abstract}
   We define some signature invariants for a class of knotted trivalent graphs using branched covers. We relate
  them to classical signatures of knots and links. Finally, we
  explain how to compute these invariants through the example of Kinoshita's knotted theta graph. 
\end{abstract}
\maketitle

\section{Introduction}
\label{sec:intro}

The notion of knotted graph generalizes the notion of link. It has
direct applications in stereo-chemistry \cite{MR1781912, MR891813}.
On the one hand, the classification of knotted graphs can be seen as an
extension of the classification problem for knots. On the other hand,
given a knotted graph, one can look at all its sub-links. Kinoshita
\cite{MR0102819, MR0312485} gave an example (see
Figure~\ref{fig:kinosym}) of a non-trivial knotted theta graph such that all the
three sub-knots are trivial (see \cite{MR3541985} for more example of
\emph{Brunnian} theta graphs). Hence it is necessary to develop specific
invariants for knotted graphs.

We restrict ourselves to a certain class of trivalent graphs in $\SS^3$ with
an edge-coloring called 3-Hamiltonian Klein graphs. The aim of this
paper is to define some signature-like invariants for knotted such graphs.

In \cite{GL78}, Gordon and Litherland explain how to compute
signatures of knots from a non-orientable spanning surface $F$ in
$\BB^4$. It is the signature of the double branched cover of
$\mathbb{B}^4$ along $F$ corrected by the normal Euler number of $F$
(see also \cite{KT76}). In this paper, we adopt this 4-dimensional point of view.

\begin{figure}[ht]
  \centering
\vspace{-0.5cm}
  \tikz{\input{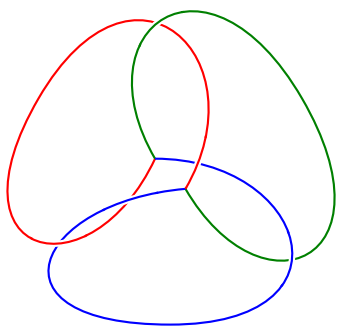}}
  \caption{Kinoshita's knotted theta graph.}
  \label{fig:kinosym}
\end{figure}

A Klein graph $\Gamma$ is a trivalent graph endowed with a 3-coloring of its edges. For any knotted Klein graph in $\SS^3$, one can
consider its \emph{Klein cover}, it is a branched $\ZZ_2\times
\ZZ_2$-cover with $\Gamma$ as branched locus. Given a spanning
foam\footnote{Foams are natural cobordisms when working with graphs:
they are surfaces with some singularities (see
Section~\ref{sec:graphs-foams}).  Here we do not suppose any kind of
orientability conditions on $F$.} $F$ for $\Gamma$, one can construct $W_F$ the
Klein cover of $\mathbb{B}^4$ along $F$. 

We define invariants of $\Gamma$ using the signature of $W_F$. For doing so, we define normal Euler numbers of foams. It turns out that if $\Gamma$ is 3-Hamiltonian\footnote{If the graph $\Gamma$ is
\emph{not} $3$-Hamiltonian, we can still define one signature
invariant, which turns out to depends only on the sub-links of
$\Gamma$.}, there are two ways to define the normal Euler numbers yielding different invariants (see Theorem~\ref{thm:main}). The computation for the Kinoshita knotted graph shows that this graph is non-trivial (and even chiral). 

Moreover, we investigate the relations between our invariants and the signatures of the different knots and links related to the knotted graph $\Gamma$. The identities we find (see Proposition~\ref{prop:weaksignaturebicolor}) can be thought of as consistency constraints between these signatures. The concept of foam enables to interpret these constraints geometrically.

For defining our invariants, we need the notion of normal Euler
numbers of immersed surfaces with boundaries. 
For this, we use linking numbers of rationally null-homologous curves in arbitrary 3-manifolds (see
\cite{MR3586621} for a gentle introduction on this notion). 
The invariance of the signatures follows from the
G-signature theorem \cite{MR0236952} in dimension 4 (see \cite{GSign}
for an elementary approach).
Finally,
in order to compute our invariants on an example, we use a result of Przytycki and
Yasuhara \cite{PY} which calculates the modification by surgery of the linking matrix of a link
in a rational homology spheres.

\subsection{Structure of the paper}
\label{sec:structure-paper}

In Section~\ref{sec:kleincovers}, we introduce the notion of Klein
graph, Klein foam and Klein cover. In Section~\ref{sec:invariants}
we define the invariants. For this we recall the notion of normal
Euler numbers in Section~\ref{sec:normal-euler-numbers-1}. The rest of
Section~\ref{sec:signature-invariant} is dedicated to the proof of
invariance: Section~\ref{sec:normal-euler-number} contains two
technical lemmas about normal Euler
numbers. Section~\ref{sec:proof-main-theorem} contains the proof of
the Theorem~\ref{thm:main}.  
Finally in Section~\ref{sec:an-example}, we compute our signature invariants
on Kinoshita's knotted graph.

\subsection{Acknowledgment}
\label{sec:acknowledgment}
The authors thank Christian Blanchet, Christine Lescop and Pierre
Vogel for enlightening conversations and Brendan Owens for his historical remarks on a previous version of this paper. L.-H.~R thanks the Université
Paris Diderot for its hospitality. L.-H.~R. was supported by the NCCR SwissMAP, funded by the Swiss National Science Foundation.

\section{Klein covers}
\label{sec:kleincovers}

\subsection{Graphs and foams}
\label{sec:graphs-foams}
Throughout the paper $\kg$ denotes the group $\ZZ_2\times \ZZ_2$ (we use the multiplication convention for the group law) and $\kg^*$ denote the set $\kg\setminus\{1\}$. The elements of $\kg^*$ are denoted by $a$, $b$ and $c$ and are represented in pictures by red, blue and green respectively. 

\begin{dfn}
  \label{dfn:graphs}
  An abstract \emph{Klein graph}  is a finite trivalent multi-graph $\Gamma$ with an edge-coloring by $\kg^*$ (as usual in graph theory we require that the colors of two adjacent edges are different). It is \emph{3-Hamiltonian} if for any two elements $i$ and $j$ of $\kg^*$, the sub-graph $\Gamma_{ij}$ consisting of edges colored by $i$ or $j$ is connected. From Section~\ref{sec:signature-invariant} on, all Klein graphs are supposed to be 3-Hamiltonian.

A \emph{knotted Klein graph} is a Klein graph $\Gamma$ together with a smooth embedding of $\Gamma$ in a manifold of dimension 3. If the manifold is not given it is assumed to be $\SS^3$.
\end{dfn}
 \begin{rmk}
   \label{rmk:graphs}
  \begin{enumerate}
  \item We should explain what is meant by \emph{smooth embedding} of an abstract Klein graph. We require that each edge is smoothly embedded and that for every vertex, none of the three tangent vector is positively co-linear with any of the two others.
\[
\begin{tikzpicture}[scale =0.6]
  \begin{scope}
  \draw[red] (0,0) -- (90:1cm) node[pos=1.3] {$a$};
  \draw[blue] (0,0) -- (210:1cm) node[pos=1.3] {$b$};
  \draw[black!50!green] (0,0) -- (-30:1cm) node[pos=1.3] {$c$};
\end{scope}

\begin{scope}[xshift = 4cm]
  \draw[red] (0,0) -- (30:1cm) node[pos=1.3]  {$a$};
  \draw[blue] (0,0) --(210 :1cm) node[pos=1.3] {$b$};
  \draw[black!50!green] (0,0) -- (-60:1cm) node[pos=1.3] {$c$};
\end{scope}

\begin{scope}[xshift = 8cm]
  \draw[red] (0,0)  .. controls +(0,1) and +(0,0) .. (45:1cm); 
  \node[red] at (35:1cm) {$a$};
  \draw[blue] (0,0) .. controls +(0,1) and +(0,0) ..  (135 :1cm); 
  \node[blue] at (145:1cm) {$b$};
  \draw[black!40!green] (0,0) -- (-90:1cm) node[pos=1.3] {$c$};
\draw[thick, red] (-1,-1) -- (1,1);
\draw[thick, red] (-1,1) -- (1,-1);
\end{scope}
\end{tikzpicture}
\]
\item We consider embedded Klein graphs up to ambient isotopy. In a diagrammatic approach, this means that a graph is considered up to the classical Reidemeister moves and the following additional ones (see \cite[Proposition 1.6]{lewar2013}):
\begin{align*}
\begin{array}{cc}
\NB{\begin{tikzpicture}[scale =0.7]
  \begin{scope}
  \draw (0,0.5) -- (90:1cm);
  \draw (0,0.5) -- (210:1cm);
  \draw (0,0.5) -- (-30:1cm);
\end{scope}

\node at (2, 0) {$\leftrightsquigarrow$};
\node at (-2, 0) {$\leftrightsquigarrow$};

\begin{scope}[xshift = 4cm]
  \draw (0,0.5) -- (90:1cm) ;
  \draw (0,0.5) .. controls (-0:0.7) and  +(30:0.2cm) .. (210 :1cm);
  \fill[white] (0, -0.06) circle (0.07);
  \draw (0,0.5)  .. controls (180:0.7cm) and +(150: 0.2cm) .. (-30:1cm);
\end{scope}

\begin{scope}[xshift = -4cm]
  \draw (0,.5) -- (90:1cm) ;
  \draw (0,.5)  .. controls (180:0.7cm) and +(150: 0.2cm) .. (-30:1cm);
  \fill[white] (0, -0.06) circle (0.07);
  \draw (0,.5) .. controls (0:0.7) and  +(30:0.2cm) .. (210 :1cm);
\end{scope} 
\end{tikzpicture}}& \textrm{(Rv1)} \\
\NB{\begin{tikzpicture}[scale =0.7]
  \begin{scope}
  \draw (0,0) -- (90:1cm);
  \draw (0,0) -- (210:1cm);
  \draw (0,0) -- (-30:1cm);
  \fill[white] (90:0.5cm) circle (0.07);
  \draw (180:1cm)  .. controls +(0.3,0) and +(-0.3, 0) .. (90: 0.5) .. controls +(0.3,0) and +(-0.3, 0) .. (0:1cm);
\end{scope}
\node at (2, 0) {$\leftrightsquigarrow$};
\begin{scope}[xshift = 4cm]
  \draw (0,0) -- (90:1cm);
  \draw (0,0) -- (210:1cm);
  \draw (0,0) -- (-30:1cm);
  \fill[white] (-30:0.54) circle (0.07);
  \fill[white] (210:0.54) circle (0.07);
  \draw (180:1cm)  .. controls +(0.3,0) and +(-0.3, 0) .. (-90: 0.5) .. controls +(0.3,0) and +(-0.3, 0) .. (0:1cm);
\end{scope}

\begin{scope}[xshift=9cm]
\begin{scope}
  \draw (180:1cm)  .. controls +(0.3,0) and +(-0.3, 0) .. (90: 0.5) .. controls +(0.3,0) and +(-0.3, 0) .. (0:1cm);
  \fill[white] (90:0.5cm) circle (0.07);
  \draw (0,0) -- (90:1cm);
  \draw (0,0) -- (210:1cm);
  \draw (0,0) -- (-30:1cm);
\end{scope}
\node at (2, 0) {$\leftrightsquigarrow$};
\begin{scope}[xshift = 4cm]
  \draw (180:1cm)  .. controls +(0.3,0) and +(-0.3, 0) .. (-90: 0.5) .. controls +(0.3,0) and +(-0.3, 0) .. (0:1cm);
  \fill[white] (-30:0.54) circle (0.07);
  \fill[white] (210:0.54) circle (0.07);
  \draw (0,0) -- (90:1cm);
  \draw (0,0) -- (210:1cm);
  \draw (0,0) -- (-30:1cm);
\end{scope}
\end{scope} 
\end{tikzpicture}}&\textrm{(Rv2)} 
\end{array}
\end{align*}
\item One may wonder which trivalent graphs can be endowed with a structure of Klein graphs. It is known to be the case for planar graphs with no bridge (this follows from the $4$-color theorem) and for bipartite graphs (this follows from König's theorem). However, the 3-Hamiltonian condition is more complicated to ensure. Four examples are given in Figure~\ref{fig:examplhamilton}.
\begin{figure}[ht]
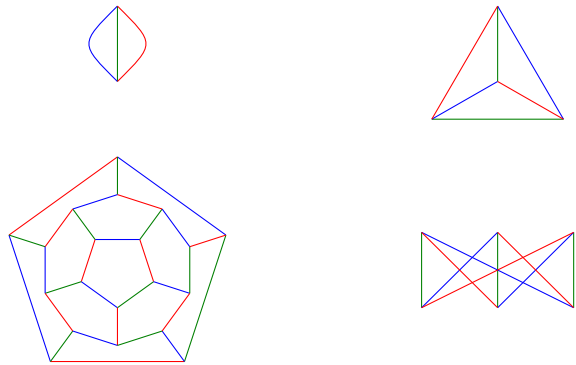

 \centering
  \begin{tikzpicture}
    \begin{scope}[xshift = 0cm, yshift = 2.5cm]
      \input{\imagesfolder/st_theta}
    \end{scope}
    \begin{scope}[xshift = 5cm, yshift= 2.5cm]
      \input{\imagesfolder/st_tetraedra}
    \end{scope}
    \begin{scope}[scale = 0.5, xshift = 0cm]
      \begin{scope}[yscale = {1}, xscale={1},decoration={markings, mark=at
  position 0.5 with {\arrow{>}}},postaction={decorate}]
\draw[color=blue] (18:3) -- (90:3);
\draw[color=red] (90:3) -- (162:3);
\draw[color=blue](162:3) -- ( 234:3);
\draw[color=red]( 234:3) -- (306:3);
\draw[color = green!50!black](306:3) -- (18:3);
\draw[color=red] (18:3) -- (18:2);
\draw[color=green!50!black] (90:3) -- (90:2);
\draw[color=green!50!black](162:3) -- (162:2);
\draw[color=green!50!black]( 234:3) -- (234:2);
\draw[color = blue](306:3) -- (306:2);
\draw[color=red] (-18:1) -- (54:1);
\draw[color=blue] (54:1) -- (126:1);
\draw[color=red](126:1) -- ( 198:1);
\draw[color=blue]( 198:1) -- (270:1);
\draw[color = green!50!black](270:1) -- (-18:1);
\draw[color=blue] (-18:1) -- (-18:2);
\draw[color=green!50!black] (54:1) -- (54:2);
\draw[color=green!50!black](126:1) -- (126:2);
\draw[color=green!50!black]( 198:1) -- ( 198:2);
\draw[color = red](270:1) -- (270:2);
\draw[color=green!50!black] (-18:2) -- (18:2);
\draw[color=blue] (18:2) -- (54:2);
\draw[color=red](54:2) -- (90:2);
\draw[color=blue]( 90:2) -- (126:2);
\draw[color = red](126:2) -- (162:2);
\draw[color=blue] (162:2) -- (198:2);
\draw[color=red] (198:2) -- (234:2);
\draw[color=blue](234:2) -- (270:2);
\draw[color=green!50!black]( 270:2) -- (306:2);
\draw[color = red](306:2) -- (-18:2);
\end{scope}
    \end{scope}
    \begin{scope}[xshift = 5cm, yshift = -0.5cm]
      \input{\imagesfolder/st_utilitygraph}
    \end{scope}
  \end{tikzpicture}
  \caption{Examples of 3-Hamiltonian Klein graphs.}
  \label{fig:examplhamilton}
\end{figure}
\item \label{item:connsum}The 3-Hamiltonian condition is preserved by performing connected sum along a vertex. This operation is described in Figure~\ref{fig:connsum}.
\end{enumerate}
\end{rmk}

  \begin{figure}[ht]
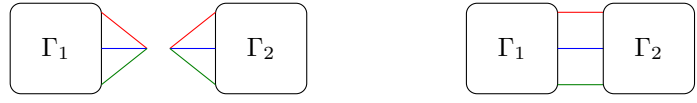

    \centering
    \NB{\tikz[scale=0.6]{\begin{scope}
  \begin{scope}
    \begin{scope}
      \draw[rounded corners] (-1,-1) -- (-1,1) -- (1,1) -- (1, -1) --cycle; 
      \draw[red] (1,0.8) -- (2,0); 
      \draw[blue] (1,0) -- (2,0); 
      \draw[green!50!black] (1,-0.8) -- (2,0); 
      \node at (0,0) {$\Gamma_1$};
    \end{scope}
    \begin{scope}[xshift = 4.5cm, xscale =-1]
      \draw[rounded corners] (-1,-1) -- (-1,1) -- (1,1) -- (1, -1) --cycle; 
      \draw[red]  (1,0.8) -- (2,0); 
      \draw[blue] (1,0)   --  (2,0); 
      \draw[green!50!black] (1,-0.8) -- (2,0); 
      \node at (0,0) {$\Gamma_2$};
    \end{scope}
  \end{scope}

  \begin{scope}[xshift=10cm]
    \begin{scope}
      \draw[rounded corners] (-1,-1) -- (-1,1) -- (1,1) -- (1, -1) --cycle; 
      \draw[red] (1,0.8) -- +(1,0); 
      \draw[blue] (1,0) -- +(1,0); 
      \draw[green!50!black] (1,-0.8) -- +(1,0); 
      \node at (0,0) {$\Gamma_1$};
    \end{scope}
    \begin{scope}[xshift = 3cm]
      \draw[rounded corners] (-1,-1) -- (-1,1) -- (1,1) -- (1, -1) --cycle;

      \node at (0,0) {$\Gamma_2$};
    \end{scope}
  \end{scope}

\end{scope}}} 
    \caption{Connected sum along a vertex.}
    \label{fig:connsum}
  \end{figure}

\begin{dfn}
  \label{dfn:Kleinfoam}
  A \emph{closed embedded Klein foam} $E$ is the realization of a finite CW-complex in a manifold of dimension 4 (if the manifold is not given, it is meant to be $\SS^4$) and some data attached to it. For every point of the CW complex, we require that there exists a neighborhood which is either diffeomorphic to a disk or to following picture (the singular one-dimensional cell is called a \emph{binding}):
\[
\NB{\begin{tikzpicture}
\begin{scope}

  \filldraw[fill=red,fill opacity = 0.5, draw= black, draw opacity=1] (0,0,0) rectangle (1,2,0);
\filldraw[fill=green,fill opacity = 0.5, draw= black, draw opacity=1] (0,0,0) -- (-0.7,0,0.5) -- (-0.7,2,0.5) -- (0,2,0);
\filldraw[fill=blue,fill opacity = 0.5, draw= black, draw opacity=1] (0,0,0) -- (0,2,0) -- (-0.7,2,-0.50) -- (-0.7,0,-0.50) -- cycle ;
\end{scope}
\end{tikzpicture}}.
\]  
The data attached to $E$ is a coloring of its facets by $\kg^*$, such that the three facets adjacent to a common binding have different colors. 

  The intersection $F$ of an embedded Klein foam $E$ with a sub-manifold of dimension 4 with boundary
is \emph{a Klein foam with boundary} if:
  \begin{itemize}
  \item the intersection of $E$ with $\partial W$ is a knotted Klein graph $\Gamma$ in $\partial W$,
  \item there exists a tubular neighborhood $U$ of $\partial W$ such that $(U, E \cap U)$ is diffeomorphic to $(\partial W \times ]0,1[, \Gamma \times ]0,1[)$.
  \end{itemize}
In this case, $\Gamma$ is the \emph{boundary of $F$} and we write $\Gamma= \partial F$. We say as well that $F$ is a \emph{spanning foam} for $\Gamma$.
\end{dfn}

\begin{prop}\label{prop:spanningfoamexists}[Proof in Appendix~\ref{sec:spanning-foams}]
  Let $\Gamma$ be a knotted Klein graph in $\SS^3$. There exists a spanning foam for $\Gamma$ in $\BB^4$.
\end{prop}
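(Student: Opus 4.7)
The plan is to realize the spanning foam as a movie cobordism from $\Gamma$ to the empty graph inside $\SS^3 \times [0,1]$, then collapse the top sphere to a point to identify the ambient with $\BB^4$ and obtain a properly embedded foam with $\partial F = \Gamma$.

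The main new ingredient beyond standard link cobordism is an \emph{unzip} move that eliminates pairs of vertices. Concretely, I would pick a vertex $v$ of $\Gamma$ and an edge $e$ at $v$, say of color $c$, with other endpoint $v'$. Since $\Gamma$ is a Klein graph, the other two edges at $v$ and at $v'$ are colored $a$ and $b$. The unzip along $e$ removes $v,v',e$ and joins the two free $a$-edges into one $a$-arc and the two free $b$-edges into one $b$-arc. In $\SS^3 \times [0,1]$ this is realized by the local model of Definition~\ref{dfn:Kleinfoam}: a single binding (a properly embedded arc with endpoints $(v,0)$ and $(v',0)$) with three facets, one of each color, meeting along it, traversed by a Morse critical point of the $t$-coordinate. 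One verifies that the unzip never produces a forbidden self-loop: if the merged $a$-edge were a loop at some remaining vertex $w$, then $w$ would already carry two $a$-edges in $\Gamma$, contradicting the Klein coloring axiom. Closed monochromatic circles may appear, but these are harmless, they are simply vertex-free components of the updated graph.

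Iterating is then straightforward. A trivalent multigraph has an even number of vertices (since $3|V| = 2|E|$), so after finitely many unzips performed at pairwise disjoint times and locations we reduce $\Gamma$ to a colored link $L \subset \SS^3$ whose components are each monochromatic. Such a link bounds an embedded (possibly non-orientable) surface in $\BB^4$: this is classical, following from $H_1(\BB^4) = 0$, and can be decomposed into a sequence of Reidemeister isotopies, saddle moves, and deaths of trivial circles, each contributing a single-colored local foam piece with no binding.

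Concatenating all these elementary cobordisms yields a Klein foam $F \subset \SS^3 \times [0,1]$ with $\partial F = \Gamma$ on $\SS^3 \times \{0\}$ and empty trace on $\SS^3 \times \{1\}$. Collapsing $\SS^3 \times \{1\}$ to a point identifies the slab with $\BB^4$ and extends $F$ trivially, producing the desired spanning foam. The real obstacle is not the combinatorics of the move sequence (which is essentially forced by the previous paragraphs) but arranging the elementary pieces disjointly in the slab and smoothing them at their junctions into a single embedded foam; I expect this to be handled by a standard general-position argument, with the combinatorial reduction above being the essential content of the proof.
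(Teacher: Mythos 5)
Your first stage (eliminating vertices in pairs by unzips, up to the framing/half-twist bookkeeping that the paper handles with the move Rv1) is essentially the paper's Lemma~\ref{lem:cob-gamma-2-link}, and that part is fine. The genuine gap is in the capping step. After the unzips you are left with a link $L=L_\aa\cup L_\bb\cup L_\cc$ whose components are monochromatic but which may still be \emph{linked across colors}, and you then claim it bounds a foam in $\BB^4$ built only from monochromatic pieces with no bindings (isotopies, monochromatic saddles, deaths). Such a foam is exactly a disjoint union of properly embedded surfaces $F_\aa, F_\bb, F_\cc$ in $\BB^4$ with $\partial F_i=L_i$: a facet carries a single color, so a saddle merging an $\aa$-circle with a $\bb$-circle is not allowed, and two facets of different colors may not intersect transversely (the only permitted singularity is a binding where three colors meet). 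But disjoint spanning surfaces in $\BB^4$ need not exist: if $K_\aa\subseteq L_\aa$ and $K_\bb\subseteq L_\bb$ have $\lk(K_\aa,K_\bb)$ odd, then any properly embedded (possibly non-orientable) surfaces they bound in $\BB^4$ must intersect, since the mod~$2$ intersection number of the surfaces equals $\lk(K_\aa,K_\bb)\bmod 2$ (the Hopf link is the basic example). Nothing in your reduction controls these cross-color linking numbers, and your concluding appeal to general position cannot help: a transverse intersection point of an $\aa$-facet with a $\bb$-facet is generic and stable, and it is simply not a permitted local model for a Klein foam.

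This is precisely where the paper inserts its second lemma (Lemma~\ref{lem:link-2-unlink}): before capping, it uses \emph{clasp} cobordisms --- foamy cobordisms that do contain bindings and realize a crossing change between strands of two different colors --- to unlink $L_\aa$, $L_\bb$ and $L_\cc$ from one another until they lie in three disjoint balls; only then are Seifert surfaces pushed into $\BB^4$, where they are automatically disjoint. So the missing idea in your proposal is exactly this clasp move (or some other mechanism trading bicolored crossings for singular foam pieces); without it the final capping claim is false in general, and the difficulty you identified (arranging elementary pieces at disjoint times in the slab) is the easy part, not the essential one.
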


\subsection{Klein covers}
\label{sec:klein-cover}
\begin{dfn}
  \label{dfn:kleinaction}
  Let $M$ be a closed oriented manifold. If $\kg$ acts on $M$ by
  positive diffeomorphisms and if for every $g$ in $\kg^*$, the set $M^g$ of
  fix points of $g$ is a sub-manifold of co-dimension 2, we say that
  $M$ is a \emph{Klein manifold}. 
 We set
  $M^{\cup\kg}:= \cup_{g\in \kg^*} M^g$.

\end{dfn}

\begin{prop}
  \label{prop:quotientismnf}
  Let $M$ be a Klein manifold of dimension 3 (resp. of dimension 4), then $M/\kg$ is a closed oriented manifold of the same dimension and $M^{\cup \kg}$ is mapped on a Klein graph (resp. on a Klein foam) by $\pi: M \twoheadrightarrow M/\kg$.
\end{prop}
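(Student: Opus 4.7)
The plan is to reduce to linear models via equivariant linearization and check each case, the main content being the analysis at points fixed by all of $\kg$.

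Averaging an arbitrary Riemannian metric on $M$ over the finite group $\kg$ makes it $\kg$-invariant, and the exponential map at $x\in M$ then provides an $H$-equivariant diffeomorphism from a neighborhood of $x$ onto a neighborhood of $0$ in $T_xM$ with the induced linear $H$-action, where $H$ denotes the stabilizer of $x$. Since a neighborhood of $\pi(x)$ in $M/\kg$ is identified with a neighborhood of $0$ in $T_xM/H$, and since $\kg$ has only the subgroups $\{1\}$, $\langle a\rangle$, $\langle b\rangle$, $\langle c\rangle$ and $\kg$ itself, it suffices to analyse four linear quotients.

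The trivial stabilizer case is immediate. For $H=\langle g\rangle$ the fixed space $T_xM^g$ is of codimension $2$ by hypothesis, and positivity of $g$ forces its action on the normal slice to be rotation by $\pi$; the fold $z\mapsto z^2$ identifies $\RR^2/\{\pm\Id\}$ with $\RR^2$, so $T_xM/H\cong\RR^n$ and the image of $M^g$ is a smooth codimension-$2$ submanifold. For $H=\kg$, decomposing $T_xM$ over the four real characters of $\kg$ and imposing $\dim(T_xM)^g=n-2$ for every $g\in\kg^*$ forces the representation to be $\chi_a\oplus\chi_b\oplus\chi_c$ in dimension $3$ and $\mathbf{1}\oplus\chi_a\oplus\chi_b\oplus\chi_c$ in dimension $4$, where $\chi_g$ denotes the non-trivial character with kernel $\langle g\rangle$. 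A direct determinant check confirms that these representations are orientation-preserving.

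The key technical step is to verify that $\RR^3/\kg$, for the action by diagonal sign changes on the three coordinate axes, is a topological $\RR^3$. I would pass to the link of the origin: an equivariant CW decomposition of $S^2$ (or a Riemann--Hurwitz count) gives $\chi(S^2/\kg)=2$, so $S^2/\kg$ is a topological $S^2$ and its cone is a topological $\RR^3$. In dimension $4$ the local model becomes $\RR\times\RR^3$ with $\kg$ acting trivially on the first factor, so $\RR^4/\kg\cong\RR^4$ by the same argument, and the three $2$-planes $\mathbf{1}\oplus\chi_g$ project to three distinctly coloured facets meeting along the image of $\mathbf{1}$, which is a $1$-dimensional binding. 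Because $\kg$ is abelian, each $M^g$ is $\kg$-invariant, so $\pi(M^{\cup\kg})$ inherits a well-defined coloring by the element of $\kg^*$ whose fixed set is being collapsed, and three different colors meet at every trivalent vertex, respectively every binding --- precisely the local model of a Klein graph, respectively a Klein foam.

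Globally, $M/\kg$ is compact and without boundary, and the orientation of $M$ descends to $M/\kg$ (smoothly away from the codimension-$2$ branch locus and extended uniquely across it, since every $g\in\kg$ acts as a positive diffeomorphism). The only genuinely non-formal ingredient is the identification $S^2/\kg\cong S^2$; once it is in hand, the rest follows from elementary linear algebra together with the standard formalism of finite-degree branched covers.
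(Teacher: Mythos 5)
Your proof is correct, and its overall skeleton is the paper's: linearize the action at a fixed point, run a case analysis over the possible stabilizers, and check that each linear quotient is a ball with the expected singular image. The one genuine difference is how the crucial local model at a fully fixed point is verified: the paper exhibits $\RR^3/\kg$ as a $3$-ball by explicitly gluing two cubes along the three faces containing the distinguished vertex (and then "multiplies by an interval" for dimension $4$), whereas you pass to the link of the origin and identify $S^2/\kg$ with $S^2$ by an Euler-characteristic count, then take the cone. Your character-theoretic decomposition of $T_xM$ also pins down the dimension-$4$ local model as $\mathbf{1}\oplus\chi_a\oplus\chi_b\oplus\chi_c$ directly from the codimension-$2$ hypothesis, which makes the "times an interval" step, the absence of worse foam singularities, and the orientation statement more explicit than in the paper; the paper's cube picture, on the other hand, is more elementary and gives the trivalent-vertex chart without any quotient-surface input.
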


\begin{proof}
  Dimension 3 and 4 are analogous. We only treat dimension 3.
  Let $g_1$ be an element of $\kg^*$ and $x$ in $M$ be a fixed point for $g_1$. The diffeomorphism induced by $g_1$ being an involution, the action of $g_1$ on $T_xM$ is diagonalizable and we can find a basis of $T_xM$ such that the matrices of the linear map induced by $g_1$ is equal to
\[
  \begin{pmatrix}
    1 & 0 &0 \\ 0 & -1 & 0 \\ 0 & 0 & -1
  \end{pmatrix}.
\]

Let us call $g_2$ and $g_3$ the two remaining elements of $\kg$. We have $g_2\cdot x  = g_2g_1 \cdot x = g_3 \cdot x$. Hence, if $x$ is fixed by $g_2$ if and only if it is fixed by $g_3$.

Suppose that $x$ is not fixed by $g_2$. Then the restriction of $\pi:M \to  M/\kg$ to a neighborhood of $\{ x, g_2\cdot x\}$ is isomorphic to a double branched cover followed by a trivial $2$-folds cover. 

\[
  \begin{tikzpicture}[scale =0.8]
    \begin{scope}[rotate = -90]
\begin{scope}[xshift = - 3.5cm, yshift=-1.5cm, rotate=90]
\draw (0,0) circle (1cm);
\draw (-1,0) arc (-180:0: 1cm and 0.3cm);
\draw[dotted] (-1,0) arc (180:0: 1cm and 0.3cm);
\draw[orange] (0, 0.8) -- (0, -0.8);
\draw [very thin] (0.05, 0.77) -- (-0.05, 0.83);
\draw [very thin] (-0.05, 0.77) -- (0.05, 0.83);
\draw [very thin] (0.05, -0.77) -- (-0.05, -0.83);
\draw [very thin] (-0.05, -0.77) -- (0.05, -0.83);
\draw [very thin] (0.05, -0.02) -- (-0.05, 0.02);
\draw [very thin] (-0.05, -0.02) -- (0.05, 0.02);
\node at (0.37, 0) {\tiny{$g_2\!\!\cdot\!\! x$}};
\end{scope}
\node at (-3.5,0) {$\cup$}; 
\node at (-3.5, 5) {$M$}; 
\begin{scope} [xshift = - 3.5cm, yshift= +1.5cm, rotate=90]
\draw (0,0) circle (1cm);
\draw (-1,0) arc (-180:0: 1cm and 0.3cm);
\draw[dotted] (-1,0) arc (180:0: 1cm and 0.3cm);
\draw[orange] (0, 0.8) -- (0, -0.8);
\draw [very thin] (0.05, 0.77) -- (-0.05, 0.83);
\draw [very thin] (-0.05, 0.77) -- (0.05, 0.83);
\draw [very thin] (0.05, -0.77) -- (-0.05, -0.83);
\draw [very thin] (-0.05, -0.77) -- (0.05, -0.83);
\draw [very thin] (0.05, -0.02) -- (-0.05, 0.02);
\draw [very thin] (-0.05, -0.02) -- (0.05, 0.02);
\node at (0.25, 0) {\tiny $x$};
\end{scope}
\node at (-1.75,0) {$\Big\downarrow$};
\node at (-1.75,-2.2) {\tiny{double branched cover}};
\begin{scope}[xshift = - 0cm, yshift=-1.5cm, rotate=90]
\draw (0,0) circle (1cm);
\draw (-1,0) arc (-180:0: 1cm and 0.3cm);
\draw[dotted] (-1,0) arc (180:0: 1cm and 0.3cm);
\draw[orange] (0, 0.8) -- (0, -0.8);
\draw [very thin] (0.05, 0.77) -- (-0.05, 0.83);
\draw [very thin] (-0.05, 0.77) -- (0.05, 0.83);
\draw [very thin] (0.05, -0.77) -- (-0.05, -0.83);
\draw [very thin] (-0.05, -0.77) -- (0.05, -0.83);
\end{scope}
\node at (0,0) {$\cup$};
\node at (0, 5) {$M/{g_1}$}; 
\begin{scope} [xshift =  0cm, yshift=1.5cm, rotate=90]
\draw (0,0) circle (1cm);
\draw (-1,0) arc (-180:0: 1cm and 0.3cm);
\draw[dotted] (-1,0) arc (180:0: 1cm and 0.3cm);
\draw[orange] (0, 0.8) -- (0, -0.8);
\draw [very thin] (0.05, 0.77) -- (-0.05, 0.83);
\draw [very thin] (-0.05, 0.77) -- (0.05, 0.83);
\draw [very thin] (0.05, -0.77) -- (-0.05, -0.83);
\draw [very thin] (-0.05, -0.77) -- (0.05, -0.83);
\end{scope}
\node at (1.75,0) {$\Big\downarrow$};
\node at (1.75,-2.2) {\tiny{trivial 2-folds cover}};
\begin{scope} [xshift = 3.5cm, yshift= 0cm, rotate=90]
\draw (0,0) circle (1cm);
\draw (-1,0) arc (-180:0: 1cm and 0.3cm);
\draw[dotted] (-1,0) arc (180:0: 1cm and 0.3cm);
\draw[orange] (0, 0.8) -- (0, -0.8);
\draw [very thin] (0.05, 0.77) -- (-0.05, 0.83);
\draw [very thin] (-0.05, 0.77) -- (0.05, 0.83);
\draw [very thin] (0.05, -0.77) -- (-0.05, -0.83);
\draw [very thin] (-0.05, -0.77) -- (0.05, -0.83);
\draw [very thin] (0.05, -0.02) -- (-0.05, 0.02);
\draw [very thin] (-0.05, -0.02) -- (0.05, 0.02);
\node at (0.37, 0) {\tiny $\pi(x)$};
\end{scope}
\node at (3.5, 5) {$M/\kg$}; 
\end{scope}
  \end{tikzpicture}
\]
This implies that $\pi(x)$ has a neighborhood homeomorphic to a ball. 

Suppose now that  $x$ is fixed by all the elements of $\kg^*$. We look at the action of $\kg$  over $T_xM$. This can be seen as a map from $\phi: \kg \to GL_3(\RR)$. The matrices $\phi(g_1), \phi(g_2)$ and $\phi(g_3)$ are simultaneously diagonalizable and we can find a basis of $T_xM$ such that these matrices are equal to:
\[
 \begin{pmatrix}
    1 & 0 &0 \\ 0 & -1 & 0 \\ 0 & 0 & -1
  \end{pmatrix}, \quad 
 \begin{pmatrix}
    -1 & 0 &0 \\ 0 & 1 & 0 \\ 0 & 0 & -1
  \end{pmatrix}\quad \textrm{and } \quad
 \begin{pmatrix}
    -1 & 0 &0 \\ 0 & -1 & 0 \\ 0 & 0 & 1
  \end{pmatrix}
\]
Hence, in a chart, the action of $g_1$, $g_2$ and $g_3$ is given by these matrices and the fixed point loci looks like
\[
  \begin{tikzpicture}[scale =1]
    \begin{scope}
  \draw[thick,red] (-1,0,0) -- (1,0,0) node[right] {$M^{a}$};
  \draw[thick, blue] (0,-1,0) -- (0,1,0) node[above] {$M^{b}$};
  \draw[thick, green!50!black] (0,0,-1) -- (0,0,1) node[below] {$M^{c}$};
  \fill (0,0,0) circle (0.5mm);
\draw(-1,-1,1) -- (-1,1,1) -- (1,1,1) -- (1,-1,1)-- cycle;
\draw[dotted]  (-1,-1, 1) -- +(0,0,-2);
\draw  (-1, 1, 1) -- +(0,0,-2);
\draw  ( 1, 1, 1) -- +(0,0,-2);
\draw  ( 1,-1, 1) -- +(0,0,-2);
\draw[dotted] ( 1,-1,-1) -- (-1,-1,-1) -- (-1, 1,-1);
\draw (-1,1,-1) -- ( 1, 1,-1) -- (1,-1,-1);
\draw[dashed] (-1, 1,0) -- +(2,0,0);
\draw[dashed] (-1,-1,0) -- +(2,0,0);
\draw[dashed] (-1,0, 1) -- +(2,0,0);
\draw[dashed] (-1,0,-1) -- +(2,0,0);

\draw[dashed] (1, 0,-1) -- +(0,0,2);
\draw[dashed] (-1,0,-1) -- +(0,0,2);
\draw[dashed] (0, 1,-1) -- +(0,0,2);
\draw[dashed] (0,-1,-1) -- +(0,0,2);

\draw[dashed] ( 1,-1, 0) -- +(0,2,0);
\draw[dashed] (-1,-1, 0) -- +(0,2,0);
\draw[dashed] ( 0,-1, 1) -- +(0,2,0);
\draw[dashed] ( 0,-1,-1) -- +(0,2,0);

\end{scope}
  \end{tikzpicture}
\]
in a neighborhood of $x$. This implies that a neighborhood of $\pi(x)$ in $M/\kg$ is given by gluing the two cubes
\[
  \begin{tikzpicture}[scale =1]
    \begin{scope}
  \draw[dotted] (0,0,0) -- (0,0,1);
  \draw (0,0,1) -- (0,1,1);
  \draw (0,1,1) -- (0,1,0) ;
  \draw[dotted] (0,1,0) -- (0,0,0);
  \draw (1,0,0) -- (1,0,1);
  \draw[blue] (1,0,1) -- (1,1,1);
  \draw[green!50!black] (1,1,1) -- (1,1,0); 
  \draw (1,1,0) -- (1,0,0);
  \draw[dotted] (0,0,0) -- +(1,0,0);
  \draw (0,1,0) -- +(1,0,0);
  \draw (0,0,1) -- +(1,0,0);
  \draw[red] (0,1,1) -- +(1,0,0);
  \fill (1,1,1) circle (0.04);
\end{scope}

\begin{scope} [xshift = 4cm, xscale =-1]
  \draw[dotted] (0,0,0) -- (0,0,1);
  \draw (0,0,1) -- (0,1,1);
  \draw (0,1,1) -- (0,1,0) ;
  \draw[dotted] (0,1,0) -- (0,0,0);
  \draw (1,0,0) -- (1,0,1);
  \draw[blue] (1,0,1) -- (1,1,1);
  \draw[green!50!black] (1,1,1) -- (1,1,0); 
  \draw (1,1,0) -- (1,0,0);
  \draw[dotted] (0,0,0) -- +(1,0,0);
  \draw (0,1,0) -- +(1,0,0);
  \draw (0,0,1) -- +(1,0,0);
  \draw[red] (0,1,1) -- +(1,0,0);
  \fill (1,1,1) circle (0.04);
\end{scope}
  \end{tikzpicture}
\]
along the three faces containing the distinguished vertex. This gives a 3-dimensional ball. This proves that $M/\kg$ is a manifold and that in a neighborhood of $\pi(x)$ the image of $\pi(M^{\cup \kg})$ in is given in a chart by:
\[
  \begin{tikzpicture}[scale =0.8]
    \begin{scope}
  \draw[red] (0,0) -- (90:1cm) node[above] {$\pi(M^{a})$};
  \draw[blue] (0,0) -- (210:1cm) node[left] {$\pi(M^{b})$};
  \draw[black!50!green] (0,0) -- (-30:1cm) node[right] {$\pi(M^{c})$};
\end{scope}
  \end{tikzpicture}
\]
Since $M$ is compact, there are finitely many points which are fixed by the whole group $\kg$. Altogether, this implies that $M/\kg$ is a manifold and that the image of $M^{\cup \kg}$ is a Klein graph. For the 4-dimensional statements, all local discussions have to be multiplied by an interval.
\end{proof}

\begin{dfn}
  \label{dfn:kleincover}
  Let $M$  be a closed manifold of dimension $3$ (resp. of dimension $4$) and $\Gamma$ (resp. $F$) be an embedded Klein graph (resp. a Klein foam). Suppose that $N$ is a Klein manifold of the same dimension such that $N/\kg \simeq M$ and that $\pi(N^{\cup \kg})$ is identified with $\Gamma$ (resp. $F$). Then we say that $N$ is a \emph{Klein cover of $M$ along $\Gamma$ (resp. along $F$)}. 
\end{dfn}

\begin{prop}
  \label{prop:kleincoverexists}
  \begin{enumerate}
  \item For any embedded Klein graph $\Gamma$ in $\SS^3$, there exists a unique (up to diffeomorphism) Klein cover of $\SS^3$ along $\Gamma$. It is denoted by $M_\Gamma$.
\item For any Klein foam $F$ embedded in $\SS^4$, there exists a unique (up to diffeomorphism) Klein cover of $\SS^4$ along $F$. It is denoted by $V_F$.
  \item For any Klein foam $F$ properly embedded in $\BB^4$, there exists a unique (up to diffeomorphism) Klein cover of $\BB^4$ along $F$. It is denoted by $W_F$. (The first point actually implies that $\partial W_F \simeq M_\partial F$).
  \end{enumerate}
\end{prop}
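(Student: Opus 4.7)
The plan is to construct the Klein cover in each case as the branched completion of a standard regular $\kg$-cover of the complement of the branch locus, where the defining homomorphism is dictated by the edge/facet coloring, and to invoke the local models already analyzed in the proof of Proposition~\ref{prop:quotientismnf} in order to fill in the branch locus smoothly.

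I treat in detail the case of $\Gamma \subset \SS^3$; the cases of a Klein foam $F$ in $\SS^4$ or $\BB^4$ are identical after multiplying each local model by an interval. Set $X = \SS^3\setminus \Gamma$ and define a map $\phi_\Gamma \co \pi_1(X) \to \kg$ by sending the meridian of an edge colored by $g$ to $g\in \kg^*$. The only non-trivial check is at the trivalent vertices: if $m_1,m_2,m_3$ are the meridians of the three edges incident to a vertex, one has the local relation $m_1 m_2 m_3 = 1$ in $\pi_1(X)$, while the three corresponding colors are a permutation of $a,b,c$, so $\phi_\Gamma(m_1 m_2 m_3) = abc = c\cdot c = 1$ in $\kg$. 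The map $\phi_\Gamma$ is surjective since all three elements of $\kg^*$ appear as images of meridians, and it therefore defines a regular $4$-fold cover $\widetilde X \to X$ whose deck transformation group is $\kg$. To produce $M_\Gamma$ I complete $\widetilde X$ along the branch locus using the standard models: near the interior of an edge of color $g$ the cover is the usual double branched cover for the subgroup $\langle g \rangle$ times a trivial double cover for $\kg/\langle g\rangle$, while near a vertex the local model is exactly the triple-axis $\kg$-quotient described in the proof of Proposition~\ref{prop:quotientismnf}. Each local piece is a smooth oriented $3$-manifold on which $\kg$ acts by orientation-preserving involutions (the three matrices $\mathrm{diag}(\pm1,\pm1,\pm1)$ with exactly two negative entries have determinant $+1$), and the fixed loci project onto the edges with the correct colors. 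Gluing these local pieces to $\widetilde X$ yields the required closed oriented Klein manifold with $M_\Gamma/\kg \simeq \SS^3$ and $\pi(M_\Gamma^{\cup\kg})$ identified with $\Gamma$ as a colored graph.

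For uniqueness, let $N$ be any Klein cover of $\SS^3$ along $\Gamma$. Restricting to the complement of $N^{\cup\kg}$ gives a regular $\kg$-cover of $X$, hence a surjection $\psi\co \pi_1(X) \to \kg$. The monodromy of a meridian of an edge colored $g$ is an order-two element of $\kg$ that fixes the preimage of that edge, and the hypothesis that $\pi$ identifies $\pi(N^{\cup\kg})$ with $\Gamma$ as a colored graph forces this element to be exactly $g$; thus $\psi = \phi_\Gamma$, and the unramified parts of $N$ and $M_\Gamma$ are $\kg$-equivariantly diffeomorphic. Since the branched completion at a codimension-$2$ fixed locus with prescribed monodromy is unique under the smoothness requirement, $N \simeq M_\Gamma$. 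In the foam cases, $\pi_1$ of the complement is generated by facet meridians subject to one relation $m_1m_2m_3=1$ per binding, and the three facets meeting along a binding carry three different colors of $\kg^*$ by Definition~\ref{dfn:Kleinfoam}, so the construction and uniqueness go through verbatim; in the properly embedded case, restricting the whole construction to the boundary $\SS^3$ immediately yields $\partial W_F \simeq M_{\partial F}$. The only potentially delicate step is the smoothness of the branched completion at vertices of $\Gamma$ and at bindings of $F$, but this local analysis has already been carried out in Proposition~\ref{prop:quotientismnf} and is simply used here in reverse.
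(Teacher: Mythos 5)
Your proof follows essentially the same route as the paper: define the coloring-induced morphism $\pi_1(\SS^3\setminus\Gamma)\to\kg$ via meridians, take the associated $\kg$-cover of the complement, complete it along the branch locus using the local models from Proposition~\ref{prop:quotientismnf}, and get uniqueness from the uniqueness of that covering together with the rigidity of the local models. The only cosmetic difference is that the paper first passes through $H_1$ (noting the orientation ambiguity is harmless since every element of $\kg$ has order two) whereas you work directly with $\pi_1$; your surjectivity remark is not needed for the argument and is the only slightly superfluous step.
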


\begin{proof}
  We only prove the first statement, the two others are analogous. Fix an arbitrary orientation on $\Gamma = (V,E)$. The first homology group of $M:=\SS^3 \setminus \Gamma$ is generated by elements $[\gamma_e]_{e\ \in E}$, where $\gamma_e$ is a small loop wrapping positively around the edge $e$. More precisely 
\[ H_1(M, \ZZ)\simeq \left.\left( \bigoplus_{z \in E}\ZZ \right) \right/  \left\langle  \pm[\gamma_{e_1(v)}] + \pm[\gamma_{e_2(v)}] + \pm[\gamma_{e_3(v)}] =0 | \textrm{for all $v$ in $V$}\right\rangle \]
where $e_1(v), e_2(v)$ and $e_3(v)$ are the three edges adjacent to $v$ and the signs ambiguity is given by the way the orientations are toward $v$. The coloring of $\Gamma$ gives a morphism
\[
\phi:H_1(M) \to \kg
\]
which sends $[\gamma_e]$ to the color of $e$ in $\Gamma$. Note here that the ambiguity in the orientation of $\Gamma$ is not a problem since all non-trivial elements of $\kg$ have order $2$. This is indeed a morphism, since in $\kg$ we have $g_1g_2g_3 =1$. 
We can promote $\phi$ to a morphism:
\[\phi : \pi_1(M) \to \kg.\]
We consider the covering $M'$ associated with this morphism. We can complete $M'$ in order to make it a Klein cover of $\SS^3$. For the edges and the vertices we use the local models described in the proof of Proposition~\ref{prop:quotientismnf}.

Uniqueness comes from the uniqueness of the covering $M'$ and from the fact that the local models for the singular points and the edges are the only possible ones (see proof of \ref{prop:quotientismnf}).
The proof in dimension 4 is analogous. Note that in this case, the local models are given by the ones we described times an interval. 

\end{proof}

\section{A signature invariant}
\label{sec:signature-invariant}

\subsection{Normal Euler numbers}
\label{sec:normal-euler-numbers-1}
The aim of this part is to recall the definition of normal Euler numbers of surfaces with boundaries (Definition~\ref{dfn:QHSeulernumbers}) and to give an additivity property that they satisfy. They have been studied by Gilmer~\cite{MR1238876} and require linking numbers of rationally null-homologous knots in arbitrary 3-manifolds. We refer to the lecture notes of Lescop \cite[Section 1.5]{MR3586621} for details about such linking numbers.
We start with the normal Euler number of a closed surface:
\begin{dfn}
  \label{dfn:NENclosedembeddedsurface}

  Let $\Sigma$ be a smooth, not necessarily orientable, closed surface immersed in a smooth oriented manifold of dimension 4 and let $s$ be a section of the normal bundle of $\Sigma$ transverse to the null section $s_0$. Then the intersection (computed with local orientations) of $s_0$ with $s$ is called the \emph{normal Euler number of $\Sigma$} and is denoted by $e(\Sigma)$. As the definition suggests the integer $e(\Sigma)$ does not depends on $s$.
\end{dfn}
\begin{rmk}
  \label{rmk:eulerclass}
  If we choose a local system of orientations $O(\Sigma)$ of $\Sigma$, we can define the Euler class  $e(N_\Sigma)$ of the normal bundle $N_\Sigma$. This is an element of $H^2(\Sigma, O(\Sigma))$. We consider $[\Sigma] \in H_2(\Sigma, O(\Sigma))$, the fundamental class of $\Sigma$ in this local system of orientations. The normal Euler number of $\Sigma$ is then equal to $e(N_\Sigma)([\Sigma])$.
\end{rmk}

If the $4$-manifold  and the surface has boundary, we can extend this definition relatively to the boundary.

\begin{dfn}[Relative normal Euler number]
  \label{dfn:relative_euler_number}
  Let $W$ be an oriented $4$-manifold with boundary and $\Sigma$ be a smooth, properly immersed surface with boundary in $W$. 
Let $L= l_1\cup \dots \cup l_k$ be the boundary of $\Sigma$. Let us choose $\tilde{l}_1, \dots, \tilde{l}_k$ some parallels of $l_1, \dots, l_k$ in $\partial W$. The normal Euler number of $\Sigma$ relatively to $\tilde{l}_1, \dots, \tilde{l}_k$ is the intersection number of $\Sigma$ with a section $s$  of the normal bundle of $\Sigma$ (transverse to the null section $s_0$), such that $\partial s= \tilde{l}_1\cup \dots\cup \tilde{l}_k$. We denote it by $e(\Sigma; \tilde{l}_1,\dots, \tilde{l}_k)$.
\end{dfn}

\begin{prop}[{\cite[p. 311]{MR1238876}}]
  \label{prop:diff-relative-euler-numbers}
  Let $W$ be an oriented $4$-manifold with boundary and $\Sigma$ be a smooth, properly immersed surface with boundary. 
Let $L= l_1\cup \dots \cup l_k$ be the boundary of $\Sigma$. Suppose that each $l_i$ is rationally null-homologous. 
Let us choose $\tilde{l}^1_1, \dots, \tilde{l}^1_k$  and $\tilde{l}^2_1, \dots, \tilde{l}^2_k$ two sets of parallels of $l_1, \dots, l_k$. 
For each $i$ pick an orientation of $l_i$ and orient $\tilde{l}^1_i$ and $\tilde{l}^2_i$ accordingly.
Then for every $i$ in $[1, l]$, $\mathrm{lk}(l_i, \tilde{l}^1_i) - \mathrm{lk}(l_i, \tilde{l}^2_i)$ is an integer, and we have:
\[
e(\Sigma; \tilde{l}^2_1,\dots, \tilde{l}^2_k) - 
e(\Sigma; \tilde{l}^1_1,\dots, \tilde{l}^1_k) = \sum_{i=1}^k (\mathrm{lk}(l_i, \tilde{l}^2_i) - \mathrm{lk}(l_i, \tilde{l}^1_i)).
\]
\end{prop}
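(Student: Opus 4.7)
The plan is to handle the two assertions separately. The integrality of $\lk(l_i, \tilde{l}_i^2) - \lk(l_i, \tilde{l}_i^1)$ follows from a direct computation on the boundary torus of a tubular neighborhood, while the formula for the change in the relative normal Euler number is obtained by localizing near $\partial \Sigma$ and counting twists.

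For the first assertion, I view both push-offs $\tilde{l}_i^1$ and $\tilde{l}_i^2$ as sitting on the torus $\partial \nu(l_i)$, where $\nu(l_i)$ is a tubular neighborhood of $l_i$ in $\partial W$. On this torus both curves lie in the longitudinal class of $l_i$ (coherently oriented with $l_i$), so their difference as $1$-cycles equals $n_i \cdot m_i$ for some integer $n_i$, where $m_i$ denotes a meridian of $l_i$. Since $l_i$ is rationally null-homologous, there exists a rational $2$-chain $C$ in $\partial W$ with $\partial C = q\, l_i$ for some integer $q > 0$. By definition of the linking number in this setting,
\[
\lk(l_i, \tilde{l}_i^2) - \lk(l_i, \tilde{l}_i^1) = \tfrac{1}{q}\bigl(C \cdot \tilde{l}_i^2 - C \cdot \tilde{l}_i^1\bigr) = \tfrac{n_i}{q}\, (C \cdot m_i) = n_i,
\]
since $C \cdot m_i = q$.

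For the second assertion I reduce to changing one push-off at a time, say $\tilde{l}_i^1 \to \tilde{l}_i^2$ with all other parallels fixed. Choose sections $s_1, s_2$ of the normal bundle $N_\Sigma$, transverse to the zero section $s_0$, with $\partial s_j = \bigcup_k \tilde{l}_k^j$, and arranged so that $s_1$ and $s_2$ agree outside a collar neighborhood $l_i \times [0, \epsilon)$ of $l_i$ in $\Sigma$. In this collar, $N_\Sigma$ trivializes as $l_i \times [0, \epsilon) \times D^2$. Passing from the boundary condition $\tilde{l}_i^1$ to $\tilde{l}_i^2 = \tilde{l}_i^1 + n_i m_i$ amounts to adding $n_i$ twists to the boundary of $s_1$ around the $D^2$-fiber; each such twist creates exactly one additional transverse intersection with $s_0$, all of the same local sign fixed by the ambient orientation. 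Hence the contribution of $l_i$ to $e(\Sigma; \tilde{l}^2) - e(\Sigma; \tilde{l}^1)$ equals $n_i$, and summing over $i$ yields the claimed identity.

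The most delicate point is that $\Sigma$ need not be orientable, so the intersection numbers defining $e$ must be computed via local orientations of $\Sigma$; consistency is ensured by the fact that flipping a local orientation of $\Sigma$ simultaneously flips the induced orientation of $N_\Sigma$, leaving each local intersection sign invariant. A secondary technical point is that the trivialization of $N_\Sigma$ along $\partial \Sigma$ can be chosen compatibly with both push-offs, which follows because the disk bundle of a circle in an oriented $3$-manifold is trivial.
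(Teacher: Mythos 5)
Your argument is correct and takes essentially the same approach as the paper: the paper likewise localizes the change of section in a collar of each $l_i$ (gluing in a homotopy/twisting section over $l_i\times[-\epsilon,\epsilon]$) and identifies the resulting extra intersections with the zero section with the difference of linking numbers. The only genuine addition on your side is the explicit verification, via a rational $2$-chain with $\partial C = q\,l_i$, that $\mathrm{lk}(l_i,\tilde{l}_i^2)-\mathrm{lk}(l_i,\tilde{l}_i^1)$ is an integer, a point the paper leaves implicit.
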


\begin{proof}
  Let $T_i$ be a tubular neighborhood of $l_i$ in $\partial W$. An homotopy from $\tilde{l_i}^1$ to $\tilde{l_i}^2$ can be though of a section $s_{h_i}$ of the normal bundle of $l_i\times [-\epsilon, \epsilon]$ in $T_i\times [-\epsilon,
 \epsilon]$. The intersection of $s_{h_i}(l_i\times [-\epsilon, \epsilon])$ with $l_i\times [-\epsilon,\epsilon]$ is equal to $ \mathrm{lk}(l_i, \tilde{l}^1_i) - \mathrm{lk}(l_i, \tilde{l}^2_i)$.
Let $s_1$ be a section of the normal bundle of $\Sigma$ used to compute $e(\Sigma; \tilde{l}^1_1,\dots, \tilde{l}^1_k)$. Gluing the $s_{h_i}$'s to $s_1$, one obtains a section $s_2$ which bounds $\tilde{l}^1_1\cup\dots \cup \tilde{l}^1_k$. 
Hence: \[\Sigma\cap s_2(\Sigma) - \Sigma \cap s_1(\Sigma) = \sum_{i=1}^k (\mathrm{lk}(l_i, \tilde{l}^2_i) - \mathrm{lk}(l_i, \tilde{l}^1_i)). \]
\end{proof}
This yields the following definition:
\begin{dfn}[Normal Euler numbers, boundary case]
  \label{dfn:QHSeulernumbers}
  Let $W$ be an oriented $4$-manifold with boundary and $\Sigma$ be a smooth surface with boundary properly immersed. 
    Suppose that every component $l_1, \dots, l_k$ of the boundary of $\Sigma$ is rationally null-homologous, and let $s$ be a section of the normal bundle of $\Sigma$ (transverse to $s_0$) and denote $l'_1, \dots, l'_k$ the parallels of $l_1, \dots, l_k$ induced by $s$. We define $e(\Sigma) = \Sigma \cap s(\Sigma) - \sum_{i=1}^k \mathrm{lk}(l_i, l'_i)$.
\end{dfn}
\begin{rmk} \label{rmk:NENwithlk}
  \begin{enumerate}
  \item    \label{prop:eulerlinking}
Suppose that every  component of $\Sigma$ has a non-empty boundary. Then it is possible to find a nowhere vanishing section $s$ of the normal bundle of $\Sigma$. 
Let us denote $l'_1, \dots, l'_k$ the parallels of $l_1,\dots, l_k$ induced by $s$. 
Then we have: 
\[e(\Sigma) =  -\sum_{i=1}^k \mathrm{lk}(l_i, l'_i).\]
\item If $\partial W$ is a rational homology sphere, then the conditions on the $l_i$'s are automatically satisfied.
\end{enumerate}
\end{rmk}
\begin{prop}
  \label{prop:en-change-orientation-and-glue}
  Let $W$ be an oriented $4$-manifold with boundary and $\Sigma$ be a smooth surface with boundary properly immersed in $W$.
Suppose that each connected component of $\partial \Sigma$ is rationally null-homologous.  \begin{itemize}
  \item The normal Euler number $e(\Sigma, -W)$ of  $\Sigma$ in the manifold $W$ endowed with the opposite orientation is equal to $-e(\Sigma,W)$.
  \item Let $W'$ be an oriented $4$-manifold with boundary and $\Sigma'$ be a smooth surface with boundary immersed in $W'$ such that $\partial \Sigma' \subseteq \partial W'$. Suppose that $\phi$ is an orientation-reversing diffeomorphism from $\partial W$ to $\partial W'$ which maps $\partial \Sigma$ on $\partial \Sigma'$. Then $e(\Sigma \cup_\phi \Sigma', W\cup_\phi W')= e(\Sigma, W) + e(\Sigma', W')$. Note that $\Sigma \cup_\phi \Sigma'$ is a closed surface in a closed oriented manifold.
  \end{itemize}
\end{prop}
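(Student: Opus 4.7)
For assertion (1), both terms in
\[
e(\Sigma, W) = \Sigma \cap s(\Sigma) - \sum_{i=1}^k \lk(l_i, l'_i)
\]
from Definition~\ref{dfn:QHSeulernumbers} change sign when the orientation of $W$ is reversed. The intersection number $\Sigma\cap s(\Sigma)$ is computed from the orientation of $T_xW$ at each transverse intersection point (using local orientations of $\Sigma$), so reversing the orientation of $W$ reverses each local intersection sign; equivalently, as in Remark~\ref{rmk:eulerclass}, the Euler class of $N_\Sigma$ flips because the orientation of $N_\Sigma$ is induced from those of $T_xW$ and $T_x\Sigma$. In parallel, reversing $W$ reverses the induced orientation on $\partial W$, and the linking number of two rationally null-homologous curves in a 3-manifold changes sign under a global orientation reversal of the ambient manifold. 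Both contributions flip, yielding $e(\Sigma,-W)=-e(\Sigma,W)$.

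For assertion (2), the plan is first to choose auxiliary parallels $\tilde l_1,\dots,\tilde l_k$ of $l_1,\dots,l_k$ in $\partial W$ and set $\tilde l'_i := \phi(\tilde l_i)$, which are parallels of $l'_i := \phi(l_i)$ in $\partial W'$ and match across the gluing locus. Pick a section $s$ of the normal bundle of $\Sigma$ computing the relative Euler number $e(\Sigma;\tilde l_1,\dots,\tilde l_k)$ of Definition~\ref{dfn:relative_euler_number}, and a section $s'$ computing $e(\Sigma';\tilde l'_1,\dots,\tilde l'_k)$, arranged so that $\phi$ intertwines $s$ and $s'$ along a collar of the common boundary. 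Then $s$ and $s'$ glue to a section $\sigma$ of the normal bundle of the closed surface $\Sigma\cup_\phi\Sigma'$ in $W\cup_\phi W'$. After pushing all intersection points off the collar, the intersection count splits additively:
\[
e(\Sigma\cup_\phi\Sigma') = e(\Sigma;\tilde l_1,\dots,\tilde l_k) + e(\Sigma';\tilde l'_1,\dots,\tilde l'_k).
\]

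To finish, Definition~\ref{dfn:QHSeulernumbers} gives $e(\Sigma;\tilde l_1,\dots,\tilde l_k) = e(\Sigma,W) + \sum_i \lk(l_i,\tilde l_i)$ with linkings taken in $\partial W$, and analogously for $\Sigma'$. Since $\phi$ is orientation-reversing and linking numbers reverse sign under ambient orientation reversal, one has $\lk(l'_i,\tilde l'_i)_{\partial W'} = -\lk(l_i,\tilde l_i)_{\partial W}$, so the two linking corrections cancel in the sum $e(\Sigma,W)+e(\Sigma',W')$, producing the claimed equality. The main subtlety to verify carefully is the matching of the sections along the collar — a routine homotopy extension, with Proposition~\ref{prop:diff-relative-euler-numbers} controlling how the relative Euler number depends on the choice of parallels — together with the precise sign behaviour of linking under the orientation-reversing $\phi$; once these are pinned down, the rest is bookkeeping.
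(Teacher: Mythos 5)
Your proof is correct and takes essentially the same route as the paper's: for the gluing statement the paper likewise chooses parallels $\tilde l_i$, sections $s$ and $s'$ bounding $\tilde l_i$ and $\phi(\tilde l_i)$, glues them to a section of the closed surface, splits the intersection count, and cancels the two linking-number corrections using that $\phi$ reverses orientation. Your first bullet is just the explicit sign check that the paper dismisses as following directly from the definition.
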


\begin{proof}
  The first assertion directly follows from the definition. For the second one, we choose a set of parallels $\tilde{l}_1,\dots, \tilde{l}_k$ for $\partial \Sigma$ and a section $s$ of the normal bundle of $\Sigma$ bounding them. We choose a section $s'$ of the normal bundle of $\Sigma'$ bounding $\phi(\tilde{l}_1), \dots, \phi(\tilde{l}_k)$. We can glue $s$ and $s'$ along $\phi$, this gives a section $s''$ of the normal bundle of $\Sigma'\cup_\phi \Sigma$. Hence we have:
\begin{align*}
e(\Sigma'\cup_\phi \Sigma) &= (\Sigma'\cup_\phi \Sigma)\cap s''(\Sigma'\cup_\phi \Sigma) \\
&=\Sigma \cap s(\Sigma) + \Sigma' \cap s'(\Sigma') \\
&=\Sigma \cap s(\Sigma) - \sum_{i=1}^k\mathrm{lk}_{\partial W}(l_i, \tilde{l}_i) + \sum_{i=1}^k\mathrm{lk}_{\partial W}(l_i, \tilde{l}_i) + \Sigma' \cap s(\Sigma') \\
&=\Sigma \cap s(\Sigma) - \sum_{i=1}^k\mathrm{lk}_{\partial W}(l_i, \tilde{l}_i) - \sum_{i=1}^k\mathrm{lk}_{\partial W'}(\phi(l_i), \phi(\tilde{l}_i)) + \Sigma' \cap s(\Sigma') \\
&=e(\Sigma, W) + e(\Sigma', W').
\end{align*}
\end{proof}
\subsection{The invariants}
\label{sec:invariants}
We start by introducing some notations which will be used throughout the rest of the paper.
\begin{notation}
Let $F$ be a  Klein foam with boundary properly embedded in $\BB^4$. 
Recall that we have a Klein cover $\pi:W_F \rightarrow W_F/\kg \simeq \BB^4$. 
For every element $i$ in $\kg^*$, we denote by: 
\begin{itemize}
 \item $\widehat{F}_i$ the fixed-points surface of the diffeomorphism $i$ in $W_F$, 
\item $W_{F}/i$ the manifold $W_F/{\left< i \right>}$ (where $\left< i \right>$ is the subgroup of order two generated by $i$),
\item $\widetilde{F}_i^i$ the image of $\widehat{F}_i$ in $W_{F}/i$,
\item $\widetilde{F}_{jk}^i$ the image of $\widehat{F}_j\cup\widehat{F}_k$ 
in $W_{F}/i$ where $j$ and $k$ denote the two other elements of $\kg^*$,
\item $F_{jk}$ the image of $\widehat{F}_j\cup\widehat{F}_k$ 
in $\BB^4$.
\end{itemize} These notations are summarized below in Figure~\ref{fig:notations}.
\end{notation}
Note that $\widetilde{F}_i^i$ and $\widetilde{F}_{jk}^i$ are properly embedded surfaces in $W_{F}/i$
and that $F_{jk}$ is a properly embedded surface in $\BB^4$: 
 in fact this is the union of the facets of $F$ colored by $j$ and $k$.
Moreover the  Klein cover $\pi$ is the composition of two double
branched covers
$W_F \rightarrow W_{F}/i \rightarrow \BB^4$. 
The last one can be seen as the double branched cover of $\BB^4$ along $F_{jk}$ 
and the first one as a double branched cover of $W_{F}/i$ along $\widetilde{F}_i^i$.
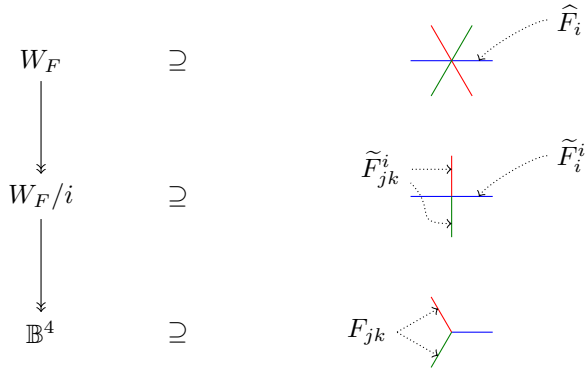
\begin{figure}[ht]
  \centering
\begin{tikzpicture}[scale =1.8]
    \node (B4) at (0,0) {$\BB^4$};
    \node (TW2) at (0,1) {$W_{F}/i$};
    \node (HW) at (0,2) {$W_F$};
    \node (B42) at (1,0) {$\supseteq$};
    \node (HW2) at (1,2) {$\supseteq$};
    \node (TW22) at (1,1) {$\supseteq$};
    \draw[->>] (HW) -- (TW2);
    \draw[->>] (TW2) -- (B4);
    \begin{scope}[xshift =3cm]
      \begin{scope}[yshift= 2cm]
        \draw[blue] (0:0.3) -- (180:0.3);
        \draw[red] (120:0.3) -- (-60:0.3);
        \draw[green!50!black] (60:0.3) -- (240:0.3);
        \draw[<-, densely dotted] (0:0.2) .. controls +(60:0.1) and +(180:0.1).. +(0.5,0.3) node[right] {$\widehat{F_i}$};
      \end{scope}
      \begin{scope}[yshift= 1cm]
        \draw[blue] (0:0.3) -- (180:0.3);
        \draw[red] (90:0.3) -- (60:0);
        \draw[green!50!black] (270:0.3) -- (240:0);
        \draw[<-, densely dotted] (0:0.2) .. controls +(60:0.1) and +(180:0.1).. +(0.5,0.3) node[right] {$\widetilde{F}^i_i$};
        \draw[<-, densely dotted] (90:0.2)-- (-0.3, 0.2) node[left] {$\widetilde{F}^i_{jk}$};
        \draw[<-, densely dotted] (-90:0.2) ..controls +(-0.3,0) and +(0.2, -0.2).. (-0.3, 0.1);
      \end{scope}
      \begin{scope}[yshift= 0cm]
        \draw[blue] (0:0.3) -- (0,0);
        \draw[red] (120:0.3) -- (0,0);
        \draw[green!50!black] (240:0.3) -- (0,0);        
        \draw[<-, densely dotted] (120:0.2)-- (-0.4, 0) node[left] {${F}_{jk}$};
        \draw[<-, densely dotted] (240:0.2) -- (-0.4, 0);
      \end{scope}
    \end{scope}
\end{tikzpicture}  
  \caption{Decomposition of a Klein cover in two double branched covers}
  \label{fig:notations}
\end{figure}
Denoting by $a$, $b$ and $c$ the three elements of $\kg^*$, 
the following diagram explains the three decompositions of the Klein cover 
of $\BB^4$ on $F$ in double branched covers:
\[
\begin{tikzpicture}
    \node (HW) at (0,2) {$W_F$};
    \node (TW1) at (-2,1) {$W_{F}/\aa$};
    \node (TW2) at (0,1) {$W_{F}/\bb$};
    \node (TW3) at (2,1) {$W_{F}/\cc$};
    \node (B4) at (0,0) {$\BB^4$};
    \draw[->>] (HW) -- (TW1);
    \draw[->>] (HW) -- (TW2);
    \draw[->>] (HW) -- (TW3);
    \draw[->>] (TW1) -- (B4);
    \draw[->>] (TW2) -- (B4);
    \draw[->>] (TW3) -- (B4);
\end{tikzpicture}
\]
Moreover we will use the same system of notations:
\begin{itemize}
 \item for a Klein foam without boundary $E$ embedded in $\SS^4$,
 \item for a Klein graph $\Gamma$ embedded in $\SS^3$.
\end{itemize}
In this last case, the Klein cover $M_\Gamma$ has the following decomposition in 
double branched covers:
\[
\begin{tikzpicture}
    \node (HW) at (0,2) {$M_\Gamma$};
    \node (TW1) at (-2,1) {$M_{\Gamma}/\aa$};
    \node (TW2) at (0,1) {$M_{\Gamma}/\bb$};
    \node (TW3) at (2,1) {$M_{\Gamma}/\cc$};
    \node (B4) at (0,0) {$\SS^3$};
    \draw[->>] (HW) -- (TW1);
    \draw[->>] (HW) -- (TW2);
    \draw[->>] (HW) -- (TW3);
    \draw[->>] (TW1) -- (B4);
    \draw[->>] (TW2) -- (B4);
    \draw[->>] (TW3) -- (B4);
\end{tikzpicture}
\]

\begin{rmk}
\label{knotsinagraph}
\begin{enumerate}
\item The sub-graphs $\Gamma_{\aa\bb}$, $\Gamma_{\bb\cc}$ and $\Gamma_{\cc\aa}$ are links in $\SS^3$. For $\{i,j,k\} = \{\aa,\bb,\cc\}$,  $\widetilde{\Gamma}_{jk}^i$ and $\widetilde{\Gamma}_{i}^i$ are links in $M_\Gamma/i$ and $\widehat{\Gamma}_{\aa}$, $\widehat{\Gamma}_{\bb}$ and $\widehat{\Gamma}_{\cc}$ are links in $M_{\Gamma}$.
\item If $\Gamma$ is 3-Hamiltonian, $\Gamma_{\aa\bb}$, $\Gamma_{\bb\cc}$ and $\Gamma_{\cc\aa}$ are knots. In this case $M_{\Gamma}/\aa$, $M_{\Gamma}/\bb$ and $M_{\Gamma}/\cc$ are rational homology spheres whose first homology group has no $2$-torsion (see for instance \cite[p. 213]{MR1277811}).  
\end{enumerate}
\end{rmk}
Finally, we define normal Euler numbers for Klein foams as follows.
\begin{dfn}
  \label{dfn:eulernumberfoams}
  Let $F$ be a Klein foam with boundary in $(\BB^4, \SS^3)$. The \emph{weak normal Euler number} $e(F)$ of $F$ is given by the following formula:
\[e(F) = e(F_{\aa \bb}) + e(F_{\bb \cc}) + e(F_{\cc \aa}). \]
If $\partial F$ is 3-Hamiltonian, we define the \emph{strong normal Euler number} $\widetilde{e}(F)$ of $F$ by:
\[\widetilde{e}(F)=  e(\widetilde{F}_{\aa}^{\aa}) +e(\widetilde{F}_{\bb}^{\bb})+e(\widetilde{F}_{\cc}^{\cc}). \]  
\end{dfn}

We are now able to state the main theorem and to define our invariants. 
Recall that for a knotted Klein graph $\Gamma$ in $\SS^3$ 
there exists a spanning foam for $\Gamma$ in $\BB^4$ (Proposition \ref{prop:spanningfoamexists}).

\begin{thm}
  \label{thm:main}

Let $\Gamma$ be a knotted 3-Hamiltonian Klein graph in $\SS^3$. 
Let $F$ be a spanning foam for $\Gamma$ in $\BB^4$. We denote the signature of the 4-manifold $W_F$ by $\sigma(W_F)$. 
\begin{itemize}
\item The integer $\sigma(\Gamma):= \sigma(W_F) + \frac12 e(F)$,
\item the rational  $\widetilde{\sigma}(\Gamma) := \sigma(W_F) +\frac12 \widetilde{e}(F)$,
\item the rational $\delta(\Gamma) := \frac12 \widetilde{e}(F) - \frac12 e(F)$
\item and the rationals:
  \begin{align*}
\delta_{\aa \bb}(\Gamma)
 &:=\frac14e(\widetilde{F}^{\aa}_{\aa})+\frac14e(\widetilde{F}^{\bb}_{\bb}) -\frac12e(F_{\aa \bb}),\\
\delta_{\bb \cc}(\Gamma)
 &:=\frac14e(\widetilde{F}^{\bb}_{\bb})+\frac14e(\widetilde{F}^{\aa}_{\aa}) -\frac12e(F_{\bb \cc}),\\    
\delta_{\cc \aa}(\Gamma) 
 &:=\frac14e(\widetilde{F}^{\cc}_{\cc})+\frac14e(\widetilde{F}^{\aa}_{\aa}) -\frac12e(F_{\cc \aa})    
  \end{align*}
\end{itemize}
only depend on the knotted Klein graph $\Gamma$. These quantities are called \emph{signature invariants of $\Gamma$}. 
\end{thm}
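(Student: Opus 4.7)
The plan follows the Gordon--Litherland strategy \cite{GL78}: compare two spanning foams by gluing them into a closed Klein foam. Let $F_1$ and $F_2$ be two spanning foams for $\Gamma$, and form the closed Klein foam $E := F_1 \cup_\Gamma (-F_2)$ embedded in $\SS^4 = \BB^4 \cup_{\SS^3} (-\BB^4)$. By the uniqueness in Proposition~\ref{prop:kleincoverexists}, its Klein cover is $V_E = W_{F_1} \cup_{M_\Gamma} (-W_{F_2})$. Novikov additivity gives $\sigma(V_E) = \sigma(W_{F_1}) - \sigma(W_{F_2})$, and Proposition~\ref{prop:en-change-orientation-and-glue}, applied to each of the auxiliary surfaces $F_{ij}$ and $\widetilde{F}^i_i$ inside the respective covers, gives the corresponding additivity of their normal Euler numbers under the gluing.

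Consequently, the invariance of the six listed quantities reduces to the following three identities, valid for any closed Klein foam $E \subset \SS^4$:
\[
\sigma(V_E) = -\tfrac{1}{2}e(E), \qquad \sigma(V_E) = -\tfrac{1}{2}\widetilde{e}(E), \qquad \tfrac{1}{2}\bigl(e(\widetilde{E}^i_i) + e(\widetilde{E}^j_j)\bigr) = e(E_{ij})
\]
for every pair $\{i,j\}\subset \kg^*$. To establish these, I would exploit the three decompositions $V_E \twoheadrightarrow V_E/i \twoheadrightarrow \SS^4$ of the Klein cover into iterated double branched covers: the lower map is branched along $E_{jk}$ and the upper along $\widetilde{E}^i_i$ (where $\{i,j,k\} = \kg^*$). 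The classical formula $\sigma(\widetilde{X}) = 2\sigma(X) - \tfrac{1}{2}e(\Sigma)$ for double branched covers of closed oriented 4-manifolds (a special case of the G-signature theorem \cite{MR0236952}), applied twice in each such decomposition, yields
\[
\sigma(V_E) = -e(E_{jk}) - \tfrac{1}{2}e(\widetilde{E}^i_i) \qquad \text{for each } i\in \kg^*.
\]
Summing the three resulting equations gives $3\sigma(V_E) = -e(E) - \tfrac{1}{2}\widetilde{e}(E)$. Applied directly to the full $\kg$-action on $V_E$, with $V_E/\kg = \SS^4$ of zero signature, the G-signature theorem independently gives $\sigma(V_E) = -\tfrac{1}{2}\widetilde{e}(E)$. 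Comparing the two forces $e(E) = \widetilde{e}(E) = -2\sigma(V_E)$, and subtracting two of the three displayed equations produces the third identity. The integrality of $\sigma(\Gamma)$ stated in the theorem then follows by applying the original Gordon--Litherland formula to each sub-knot $\Gamma_{ij}$, which shows that $e(F_{ij})$, and hence $e(F)$, is even.

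The main obstacle is not the signature computation itself but the boundary bookkeeping when passing from the closed to the bordered case. The normal Euler numbers $e(\widetilde{F}^i_i)$ and $e(F_{ij})$ are \emph{relative} in the sense of Definition~\ref{dfn:QHSeulernumbers}: they depend on a choice of parallels of rationally null-homologous boundary curves in $M_\Gamma/i$ and $\SS^3$. The 3-Hamiltonian hypothesis is precisely what makes these boundary curves rationally null-homologous (Remark~\ref{knotsinagraph}(2)). To actually apply Proposition~\ref{prop:en-change-orientation-and-glue} to each of the auxiliary surfaces, one must verify that compatible parallels can be chosen on $W_{F_1}$ and $W_{F_2}$ along the common boundary $M_\Gamma$; this matching, together with the two technical lemmas announced in Section~\ref{sec:normal-euler-number}, is what is expected to reduce the relative statement to the closed case treated above.
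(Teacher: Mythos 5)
Your argument is correct, and its frame — glue two spanning foams into a closed Klein foam $E\subset\SS^4$, use Novikov additivity and Proposition~\ref{prop:en-change-orientation-and-glue} (whose hypotheses hold precisely because the 3-Hamiltonian condition makes $\widetilde{\Gamma}^i_i$ rationally null-homologous in the rational homology sphere $M_\Gamma/i$), and reduce everything to identities for closed foams — is the same as the paper's. Where you genuinely diverge is in how those identities are established, and which ones. The paper stays upstairs in $V_E$: the transfer formula (Proposition~\ref{prop:transfert}) plus the $G$-signature theorem (Theorem~\ref{thm:Gsignature}) give $-\sigma(V_E)=e(\widehat{E}_a)+e(\widehat{E}_b)+e(\widehat{E}_c)$; Lemma~\ref{lem:kauffman} converts this into $\tfrac12\widetilde{e}(E)$, while for the weak Euler number the non-embedded union $\widehat{E}_i\cup\widehat{E}_j$ has to be pushed down through the branched covers, which is exactly what the quasi-transversality Lemma~\ref{lem:ENalmosttransverse} is for; and the invariance of the $\delta_{ij}$ is not proved there at all but deferred to Proposition~\ref{prop:weaksignaturebicolor}, via Gordon--Litherland, Gilmer's $\xi$ and Lemma~\ref{lem:sigmasigma}. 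You instead apply the closed double-branched-cover signature formula twice along each tower $V_E\to V_E/i\to\SS^4$, obtaining $\sigma(V_E)=-e(E_{jk})-\tfrac12 e(\widetilde{E}^i_i)$ for each $i$, and extract all three closed-foam identities — including the pairwise one $\tfrac12\bigl(e(\widetilde{E}^i_i)+e(\widetilde{E}^j_j)\bigr)=e(E_{ij})$, which kills the $\delta_{ij}$ differences directly — by linear algebra. This buys a proof of the $\delta_{ij}$-invariance inside the main theorem that needs neither Lemma~\ref{lem:ENalmosttransverse} nor the detour through $\xi$; the costs are that your second input $\sigma(V_E)=-\tfrac12\widetilde{e}(E)$ is not really independent (it is the paper's transfer-plus-$G$-signature computation, including the conversion $e(\widehat{E}_i)=\tfrac12 e(\widetilde{E}^i_i)$ from Lemma~\ref{lem:kauffman}), that you must accept $E_{jk}$, creased along the bindings, as a smoothable branch surface for the classical formula — the same convention the paper uses when it applies Gordon--Litherland to $F_{ab}$ — and that you lose the relations with knot and link signatures which the paper's organization yields as a byproduct. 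Two details: the compatible-parallels issue you flag at the end is already resolved inside the proof of Proposition~\ref{prop:en-change-orientation-and-glue}, so nothing remains to check there beyond the rational null-homology you already invoke; and your pairwise identity corresponds to the intended reading $\delta_{ij}=\tfrac14 e(\widetilde{F}^i_i)+\tfrac14 e(\widetilde{F}^j_j)-\tfrac12 e(F_{ij})$ (the printed formulas for $\delta_{bc}$ and $\delta_{ca}$ in the statement repeat $\widetilde{F}^{a}_{a}$, evidently a typo), which is what Proposition~\ref{prop:weaksignaturebicolor} and the example use, and your integrality remark for $\sigma(\Gamma)$ reproduces the first identity of that proposition.
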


Of course, we have the following relations between these invariants:
\begin{align*}
  \delta(\Gamma) = \widetilde{\sigma}(\Gamma) -\sigma(\Gamma) \quad \textrm{and} \quad 
  \delta(\Gamma) = \delta_{\aa \bb}(\Gamma) + \delta_{\bb  \cc}(\Gamma) + \delta_{\cc \aa}(\Gamma).
\end{align*}
Before relating our invariants with signatures of knots and links, let us recall the following definition due to Gilmer~\cite{MR1145914, MR1238876}:

\begin{dfn}
  \label{dfn:xiinM}
  Let $M$ be a rational homology sphere with a first homology group of
  odd order and $L$ a (unoriented) link in $M$. We can find an
  oriented four dimensional manifold $W$ and a surface $F$ such that
  the pair $(W,F)$ bounds a $r$ copies of $(M,L)$ for a positive
  integer $r$. Then the \emph{signature $\xi(L)$} of $L$ is defined by
  the following formula:
  \[
\xi(L) = \frac{1}{r}\left( \sigma(W_F) - 2 \sigma(W) +\frac{1}{2} e(F)  \right)
\]
where $W_F$ denotes the double branched cover of $W$ along
$F$.
\end{dfn}

\begin{rmk}
  \label{rmk:xi}
  This definition is given in \cite{MR1145914} for an oriented link
  and in a more general setting where $\xi$ depends on a choice of a
  branched covering of $M$ along $L$. With our assumptions, such a
  choice is unique and $\xi$ does not depend on the orientation of $L$
  (see \cite[p. 295]{MR1238876}). If $M$ is
  $\SS^3$ this definition agrees with the signature of an unoriented
  link given by Murasugi~\cite{Murasugi}.
\end{rmk}

\begin{prop}
  \label{prop:weaksignaturebicolor} 
  Let $\Gamma$ be a knotted 3-Hamiltonian Klein graph in $\SS^3$. The
  following relations holds:

    \begin{align*}
      \sigma(\Gamma) &= \sigma(\Gamma_{\aa \bb}) + \sigma(\Gamma_{\bb
      \cc}) +\sigma(\Gamma_{\cc\aa}), \\
      \widetilde{\sigma}(\Gamma) &=
      \xi(\widetilde{\Gamma}^{\aa}_{\aa}) +
      \xi(\widetilde{\Gamma}^{\bb}_{\bb})
      +\xi(\widetilde{\Gamma}^{\cc}_{\cc}), \\
      \delta_{ij}(\Gamma)
  &=\frac12\xi(\widetilde{\Gamma}^{i}_i)+\frac12\xi(\widetilde{\Gamma}^{j}_j)
      -\sigma (\Gamma_{ij}) \quad \textrm{for all $i\neq j$ in
      $\kg^*$}
    \end{align*}
    Note that $\widetilde{\Gamma}^{\aa}_{\aa}$,
    $\widetilde{\Gamma}^{\bb}_{\bb}$ and $\widetilde{\Gamma}^{\cc}_{\cc}$
    are \emph{links} in rational homology spheres.
    In these formulas $\sigma(K)$ denotes the signature of a knot $K$ in $\SS^3$.
\end{prop}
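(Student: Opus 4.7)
The proof I have in mind exploits the tower $W_F \twoheadrightarrow W_F/i \twoheadrightarrow \BB^4$: for $\{i,j,k\}=\kg^*$, the second arrow is the double branched cover of $\BB^4$ along the properly embedded surface $F_{jk}$, which spans the knot $\Gamma_{jk}$ in $\SS^3$, while the first arrow is the double branched cover of $W_F/i$ along $\widetilde{F}^i_i$. Applying the Gordon--Litherland formula to $(W_F/i, F_{jk})$ gives the first building block
\[
\sigma(\Gamma_{jk}) = \sigma(W_F/i) + \tfrac12 e(F_{jk}).
\]
Since $M_\Gamma/i$ is a rational homology sphere with $H_1$ of odd order (Remark~\ref{knotsinagraph}) and $\partial(W_F/i)=M_\Gamma/i$, Gilmer's $\xi$-invariant can be evaluated on the bounding pair $(W_F/i,\widetilde{F}^i_i)$ with $r=1$, and the relevant double branched cover is precisely $W_F$. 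This yields
\[
\xi(\widetilde{\Gamma}^i_i) = \sigma(W_F) - 2\sigma(W_F/i) + \tfrac12 e(\widetilde{F}^i_i).
\]

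The engine of the proof is the additivity
\[
\sigma(W_F) = \sigma(W_F/\aa)+\sigma(W_F/\bb)+\sigma(W_F/\cc),
\]
which I expect to be the main obstacle. To obtain it I would apply the $G$-signature theorem (or the elementary approach cited in the introduction) to the $\kg$-action on $W_F$: writing $\sigma_g$ for the $g$-equivariant signature, the formulas $\sigma(W_F/\kg)=\tfrac14(\sigma(W_F)+\sigma_\aa+\sigma_\bb+\sigma_\cc)$ and $\sigma(W_F/\langle i\rangle)=\tfrac12(\sigma(W_F)+\sigma_i)$ for $i\in\kg^*$, combined with $\sigma(W_F/\kg)=\sigma(\BB^4)=0$, give $\sigma_\aa+\sigma_\bb+\sigma_\cc = -\sigma(W_F)$; summing the three intermediate formulas then yields the claimed identity.

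With these two ingredients, the three formulas become bookkeeping. Summing the Gordon--Litherland identity over the three choices of $i$ produces $\sigma(\Gamma_{\aa\bb})+\sigma(\Gamma_{\bb\cc})+\sigma(\Gamma_{\cc\aa}) = \sigma(W_F)+\tfrac12 e(F) = \sigma(\Gamma)$. Summing the expression for $\xi(\widetilde{\Gamma}^i_i)$ over the same three choices yields $3\sigma(W_F) - 2\sigma(W_F) + \tfrac12\widetilde{e}(F) = \widetilde{\sigma}(\Gamma)$. For $\delta_{\aa\bb}$, I would add the $\xi$-expressions for $i=\aa$ and $i=\bb$, use the additivity to replace $\sigma(W_F)-\sigma(W_F/\aa)-\sigma(W_F/\bb)$ by $\sigma(W_F/\cc)$, and then rewrite $\sigma(W_F/\cc)$ via Gordon--Litherland in terms of $\sigma(\Gamma_{\aa\bb})$ and $e(F_{\aa\bb})$; collecting the remaining Euler numbers gives exactly $\delta_{\aa\bb}(\Gamma)$, and the other two cases are analogous.
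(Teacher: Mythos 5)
Your proposal is correct and follows essentially the same route as the paper: the Gordon--Litherland identity $\sigma(\Gamma_{jk})=\sigma(W_F/i)+\tfrac12 e(F_{jk})$, Gilmer's definition applied with $r=1$ to $(W_F/i,\widetilde{F}^i_i)$, and the additivity $\sigma(W_F)=\sigma(W_F/\aa)+\sigma(W_F/\bb)+\sigma(W_F/\cc)$ obtained exactly as in Lemma~\ref{lem:sigmasigma} via the transfer formulas of Proposition~\ref{prop:transfert} (your formulas are the transfer argument itself; the $G$-signature theorem is not needed here), followed by the same bookkeeping for the three identities.
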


The proofs of Theorem~\ref{thm:main} and Proposition~\ref{prop:weaksignaturebicolor} are postponed to Section~\ref{sec:proof-main-theorem}.

\begin{rmk}\label{rmk:homologydoublecover}
\begin{enumerate}
\item 
In some cases, this proposition enables to compute signature of the knots $\widetilde{\Gamma_i^i}$ by taking advantage of some symmetries. See Section~\ref{sec:an-example} for an example.
\item The invariants $\sigma$, $\widetilde{\sigma}$ and $\delta_{ij}$ are additive with respect to connected sum along a vertex (see Remark~\ref{rmk:graphs}(\ref{item:connsum})). For $\sigma$, it follows from the additivity of the signature of knots with respect to connected sum. For the invariants $\delta_{ij}$, it follows from the additivity of the normal Euler numbers of surfaces. From this, one easily deduces the additivity of $\widetilde{\sigma}$.
\end{enumerate}
\end{rmk}

The proofs of the results of this section require some properties on normal Euler numbers and coverings which are described in the next section.

\subsection{Normal Euler numbers and double branched covers}
\label{sec:normal-euler-number}
In this part, we explain how normal Euler numbers behave when considering double branched coverings. We consider three different situations. The first two appear in  Lemma \ref{lem:kauffman} and are classical. The third one is quite specific to our situation. It is given in Lemma~\ref{lem:ENalmosttransverse}.

\begin{lem}
\label{lem:kauffman}
\begin{enumerate}
\item\label{it:kauf1} Let $\Sigma_1$ be a surface embedded in a $4$-manifold $W$ and
  $\widetilde{W}$ be the double branched cover of $W$ along
  $\Sigma_1$. We define $\widetilde{\Sigma_1}$ to be the pre-image of
  $\Sigma_1$ in $\widetilde{W}$. We have:
  \[e(\Sigma_1) = 2e(\widetilde{\Sigma_1}).\] 
  \item\label{it:kauf2} Suppose that another
  surface $\Sigma_2$ intersects $\Sigma_1$ transversely. We denote by
  $\widetilde{\Sigma_2}$ the pre-image of $\Sigma_2$ in
  $\widetilde{W}$. We have:
  \[e(\Sigma_2) = \frac{1}{2}e(\widetilde{\Sigma_2}).\]
\end{enumerate}
\end{lem}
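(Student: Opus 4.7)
The plan is to reduce both statements to the standard local model of a double branched cover near its branch surface. In a tubular neighborhood of $\Sigma_1$, the projection $\pi\co \widetilde{W}\to W$ is modeled on the disk bundle of $N_{\Sigma_1}$ equipped with complex squaring on each fibre: in local coordinates $(x_1,x_2,y_1,y_2)$ with $\Sigma_1=\{y_1=y_2=0\}$ and $y=y_1+iy_2$, one has $\widetilde{W}=\{w^2=y\}$ and $\pi(w)=w^2$. Consequently $\pi$ restricts to a homeomorphism $\widetilde{\Sigma_1}\to\Sigma_1$, and the fibrewise squaring provides a canonical identification of oriented rank-$2$ normal bundles $N_{\Sigma_1}\cong N_{\widetilde{\Sigma_1}}^{\otimes 2}$ (using twisted orientations in the sense of Remark~\ref{rmk:eulerclass} if $\Sigma_1$ is non-orientable).

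For (\ref{it:kauf1}), the relation $N_{\Sigma_1}\cong N_{\widetilde{\Sigma_1}}^{\otimes 2}$ doubles the Euler class, so $e(\Sigma_1)=2e(\widetilde{\Sigma_1})$. Equivalently, pick a section $\tilde{s}$ of $N_{\widetilde{\Sigma_1}}$ transverse to the zero section; the squared section $\tilde{s}^{\otimes 2}$ descends to a section of $N_{\Sigma_1}$ vanishing to order two at each zero of $\tilde{s}$ with the same local sign, so that after a small generic perturbation it has exactly $2e(\widetilde{\Sigma_1})$ signed transverse zeros.

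For (\ref{it:kauf2}), transversality of $\Sigma_2$ to $\Sigma_1$ means that at an intersection point the tangent plane of $\Sigma_2$ coincides with the fibre of $N_{\Sigma_1}$. In the local coordinates above, writing $\Sigma_2=\{x=0\}$ with $x=x_1+ix_2$, the preimage $\widetilde{\Sigma_2}$ is the smooth disk $\{x=0,\ w^2=y\}$, and the restriction of $\pi$ to it is the map $w\mapsto w^2$ onto $\Sigma_2$ with branching at the intersection point; its normal directions are the $x$-directions, matching those of $\Sigma_2$. Globally this shows that $\pi|_{\widetilde{\Sigma_2}}$ is a double cover of $\Sigma_2$ branched along $\Sigma_1\cap\Sigma_2$ and that $N_{\widetilde{\Sigma_2}}\cong \pi^*N_{\Sigma_2}$. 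Pulling back a transverse section of $N_{\Sigma_2}$ then produces a section of $N_{\widetilde{\Sigma_2}}$ with each signed zero appearing twice (once per sheet), yielding $e(\widetilde{\Sigma_2})=2e(\Sigma_2)$.

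The main subtlety I expect is the case of surfaces with boundary: the normal Euler number is then relative to a choice of parallels, and the bundle identifications must be arranged compatibly with such parallels on $\partial \widetilde{W}$ lifted from those on $\partial W$ through the restricted double branched cover. This reduces to checking that the linking-number correction in Definition~\ref{dfn:QHSeulernumbers} transforms with the expected factor ($2$ or $1/2$), which follows from the corresponding scaling of intersection and linking numbers under the cover.
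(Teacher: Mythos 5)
Your argument is correct and follows essentially the same route as the paper's proof: work in the local model $(z_1,z_2)\mapsto(z_1,z_2^2)$ near the branch surface, transfer a generic section between $\widetilde{W}$ and $W$, and count zeros/intersection points with multiplicity $2$ in case (1) and via the $2$-to-$1$ sheet correspondence in case (2); your tensor-square phrasing $N_{\Sigma_1}\cong N_{\widetilde{\Sigma_1}}^{\otimes 2}$ is just a bundle-level restatement of the paper's pushed-forward section $\{(z_1,z_1^2)\}$. The only point to make explicit (the paper does, and your ``once per sheet'' tacitly assumes it) is that in (2) the section of $N_{\Sigma_2}$ is chosen so that its zeros avoid $\Sigma_1\cap\Sigma_2$, which a generic choice guarantees.
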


 \begin{proof}
(\ref{it:kauf1}) We denote by $\pi: \widetilde{W} \to W$ the canonical projection.
We choose a section $\widetilde{s}$ of the normal bundle of $\widetilde{\Sigma_1}$ transverse to the trivial section $\widetilde{s}_0$. The section $\widetilde{s}$ induces via $\pi$ a section $s$ of the normal bundle of $\Sigma$. This section $s$ is not transverse to the normal section $s_0$ of $\Sigma$. 
However there are only finitely many intersection points. In a neighborhood of every intersection point, the double branched cover can be written in an appropriate chart as:
\[ 
  \begin{array}{lrcl}
    \pi \colon& \BB^4:=  \{(z_1, z_2)\in \CC^2|\, |z_1| \leq 1 |z_2|\leq 1\} & \to  & \BB^4 \\
        & (z_1, z_2) &\mapsto& (z_1, z_2^2) 
  \end{array}
\]
with $\widetilde{\Sigma_1} = \{(z_1, 0)|\, |z_1|\leq 1 \}$ and\footnote{When working in a chart, we identify a  section of the normal bundle of a surface with its image in the ambient 4-manifold by an appropriate exponential map. In particular the null section is identified with the surface itself.} $\widetilde{s}(\widetilde{\Sigma_1})= \{(z_1, z_1)| \, |z_1| \leq 1\}$. Locally the intersection number of $\widetilde{\Sigma_1}$ with $\widetilde{s}$ is $\pm1$, the sign depending on the orientation of $\BB^4$. With this setting, $\Sigma_1= \{(z_1, 0)|\, |z_1|\leq 1 \}$ and $s=\{(z_1, z_1^2)| \, |z_1| \leq 1\}$. The local intersection number of $\Sigma_1$ with $s$ is $\pm2$, with the same sign. We obtain the result by summing over all intersection points. 

(\ref{it:kauf2}) We consider $s$ a section of the normal bundle of $\Sigma_2$ transverse to the null section. We may assume that $\Sigma_2\cap s(\Sigma_2)$ and $\Sigma_1$ are disjoint. The section $s$ induces a section $\widetilde{s}$ of the normal bundle of  $\widetilde{\Sigma_2}$. The section $\widetilde{s}$ is transverse to the null section. Moreover the map $\pi$ induces a 2 to 1 correspondence between  $\widetilde{\Sigma_2}\cap \widetilde{s(\Sigma_2)}$ and $\Sigma_2\cap s(\Sigma_2)$ which respects the intersection signs.
\end{proof}

\begin{dfn}
  \label{dfn:quasi-transverse}
  Let $\Sigma$ and $\Sigma'$ be two surfaces embedded in a 4-manifold $W$. We say that $\Sigma$ and $\Sigma'$ intersect \emph{quasi-transversely} if for all $x$ in $\Sigma\cap \Sigma'$, there exists a neighborhood $U$ of $x$ in $W$ such that:
  \[
(U, \Sigma \cap U, \Sigma'\cap U, x) \simeq (\RR^4, \RR^2\times \{(0,0)\}, \RR\times \{0\} \times \RR \times \{0\}, \{(0,0,0,0)\}).
\]
\end{dfn}

\begin{lem}
  \label{lem:ENalmosttransverse} 
 Let $\Sigma$ be a surface embedded in an oriented $4$-manifold $W$. Let $\widetilde{W}$ a double branched cover of $W$ along $\Sigma$ and  $\widetilde{\Sigma}$ the pre-image of $\Sigma$ in $\widetilde{W}$.
Suppose that a surface $\Sigma'$ in $W$  intersects $\Sigma$ quasi-transversely. Let $\widetilde{\Sigma'}$ be the pre-image of $\Sigma'$ in $\widetilde{W}$. We have:
\[
e(\Sigma') = \frac12 e(\widetilde{\Sigma'})
\]
Note that the surface $\widetilde{\Sigma'}$ is not embedded. 
\end{lem}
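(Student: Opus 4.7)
The approach parallels Lemma~\ref{lem:kauffman}(\ref{it:kauf2}) but requires extra care at the preimage of $\Sigma\cap\Sigma'$, where $\widetilde{\Sigma'}$ ceases to be embedded. First I would pin down the local structure of the preimage. In the coordinates of Definition~\ref{dfn:quasi-transverse}, model the double branched cover by
\[ \pi\colon(x_1,x_2,\tilde u,\tilde v)\longmapsto (x_1,x_2,\tilde u^2-\tilde v^2,2\tilde u\tilde v). \]
Then $\pi^{-1}(\Sigma')$ is the union of two planes $P_1=\{x_2=\tilde u=0\}$ and $P_2=\{x_2=\tilde v=0\}$ meeting along $L=\{x_2=\tilde u=\tilde v=0\}$, which projects onto $\Sigma\cap\Sigma'$. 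Thus $\widetilde{\Sigma'}$ is the image of a canonical immersion $\iota\colon\hat\Sigma'\to\widetilde W$ of a smooth surface $\hat\Sigma'$ (locally $P_1\sqcup P_2$), and $e(\widetilde{\Sigma'})$ is to be understood as the Euler number of the normal bundle $N_\iota$ of this immersion evaluated on $[\hat\Sigma']$.

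Next I would choose a section $s$ of $N_{\Sigma'}$ used to compute $e(\Sigma')$ with the following properties: $s$ is transverse to the zero section and its zeros avoid $\Sigma\cap\Sigma'$; moreover, along a tubular neighbourhood of $\Sigma\cap\Sigma'$ in $\Sigma'$ the values of $s$ lie in the rank-one subbundle of $N_{\Sigma'}\big|_{\Sigma\cap\Sigma'}$ induced by $T\Sigma$ (the $\partial_{x_2}$-direction in the chart). Such a section can be constructed by extending this subbundle to a rank-one subbundle of $N_{\Sigma'}$ over a tubular neighbourhood and patching with an arbitrary transverse section elsewhere. I would then lift $s$ to a section $\tilde s$ of $N_\iota$ by imposing $d\pi(\tilde s)=s\circ\pi$ away from $\iota^{-1}(\Sigma)$, where $\pi$ is a local diffeomorphism. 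The key observation is that if $s=s_1(x_1,u)\,\partial_{x_2}$ near $\Sigma\cap\Sigma'$, then on $P_2$ the lift is $\tilde s=s_1(x_1,\tilde u^2)\,\partial_{x_2}$, which is smooth across $L$; the situation on $P_1$ is symmetric. So $\tilde s$ extends to a smooth section over all of $\hat\Sigma'$, nowhere zero in a neighbourhood of $\iota^{-1}(\Sigma)$.

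Finally I would count zeros. Every zero of $\tilde s$ lies in $\hat\Sigma'\setminus\iota^{-1}(\Sigma)$ and projects to a zero of $s$; conversely, each zero of $s$ lifts to exactly two zeros of $\tilde s$ with matching local signs, since $\pi$ is an orientation-preserving unbranched double cover off $\Sigma$. This gives $e(\widetilde{\Sigma'})=2e(\Sigma')$, which is the stated equality.

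The main obstacle is producing a smooth lift $\tilde s$ at $\iota^{-1}(\Sigma\cap\Sigma')$: for a generic section, $d\pi$ degenerates on the $\partial_{\tilde v}$-component of $N_{P_2}$ along $L$ and the naive lift acquires a $1/\tilde u$ pole; the constraint placed on $s$ in the second paragraph is precisely what kills this pole, and fixing this constraint is the only non-routine ingredient of the argument.
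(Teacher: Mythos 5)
Your overall strategy is essentially the paper's: force the section, near $\Sigma\cap\Sigma'$, to point in the direction of $T\Sigma$ modulo $T(\Sigma\cap\Sigma')$ (the $\partial_{x_2}$-direction), lift it smoothly, and match the remaining zeros two-to-one as in part~(\ref{it:kauf2}) of Lemma~\ref{lem:kauffman}. But there is a genuine gap exactly at the constraint you single out as the key ingredient. The rank-one subbundle of $N_{\Sigma'}|_{\Sigma\cap\Sigma'}$ induced by $T\Sigma$ is canonically the normal line bundle of the intersection circle $C$ inside $\Sigma$. Your construction needs a nowhere-vanishing section of this line bundle along each circle $C$: a zero of $s$ taking values in the line subbundle is not transverse to the zero section of the rank-two bundle $N_{\Sigma'}$, and it would moreover sit in the region where your lifting argument requires $\tilde s$ to be nowhere zero. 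Such a nowhere-vanishing section exists if and only if the line bundle is trivial, i.e. if and only if $C$ is two-sided in $\Sigma$. Nothing in the hypotheses guarantees this: $\Sigma$ is not assumed orientable, and in the intended application $\Sigma=\widetilde{F}^{i}_{i}$ comes from an unoriented foam, where an intersection circle is one-sided whenever the monodromy of the Klein cover around the corresponding binding circle is a nontrivial deck transformation. In that situation the constrained section cannot be closed up around $C$, and the construction in your second paragraph breaks down.

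This is precisely the ``gluing problem'' the paper's proof confronts: it builds the ($\tau$-invariant) section along each intersection circle starting from a base point, and when the section fails to match up after going once around, it completes it by inserting a pair of transverse zeros upstairs which project to a single zero of the same sign downstairs; these extra zeros again contribute in the ratio $2:1$, so the conclusion $e(\widetilde{\Sigma'})=2\,e(\Sigma')$ survives. To repair your argument you must handle the one-sided case, e.g. allow $s$ to leave the line subbundle (or to vanish) near one point of each such circle and verify by a local computation that the resulting upstairs contribution is twice the downstairs one.
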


\begin{proof}

As a double branched cover, the manifold $\widetilde{W}$ is naturally endowed with an involution. We denote it by $\tau$. 
The intersection of $\Sigma$ and $\Sigma'$ is a finite collection $\mathcal{C}$ of circles. 

We construct a $\tau$-invariant section  $\widetilde{s}'$ of the normal bundle of $\widetilde{\Sigma'}$ in a tubular neighborhood of $\mathcal{C}$. In order to do this, we fix a base point $x_C$ on each circle $C$ and define the section locally. This can be extended along the circle. 

\[
  \begin{tikzpicture}
    \begin{scope}
  \draw[blue] (-1,0,0) -- (1,0,0);
  \draw[blue] (0,-1,0) -- (0,1,0); 
  \draw[green!50!black] (0,0,-1) -- (0,0,1);
  \draw[orange] (-0.4, 0, 0.3 ) -- (0.4, 0, 0.3 );
  \draw[orange] (0, -0.4, 0.3 ) -- ( 0, 0.4, 0.3 );
  \draw[thin, dotted, <-] (0.7,0,0) -- (1,1,0);
  \draw[thin, dotted, <-] (0,0.7,0) -- (1,1,0);
  \node[blue] at (1.2,1.2,0) {$\widetilde{\Sigma'}$};
  \draw[thin, dotted, <-] (0,0,0.7) -- (-1,0,0.7) node[left, green!50!black] {$\widetilde{\Sigma}$};
  \draw[thin, dotted, <-] (-0.3,0,0.3) -- (-1,1,0.3);
  \draw[thin, dotted, <-] (0,0.3,0.3) -- (-1,1,0.3);
  \node[orange] at (-1.2, 1.2) {$\widetilde{s'}$};

\end{scope}
  \end{tikzpicture}
\]

When reaching the base point $x_C$, we may face a gluing problem. In that case we complete the section as described in the picture:

\[
  \begin{tikzpicture}[scale=0.7]
    \begin{scope}
  \draw[blue] (-1,0,0) -- (1,0,0);
  \draw[blue] (0,-1,0) -- (0,1,0); 
  \draw[green!50!black] (0,0,-1) -- (0,0,1);
  \draw[orange] (-0.4, 0, 0.3 ) -- (0.4, 0, 0.3 );
  \draw[orange] (0, -0.4, 0.3 ) -- ( 0, 0.4, 0.3 );
\end{scope}

\begin{scope}[xshift=3cm]
  \draw[blue] (-1,0,0) -- (1,0,0);
  \draw[blue] (0,-1,0) -- (0,1,0); 
  \draw[green!50!black] (0,0,-1) -- (0,0,1);
  \draw[orange] (-0.4, 0.2, 0.3 ) -- (0.4, -0.2, 0.3 );
  \draw[orange] (-0.2, -0.4, 0.3 ) -- ( 0.2, 0.4, 0.3 );
\end{scope}

\begin{scope}[xshift=6cm]
  \draw[blue] (-1,0,0) -- (1,0,0);
  \draw[blue] (0,-1,0) -- (0,1,0); 
  \draw[green!50!black] (0,0,-1) -- (0,0,1);
  \draw[orange] (-0.4, 0.2, 0 ) -- (0.4, -0.2, 0 );
  \draw[orange] (-0.2, -0.4, 0 ) -- ( 0.2, 0.4, 0 );
  \fill (0,0,0)  circle (0.5mm);
\end{scope}

\begin{scope}[xshift=9cm]
  \draw[blue] (-1,0,0) -- (1,0,0);
  \draw[blue] (0,-1,0) -- (0,1,0); 
  \draw[green!50!black] (0,0,-1) -- (0,0,1);
  \draw[orange] (-0.4, 0.2, -0.3 ) -- (0.4, -0.2, -0.3 );
  \draw[orange] (-0.2, -0.4, -0.3 ) -- ( 0.2, 0.4, -0.3 );
\end{scope}

\begin{scope}[xshift=12cm]
  \draw[blue] (-1,0,0) -- (1,0,0);
  \draw[blue] (0,-1,0) -- (0,1,0); 
  \draw[green!50!black] (0,0,-1) -- (0,0,1);
  \draw[orange] (-0.4, 0, -0.3 ) -- (0.4, 0, -0.3 );
  \draw[orange] (0, -0.4, -0.3 ) -- ( 0, 0.4, -0.3 );
\end{scope} 
  \end{tikzpicture}
\]

This yields a pair of transverse intersection points of the $\widetilde{s}'$ with the null section of the normal bundle.

The image $s'$ of this (incomplete) section gives an (incomplete) transverse section of the normal bundle of $\Sigma'$ transverse to the trivial section. The image of the neighborhoods of the intersection points is given by:

\[
  \begin{tikzpicture}[scale=0.7]
    \begin{scope}
  \draw[blue] (-1,0,0) -- (1,0,0);
  \draw[green!50!black] (0,0,-1) -- (0,0,1);
  \draw[orange] (-0.4, 0, 0.3 ) -- (0.4, 0, 0.3 );

\end{scope}

\begin{scope}[xshift=3cm]
  \draw[blue] (-1,0,0) -- (1,0,0);
  \draw[green!50!black] (0,0,-1) -- (0,0,1);
  \draw[orange] (-0.4, 0.2, 0.3 ) -- (0.4, -0.2, 0.3 );

\end{scope}

\begin{scope}[xshift=6cm]
  \draw[blue] (-1,0,0) -- (1,0,0);
  \draw[green!50!black] (0,0,-1) -- (0,0,1);
  \draw[orange] (-0.4, 0.2, 0 ) -- (0.4, -0.2, 0 );

  \fill (0,0,0)  circle (0.5mm);
\end{scope}

\begin{scope}[xshift=9cm]
  \draw[blue] (-1,0,0) -- (1,0,0);
  \draw[green!50!black] (0,0,-1) -- (0,0,1);
  \draw[orange] (-0.4, 0.2, -0.3 ) -- (0.4, -0.2, -0.3 );

\end{scope}

\begin{scope}[xshift=12cm]
  \draw[blue] (-1,0,0) -- (1,0,0);
  \draw[green!50!black] (0,0,-1) -- (0,0,1);
  \draw[orange] (-0.4, 0, -0.3 ) -- (0.4, 0, -0.3 );

\end{scope} 
  \end{tikzpicture}
\]
 
Hence, to each pair of intersection points in $\widetilde{W'}$ corresponds a single intersection point of the same sign. We extend the section $s'$ into a complete section of the normal bundle of $\Sigma'$. Its pre-image extend $\widetilde{s}'$ into a complete section of the normal bundle of  $\widetilde{W'}$. What happens far from the circles is  given by part~(\ref{it:kauf2}) of Lemma~\ref{lem:kauffman}.
\end{proof}

\subsection{Proof of the main theorem}
\label{sec:proof-main-theorem}

The principle of the proof is inspired by \cite{GL78,GL79}  which use the 4-dimensional point of view 
of Kauffman and Taylor \cite{KT76} on signature of knots.
It uses the $G$-signature theorem by Atiyah and Singer~\cite{MR0236952}.

Following \cite{GSign} let us recall the definition of signature in the context of 4-dimensional manifolds. 
Let $W$ be a compact oriented $4$-manifold (with or without boundary).
The intersection form of $W$ induces an hermitian form
$\varphi$ on $H_2(W,\CC)$.   
Take a finite group $G$ which
acts on $W$ by orientation-preserving diffeomorphisms. Then for every $g$ in $G$, $g_*$ preserves the form 
$\varphi$ on $H_2(W,\CC)$ and we may choose a $G$-invariant $\varphi$-orthogonal decomposition 
$H_2(W,\CC)=H^+ \oplus H^- \oplus H^0$ such that $\varphi$ is positive definite on $H^+$, negative definite on $H^-$ 
and zero on $H^0$.
The \emph{signature of $g$} is defined by:
\[
\sigma(W,g) := \sigma(H_2(W),g_*) = \mathrm{tr}({g_*}_{|H^+}) - \mathrm{tr}({g_*}_{|H^-}),\] 
where $\mathrm{tr}$ denotes the trace of linear endomorphisms.

\begin{rmk}
  \label{rmk:signid}
  The signature $\sigma(W$) of a manifold $W$ is the signature of the identity of $W$. 
\end{rmk}

The following formula connects the sum of the signature of the elements of $G$ and the signature of the 
quotient manifold $W/G$ and is known in the literature as a {standard transfer argument}.

\begin{prop}\label{prop:transfert}
 Let $W$ be a compact oriented $4$-manifold endowed with an action of a finite group $G$ by orientation-preserving diffeomorphisms.
 Then we have: 
 \[\sum_{g \in G}\sigma(W,g) = |G|\sigma(W/G)\]
\end{prop}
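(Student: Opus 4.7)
The plan is to expand the definition of $\sigma(W,g)$ and combine two standard ingredients: an averaging identity from representation theory to isolate the trivial-representation summand, and the transfer isomorphism to identify the $G$-invariants of $H_2(W,\CC)$ with $H_2(W/G,\CC)$.

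First, since every $g \in G$ acts by orientation-preserving diffeomorphisms, $g_*$ preserves the Hermitian form $\varphi$ on $H_2(W,\CC)$; by averaging any positive-definite Hermitian inner product, one obtains a $G$-invariant one, and hence one may choose the polarization $H_2(W,\CC) = H^+ \oplus H^- \oplus H^0$ to consist of $G$-stable summands. Then
\[
\sum_{g \in G}\sigma(W,g) \;=\; \mathrm{tr}\Bigl(\sum_{g\in G} {g_*}_{|H^+}\Bigr) - \mathrm{tr}\Bigl(\sum_{g\in G} {g_*}_{|H^-}\Bigr).
\]
On any complex $G$-representation $V$, the operator $\frac{1}{|G|}\sum_{g\in G} g_*$ is the projector onto $V^G$ (orthogonality of characters), so its trace equals $\dim V^G$. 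Applied to $V=H^+$ and $V=H^-$, this yields
\[
\sum_{g \in G}\sigma(W,g) \;=\; |G|\bigl(\dim (H^+)^G - \dim(H^-)^G\bigr).
\]

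The second step is to recognise the right-hand side as $|G|\,\sigma(W/G)$. Here I would invoke the classical transfer isomorphism: since $|G|$ is invertible in $\CC$, the averaging map $\frac{1}{|G|}\sum_g g^*$ is a left inverse to $\pi^*$, so $\pi^* \co H^2(W/G, \CC) \to H^2(W, \CC)^G$ is an isomorphism, and dually $H_2(W/G,\CC) \simeq H_2(W,\CC)^G$. Moreover, for two classes $\alpha,\beta \in H_2(W/G,\CC)$, since the fixed-point set of the $G$-action has codimension at least $2$ in $W$, representatives of $\pi^*\alpha$ and $\pi^*\beta$ can be chosen meeting transversally in the free part of the action; each transverse intersection point downstairs then lifts to $|G|$ transverse intersection points upstairs of the same sign, giving $\pi^*\alpha \cdot \pi^*\beta = |G|\,(\alpha \cdot \beta)$. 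Hence the restriction of the intersection form to $H_2(W,\CC)^G$ equals $|G|$ times the intersection form on $H_2(W/G,\CC)$; since $|G|>0$, these two forms share the same signature, which is precisely $\dim (H^+)^G - \dim(H^-)^G = \sigma(W/G)$.

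Combining the two steps gives $\sum_{g \in G}\sigma(W,g) = |G|\sigma(W/G)$. The most delicate point is verifying the scaling factor between the intersection forms at the presence of fixed points; this is what the codimension-$2$ structure of the singular set controls, as it ensures the relevant cycles can always be pushed into the free locus of the action to compute the pairings.
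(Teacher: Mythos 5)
Your proof is correct and is essentially the paper's own standard transfer argument: both hinge on identifying $H_2(W/G,\CC)$ with the $G$-invariants of $H_2(W,\CC)$ via transfer and on the fact that the intersection form scales by $|G|$ under this identification. You merely reorganize it, using the averaging projector $\frac{1}{|G|}\sum_g g_*$ and character orthogonality where the paper directly computes $\sigma\bigl(H_2(W), t\circ\pi_*\bigr)$ from the identities $t\circ\pi_*=\sum_g g_*$ and $\varphi_W(tx,ty)=|G|\varphi_{W/G}(x,y)$.
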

\begin{proof}[Sketch of the proof]
Let $\pi$ be the projection $\pi: W \longrightarrow W/G$.
Then there exists cellular decompositions of $W$ and $W/G$ compatible with the action of $G$ such that $\pi$ 
is cellular. Moreover, at the level of chains one can define a map which assigns to each 2-chain of $W/G$ the sum 
of its pre-images by $\pi_*$. This induces a transfer map $t:H_2(W/G) \longrightarrow H_2(W)^G \hookrightarrow H_2(W)$ 
respecting intersections forms in the sense that $\varphi_W ( tx, ty) = |G| \varphi_{W/G} ( x, y)$.
Moreover we have 
$\pi_* \circ t = |G| \id_{H_2(W/G)}$ and $t \circ \pi_* = \sum_{g \in G} g_*$ on $H_2(W)$.
It follows that:
\begin{align*}
\sum_{g \in G}\sigma(W,g)  & = \sigma (H_2(W), t \circ \pi_*)\\
& = \sigma (H_2(W)^G, t \circ \pi_*)\\
& = |G|\sigma (H_2(W)^G, \mathrm{id} )\\  
& = |G| \sigma (W/G).
\end{align*}
\end{proof}
The $G$-signature theorem for $4$-manifolds \cite[Theorem 2]{GSign} takes an especially simple form for involutions:
\begin{thm}
  \label{thm:Gsignature}
Let $W$ be an oriented closed $4$-manifold and $\tau$ be an orientation-preserving smooth involution. 
Then the set of fixed points of $\tau$ consists of a surface $F_\tau$ and  some isolated points and we have:
\[  \sigma(W,\tau)=  e(F_\tau). \]
\end{thm}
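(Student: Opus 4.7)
The plan is to derive the statement from the general Atiyah--Singer $G$-signature theorem, which the authors cite as \cite{GSign}; two things must be checked: the structural claim about $W^\tau$, and the identification of local contributions so that only the surface $F_\tau$ contributes, with value $e(F_\tau)$.

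First I would analyze the local structure of the fixed set. At a fixed point $x$, the differential $d\tau_x$ is an orientation-preserving involution of $T_xW\cong\RR^4$. Being an involution, it is diagonalizable with eigenvalues in $\{+1,-1\}$; being orientation-preserving, the product of eigenvalues is $+1$. Hence the possible eigenvalue patterns are $(+,+,+,+)$, $(+,+,-,-)$, or $(-,-,-,-)$. The first case is excluded at an isolated point of $W^\tau$ of a non-trivial involution; the second yields a $2$-dimensional sheet of fixed points through $x$; the third yields an isolated fixed point. Combined with compactness of $W$, this proves that $W^\tau$ is the disjoint union of a closed surface $F_\tau$ and finitely many isolated points $p_1,\ldots,p_k$.

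Second I would invoke the $G$-signature theorem to write $\sigma(W,\tau)$ as a sum of local terms indexed by the connected components of $W^\tau$. For an isolated fixed point $p_i$, we complexify the tangent space by pairing the four real $(-1)$-eigenspaces into two complex lines; both complex eigenvalues equal $-1$. The Atiyah--Bott contribution is $\prod_{j=1}^{2}\frac{\theta_j+1}{\theta_j-1}$, which vanishes because each numerator is $0$. For the surface component, a tubular neighborhood is the total space of the normal disc bundle of $F_\tau$ with $\tau$ acting by $-\id$ on each fiber; the local contribution here is the self-intersection $[F_\tau]\cdot[F_\tau]$ in $W$, which by Definition~\ref{dfn:NENclosedembeddedsurface} is the normal Euler number $e(F_\tau)$.

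The main obstacle is not something we need to redo, since the general $G$-signature theorem is being imported as a black box; the rest is the local verification above. Adding the contributions yields $\sigma(W,\tau) = 0 + e(F_\tau) = e(F_\tau)$, as claimed. In the sequel this will be applied to the involutions on the Klein cover $W_F$ generated by the elements of $\kg^*$, where the fixed surfaces are the various $\widehat{F}_i$ and the formula will convert the equivariant signatures into normal Euler numbers, which is exactly the bridge needed to prove Theorem~\ref{thm:main}.
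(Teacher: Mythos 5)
The paper offers no proof of this statement at all: it is imported verbatim as \cite[Theorem 2]{GSign} (an elementary, index-theory-free treatment of the $4$-dimensional $G$-signature theorem), so your argument should be compared with that reference rather than with anything in the text. Your route—deducing the involution case from the general Atiyah--Singer $G$-signature theorem \cite{MR0236952}—is the standard one and is essentially correct: after averaging a metric so that $\tau$ acts by isometries, the eigenvalue analysis $(+,+,-,-)$ versus $(-,-,-,-)$ does give that $W^\tau$ is a closed surface together with finitely many isolated points (the case $d\tau_x=\id$ should be excluded by noting that an isometry fixing $x$ with identity differential is the identity near $x$, contradicting nontriviality of $\tau$ on that component); the isolated points contribute $\prod_j(\lambda_j+1)/(\lambda_j-1)=0$ since all normal eigenvalues are $-1$ (equivalently $-\cot(\pi/2)\cot(\pi/2)=0$); and the surface term with normal rotation angle $\pi$ contributes its self-intersection. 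One point deserves more care than your write-up gives it: $F_\tau$ need not be orientable, so ``$[F_\tau]\cdot[F_\tau]$'' is not literally defined in $H_2(W;\ZZ)$; the local contribution in the $G$-signature formula is the Euler number of the (possibly non-orientable) normal bundle evaluated against the fundamental class with local coefficients, which is precisely $e(F_\tau)$ in the sense of Definition~\ref{dfn:NENclosedembeddedsurface} (cf.\ Remark~\ref{rmk:eulerclass}). Handling this twisted-coefficient case cleanly is exactly what the elementary approach of \cite{GSign} buys, whereas your argument buys generality by leaning on the full equivariant index theorem; both are legitimate, and your local verification, with the two caveats above made explicit, closes the gap.
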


\begin{rmk}
  If $\tau$ is not an involution (but still has finite order), the statement is a bit more complicated and there are some contribution coming from the isolated fixed points.

\end{rmk}

\begin{proof}[Proof of the Theorem~\ref{thm:main}]

We first prove the invariance of $\sigma$, and $\widetilde{\sigma}$. This implies the invariance of $\delta$. 
The invariance of the $\delta_{ij}$'s is postponed to the proof of Proposition~\ref{prop:weaksignaturebicolor}.

Let us  consider two spanning foams  $(\BB^4_1,F_1)$ and $(\BB^4_2,F_2)$ for $(\SS^3,\Gamma)$ 
(the indices of the $\BB^4$ are simply labels with no geometrical meaning). 
Then we can consider the gluing $(\SS^4,E) = (\BB^4_1,F_1) \cup_{(\SS^3,\Gamma)} (-\BB^4_2,F_2)$. 
Lifting  to Klein covers we get the gluing $V_E = W_{F_1} \cup_{M_\Gamma} (-W_{F_2})$.
Novikov additivity \cite[Theorem 5.3]{MR1001966}  gives: 
\begin{align}\label{proof:eq1}
   \sigma (V_E) = \sigma({W}_{F_1}) - \sigma({W}_{F_2}). 
  \end{align}

The action of the Klein group $\kg$ on $V_E$ defines a collection of signatures which satisfies, 
thanks to Proposition \ref{prop:transfert}, the relation:
\[\sigma (V_E) + \sum_{g \in \kg^*}\sigma(V_E,g) = 4\sigma(\SS^4)=0.\]
Together with Theorem~\ref{thm:Gsignature} this gives:
\begin{align}\label{proof:eq2}
-\sigma (V_E) =& \sigma(V_E,\aa) +\sigma(V_E,\bb) + \sigma(V_E,\cc) 
 =e(\widehat{E}_{\aa}) + e(\widehat{E}_{\bb}) + e(\widehat{E}_{\cc}).
 \end{align}
 In the diagram 
 \[
\begin{tikzpicture}
    \node (HW) at (0,2) {$V_E$};
    \node (TW1) at (-2,1) {$V_{E}/\aa$};
    \node (TW2) at (0,1) {$V_{E}/\bb$};
    \node (TW3) at (2,1) {$V_{E}/\cc$};
    \node (B4) at (0,0) {$\SS^4$};
    \draw[->>] (HW) -- (TW1);
    \draw[->>] (HW) -- (TW2);
    \draw[->>] (HW) -- (TW3);
    \draw[->>] (TW1) -- (B4);
    \draw[->>] (TW2) -- (B4);
    \draw[->>] (TW3) -- (B4);
\end{tikzpicture}
\]
the three top arrows are double branched covers along $\widetilde{E}^{\aa}_{\aa}$, $\widetilde{E}^{\bb}_{\bb}$ and
$\widetilde{E}^{\cc}_{\cc}$ respectively. 
So applying first part of Lemma~\ref{lem:kauffman}, we get:
\begin{align*}
-\sigma (V_E) = & \frac{1}{2}e(\widetilde{E}^{\aa}_{\aa}) + \frac{1}{2}e(\widetilde{E}^{\bb}_{\bb}) 
+ \frac{1}{2}e(\widetilde{E}^{\cc}_{\cc}).
\end{align*}
Note that in the gluing $V_{E}/\aa = (W_{F_1}/\aa) \cup_{M_{\Gamma}/\aa} (-W_{F_2}/\aa)$ 
the 3-manifold $M_{\Gamma}/\aa$ is a rational 
homology sphere (see Remark~\ref{rmk:homologydoublecover}) so that thanks to Proposition~\ref{prop:en-change-orientation-and-glue} and Definition~\ref{dfn:eulernumberfoams} we obtain:
\begin{align*}
-\sigma (V_E) = & \frac{1}{2}\left(e(\widetilde{(F_1)}^{\aa}_{\aa}) - e( \widetilde{(F_2)}^{\aa}_{\aa} ) 
+ e(\widetilde{(F_1)}^{\bb}_{\bb}) - e(\widetilde{(F_2)}^{\bb}_{\bb} ) 
+ e(\widetilde{(F_1)}^{\cc}_{\cc})-e( \widetilde{(F_2)}^{\cc}_{\cc} )\right)\\ 
=& \frac12\widetilde{e}(F_1) - \frac12\widetilde{e}(F_2).
\end{align*}
From this and Relation~(\ref{proof:eq1}) we deduce:
\[\sigma({W}_{F_1}) + \frac12\widetilde{e}(F_1) = \sigma({W}_{F_2}) + \frac12\widetilde{e}(F_2).
\]
This proves the invariance of $\widetilde{\sigma}$.

On the other hand, starting again from relation (\ref{proof:eq2}) we get:
\begin{align*}
-\sigma (V_E) = & \frac{1}{2}\left(e( \widehat{E}_{\aa}) + e(\widehat{E}_{\bb})\right) + \frac{1}{2}\left(e( \widehat{E}_{\bb}) + e(\widehat{E}_{\cc})\right)
+ \frac{1}{2}\left(e( \widehat{E}_{\cc}) + e(\widehat{E}_{\aa})\right)\\
= & \frac{1}{2}e( \widehat{E}_{\aa}\cup\widehat{E}_{\bb}) + \frac{1}{2}e( \widehat{E}_{\bb}\cup\widehat{E}_{\cc})
+ \frac{1}{2}e( \widehat{E}_{\cc}\cup\widehat{E}_{\aa}) \\
=  &
e(\widetilde{E}^{\cc}_{\aa, \bb}) + 
e(\widetilde{E}^{\aa}_{\bb,\cc}) + e(\widetilde{E}^{\bb}_{\cc,\aa} ) \\
= & \frac12\left( e(E_{\aa \bb}) + e(E_{\bb \cc}) + e(E_{\cc \aa})\right) \\
= & \frac12 e(F_1) - \frac12 e(F_2).
\end{align*}
The third and fourth equalities follows from Lemmas~\ref{lem:ENalmosttransverse} and \ref{lem:kauffman}, while the last one holds because of Proposition~\ref{prop:en-change-orientation-and-glue}.    
Again from this and relation (\ref{proof:eq1}) we deduce:
\[
\sigma({W}_{F_1}) +\frac12e(F_1) = \sigma({W}_{F_2}) + \frac12e(F_1)
\]
This proves the invariance of ${\sigma}$.

\end{proof}

Before proving Proposition~\ref{prop:weaksignaturebicolor}, we need
the following lemma.

 \begin{lem}\label{lem:sigmasigma} 
Let $F$ be foam with boundary in $\mathbb{B}^4$, then we have:
\[
\sigma (W_F) = \sigma(W_F/\aa) +\sigma(W_F/\bb) +
\sigma(W_F/\cc).
\]
 \end{lem}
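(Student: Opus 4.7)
The plan is to combine two applications of the transfer principle (Proposition~\ref{prop:transfert}) in order to relate $\sigma(W_F)$ to the signatures $\sigma(W_F, g)$ of the group elements, and then back to the signatures of the intermediate quotients. Although Proposition~\ref{prop:transfert} is stated for closed manifolds, its proof only uses the intersection form on $H_2$ and the transfer map $t$ induced by a $G$-equivariant cellular decomposition, so the same argument applies to compact oriented $4$-manifolds with boundary. I will use this freely.

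First, I apply the transfer to the full Klein action of $\kg$ on $W_F$, whose quotient is $\BB^4$. Since $H_2(\BB^4) = 0$, we have $\sigma(\BB^4) = 0$, and Proposition~\ref{prop:transfert} yields
\[
\sigma(W_F) + \sigma(W_F,\aa) + \sigma(W_F,\bb) + \sigma(W_F,\cc) \;=\; 4\,\sigma(\BB^4) \;=\; 0.
\]
Next, for each $i \in \kg^*$, I apply the transfer to the order-two subgroup $\langle i\rangle$ acting on $W_F$. The quotient is $W_F/i$, and Proposition~\ref{prop:transfert} gives
\[
\sigma(W_F) + \sigma(W_F,i) \;=\; 2\,\sigma(W_F/i).
\]

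Summing the three equations of this second type over $i \in \kg^*$ produces
\[
3\,\sigma(W_F) + \sum_{i \in \kg^*}\sigma(W_F,i) \;=\; 2\bigl(\sigma(W_F/\aa)+\sigma(W_F/\bb)+\sigma(W_F/\cc)\bigr).
\]
Using the first equation to substitute $\sum_{i\in\kg^*}\sigma(W_F,i) = -\sigma(W_F)$, the left-hand side collapses to $2\sigma(W_F)$, and after dividing by $2$ we obtain the desired identity $\sigma(W_F) = \sigma(W_F/\aa)+\sigma(W_F/\bb)+\sigma(W_F/\cc)$.

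The only mildly delicate point is justifying that the transfer formula of Proposition~\ref{prop:transfert} survives the passage from closed manifolds to the manifold-with-boundary $W_F$. This is not actually an obstacle: the proof sketch given earlier builds the transfer map $t\colon H_2(W/G)\to H_2(W)^G$ purely from a $G$-equivariant cellular decomposition and compares intersection forms, and both ingredients make sense verbatim for compact oriented $4$-manifolds with boundary. Since no appeal to Poincaré duality on a closed manifold is used in the step $\sigma(H_2(W)^G,\, t\circ \pi_*) = |G|\sigma(H_2(W)^G,\mathrm{id})$, the argument carries over, and the lemma follows.
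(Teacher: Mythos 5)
Your proof is correct and is essentially the paper's own argument: apply the transfer formula of Proposition~\ref{prop:transfert} once to the full $\kg$-action (quotient $\BB^4$, signature $0$) and once to each involution $\langle i\rangle$ (quotient $W_F/i$), then eliminate the terms $\sigma(W_F,i)$. Your closing worry is moot, since Proposition~\ref{prop:transfert} is already stated for compact oriented $4$-manifolds, with or without boundary.
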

\begin{proof} 
Using Proposition~\ref{prop:transfert} for the manifold
$W_F$ endowed with the action of the Klein group $\kg$ we get:
\[
\sigma(W_F)+\sigma(W_F,\aa)+\sigma(W_F,\bb)+\sigma(W_F,\cc)=4\sigma(\BB^4)=0.
\]
Now the same proposition applied to the action of the involution $\aa$
on $W_F$ gives:
\[
\sigma(W_F)+\sigma(W_F,\aa)=2\sigma(W_F/\aa)
\]
and analogous formulas for involutions $\bb$ and $\cc$.  The result
follows easily.
\end{proof}

\begin{proof}[Proof of Proposition~\ref{prop:weaksignaturebicolor}]

 Let  $F$ be an arbitrary spanning foam for $\Gamma$ in $\BB^4$.
The surface $F_{\aa \bb}$ is a spanning surface for the knot 
$\Gamma_{\aa \bb}$. Thanks to \cite[Theorem 2 and Corollary 5]{GL78}, we have: 
\begin{align}\label{proof:eq3} 
 \sigma(\Gamma_{\aa \bb}) = \sigma (W_F/\cc) + \frac12e(F_{\aa \bb}).
\end{align}

We obviously have analogous formulas for $\sigma(\Gamma_{\bb \cc})$ 
and $\sigma(\Gamma_{\cc \aa})$. This gives:
\begin{align*}
 \sigma(\Gamma_{\aa \bb})+ \sigma(\Gamma_{\bb \cc})+ \sigma(\Gamma_{\cc \aa})
 = & \sigma(W_F/\cc) +\sigma(W_F/\aa) + \sigma(W_F/\bb)
 \\ & \quad +\frac12\left( e(F_{\aa \bb}) + e(F_{\bb \cc}) + e(F_{\cc \aa})\right)\\
 = & \sigma (W_F) + \frac12e(F)&  
\\
 = & \sigma(\Gamma),
\end{align*}
where the second equality comes from Lemma~\ref{lem:sigmasigma}. This is exactly the first part of Proposition~\ref{prop:weaksignaturebicolor}.

The second part is in some sense a generalization. 
Here $\widetilde{\Gamma}^{\aa}_{\aa}$ is a link in $M_\Gamma/a$ and $\widetilde{F}_{\aa}^{\aa}$ is a spanning surface for this link 
(living in $W_F/\aa$). Recall that $M_\Gamma/a$ is a rational homology whose first homology group has odd order. Hence Definition~\ref{dfn:xiinM} gives: 
\begin{align}\label{proof:eq4}
\xi(\tilde{\Gamma}_\aa^\aa) = 
\sigma(W_F) -2 \sigma(W_F/\aa)  +\frac12 e(\widetilde{F}^\aa _\aa).
\end{align}
Doing the sum with the analogous formulas for $\xi(\tilde{\Gamma}_\bb^\bb)$ and
$\xi(\tilde{\Gamma}_\cc^\cc)$ we get:
\begin{align*}
 \xi(\tilde{\Gamma}_\aa^\aa) + \xi(\tilde{\Gamma}_\bb^\bb) + \xi(\tilde{\Gamma}_\cc^\cc)
 = & 3\sigma (W_F) -2 \left(\sigma(W_F/\aa) +\sigma(W_F/\bb) + \sigma(W_F/\cc)\right)
 \\ & \quad+\frac12\left(e(\widetilde{F}^\aa _\aa)+e(\widetilde{F}^\bb _\bb)+e(\widetilde{F}^\cc _\cc)\right)\\
 = & \sigma (W_F) + \frac12\widetilde{e}(F) \\
 = & \widetilde{\sigma}(\Gamma),
\end{align*} 
where  the second equality follows from Lemma~\ref{lem:sigmasigma}.

It remains to show that 
\[
 \delta_{ij}(\Gamma) =\frac12\xi(\widetilde{\Gamma}^{i}_i)+\frac12\xi(\widetilde{\Gamma}^{j}_j)
    -\sigma (\Gamma_{ij}) \quad \textrm{for all $i\neq j$ in $\kg^*$}.
\]
Note that this formula implies the invariance of the $\delta_{ij}$'s and therefore completes the proof of Theorem~\ref{thm:main}.

By symmetry it is enough to consider $i=a$ and $j=b$. We have:
\begin{align*}
&  \frac12\xi(\widetilde{\Gamma}^{\aa}_\aa)+\frac12\xi(\widetilde{\Gamma}^{\bb}_\bb)
    -\sigma (\Gamma_{\aa\bb}) \\ 
\qquad &= \frac12\left(\sigma(W_F) -2 \sigma(W_F/\aa)  +\frac12 e(\widetilde{F}^\aa _\aa)
+\sigma(W_F) -2 \sigma(W_F/\bb)  +\frac12 e(\widetilde{F}^\bb _\bb) \right) \\
& \qquad \qquad
- \left(\sigma (W_F/\cc) + \frac12e(F_{\aa \bb}). \right)  \\
\qquad &= \sigma(W_F) -(\sigma(W_F/\aa) + \sigma(W_F/\bb) + \sigma(W_F/\cc)) + \frac14e(\widetilde{F}^{\aa}_{\aa})+\frac14e(\widetilde{F}^{\bb}_{\bb}) -\frac12e(F_{\aa \bb})  \\
&= \delta_{ab}(\Gamma). 
\end{align*}
The last equality follows from Lemma~\ref{lem:sigmasigma}.
\end{proof}

\section{An example}
\label{sec:an-example}

In this section we compute our signature invariants on $\Gamma$, the Kinoshita knotted graph. We describe a spanning foam $F$ 
for $\Gamma$ by a movie given in Figure~\ref{fig:kinmov}. 
\begin{figure}[ht]
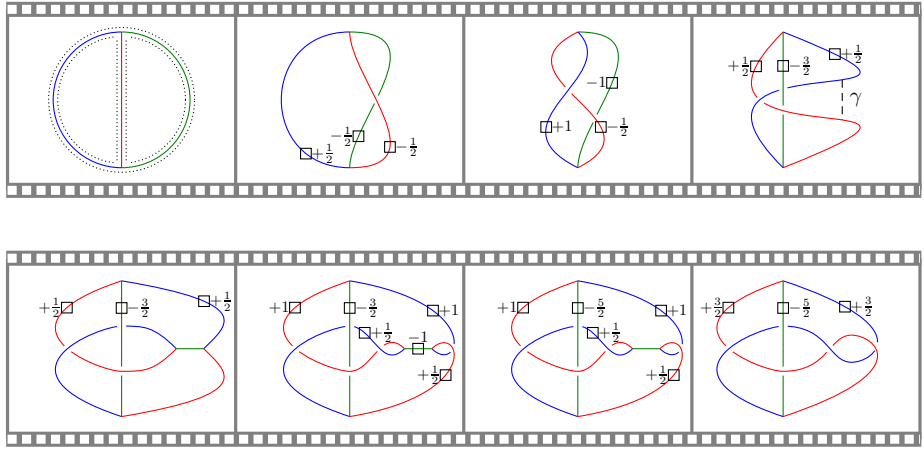

  \centering
  \tikz[scale=0.6]{\begin{scope}
\begin{scope}[scale=1]
  \draw[draw=  gray, line width = 2mm] (-0.03, 2) --  (20.03, 2); 
  \draw[draw=  gray, line width = 2mm] (-0.03,-2) --  (20.03,-2); 
  \draw[draw=  gray, very thick] (0, -2) -- +(0,4);
  \draw[draw=  gray, very thick] (5, -2) -- +(0,4);
  \draw[draw=  gray, very thick] (10,-2) -- +(0,4);
  \draw[draw=  gray, very thick] (15,-2) -- +(0,4);
  \draw[draw=  gray, very thick] (20,-2) -- +(0,4);
  \draw[draw= white, dotted, line width =1.2mm] (0,2) --  (20,2); 
  \draw[draw= white, dotted, line width = 1.2mm] (0,-2) --  (20,-2); 
\begin{scope}[xshift=2.5cm]  
  \draw[black,densely dotted] (0,0) circle (1.6cm);
  \draw[black,densely dotted] (0,0) circle (1.4cm);
  \draw[black, densely dotted]  (0.1,-1.5) -- (0.1,1.5);
  \draw[black, densely dotted]  (-0.1,-1.5) -- (-0.1,1.5);
  \fill[white] (-0.15,1.5) -- (0.15,1.5) -- (0.15,1.38) -- (-0.15,1.38);
  \fill[white] (-0.15,-1.5) -- (0.15,-1.5) -- (0.15,-1.38) -- (-0.15,-1.38);
  \draw[red]  (0,-1.5) -- (0,1.5);
  \draw[green!50!black] (0, -1.5) arc (-90:90: 1.5cm);
  \draw[blue] (0, 1.5) arc (90:270: 1.5cm);

\end{scope}
\begin{scope}[xshift=7.5cm]
   \draw[green!50!black]  (0,1.5) .. controls +(+2,0) and +(0,1) .. (0,-1.5) coordinate[pos=0.8] (G);
   \node[draw,scale=.6] at (G) {};
   \draw (G) node[left,scale=.6]{$-\frac12$};
   \fill[white] (0.6, 0) circle (1mm);
   \draw[red]  (0,-1.5) .. controls +(2,0) and +(0,-1) .. (0,1.5) coordinate[pos=0.3] (R);
   \node[draw,scale=.6] at (R) {};
   \draw (R) node[right,scale=.6]{$-\frac12$};
   \draw[blue]  (0,-1.5) .. controls +(-2,0) and +(-2,0) .. (0,1.5) coordinate[pos=0.2] (B);
   \node[draw,scale=.6] at (B) {};
   \draw (B) node[right,scale=.6]{$+\frac12$};
\end{scope}
\begin{scope}[xshift=12.5cm]
  \draw[green!50!black]  (0,1.5) .. controls +(+2,0) and +(0,1) .. (0,-1.5) coordinate[pos=0.5] (G) coordinate[pos=0.7] (GW);
  \fill[white] (GW) circle (1mm);
  \node[draw,scale=.6] at (G) {};
  \draw (G) node[left,scale=.6]{$-1$};
  \draw[red]  (0,-1.5) .. controls +(2,1) and +(-2,-1) .. (0,1.5) coordinate[pos=0.3] (R) coordinate[pos=0.57] (RW);
  \fill[white] (RW) circle (1mm);
  \node[draw,scale=.6] at (R) {};
  \draw (R) node[right,scale=.6]{$-\frac12$};
  \draw[blue]  (0,-1.5) .. controls +(-2,1) and +(1,-1) .. (0,1.5) coordinate[pos=0.3] (B);
  \node[draw,scale=.6] at (B) {};
  \draw (B) node[right,scale=.6]{$+1$};
\end{scope}
\begin{scope}[xshift=17cm]
  \draw[green!50!black]  (0,0) -- (0,-1.5) coordinate[pos=0.15] (GW);
  \fill[white] (GW) circle (1mm);
  \draw[red]  (0,-1.5) .. controls +(5,2) and +(-3,-3) .. (0,1.5) coordinate[pos=0.9] (R) coordinate[pos=0.73] (RW) coordinate[pos=0.4] (RC);
  \fill[white] (RW) circle (1mm);
  \node[draw,scale=.6] at (R) {};
  \draw (R) node[left,scale=.6]{$+\frac12$};
  \draw[blue]  (0,-1.5) .. controls +(-3,3) and +(5,-2) .. (0,1.5) coordinate[pos=0.9] (B) coordinate[pos=0.375] (BW) coordinate[pos=0.6] (BC);
  \node[draw,scale=.6] at (B) {};
  \draw (B) node[right,scale=.6]{$+\frac12$};
  \fill[white] (BW) circle (1mm);
  \draw[green!50!black]  (0,1.5) -- (0,0) coordinate[pos=0.5] (G);
  \node[draw,scale=.6] at (G) {};
  \draw (G) node[right,scale=.6]{$-\frac32$};
  \draw[densely dashed] (BC)  --  (RC) coordinate[pos=0.5] (M);
  \draw (M) node[right, scale=.8] {$\gamma$};
\end{scope}
\end{scope}
\begin{scope}[yshift=-5.5cm]
\begin{scope}[scale=1]
  \draw[draw=  gray, line width = 2mm] (-0.03, 2) --  (20.03, 2); 
  \draw[draw=  gray, line width = 2mm] (-0.03,-2) --  (20.03,-2); 
  \draw[draw=  gray, very thick] (0, -2) -- +(0,4);
  \draw[draw=  gray, very thick] (5, -2) -- +(0,4);
  \draw[draw=  gray, very thick] (10,-2) -- +(0,4);
  \draw[draw=  gray, very thick] (15,-2) -- +(0,4);
  \draw[draw=  gray, very thick] (20,-2) -- +(0,4);
  \draw[draw= white, dotted, line width =1.2mm] (0,2) --  (20,2); 
  \draw[draw= white, dotted, line width = 1.2mm] (0,-2) --  (20,-2); 
\begin{scope}[xshift=2.5cm]  
  \draw[green!50!black]  (0,0) -- (0,-1.5) coordinate[pos=0.33] (GW);
  \fill[white] (GW) circle (1mm);
  \draw[red]  (1.2,0) .. controls +(-.5,-.5) and +(.5,0) .. (0,-.5) coordinate[pos=0.42] (RWW) .. controls +(-.5,0) and +(-3,-1) .. (0,1.5) coordinate[pos=0.8] (R) coordinate[pos=0.44] (RW);
  \fill[white] (RW) circle (1mm);

  \node[draw,scale=.6] at (R) {};
  \draw (R) node[left,scale=.6]{$+\frac12$};
  \draw[blue]  (1.2,0) .. controls +(-.5,.5) and +(.5,0) ..  (0,.5)  .. controls +(-.5,0) and +(-3,1) .. (0,-1.5);
  \fill[white] (0,.5) circle (1mm);
  \draw[green!50!black]  (0,1.5) -- (0,0) coordinate[pos=0.4] (G);
  \node[draw,scale=.6] at (G) {};
  \draw (G) node[right,scale=.6]{$-\frac32$};
  \draw[red]  (0,-1.5) .. controls +(3,.5) and +(.5,-.5) .. (1.8,0);
  \draw[blue]  (0,1.5) .. controls +(3,-.5) and +(.5,.5) ..  (1.8,0) coordinate[pos=0.3] (B);
  \node[draw,scale=.6] at (B) {};
  \draw (B) node[right,scale=.6]{$+\frac12$}; 
  \draw[green!50!black]  (1.2,0) -- (1.8,0);
\end{scope}
\begin{scope}[xshift=7.5cm]
  \draw[green!50!black]  (0,0) -- (0,-1.5) coordinate[pos=0.33] (GW);
  \fill[white] (GW) circle (1mm);
  \draw[red]  (1.2,0) .. controls +(-.5,.5) and +(.5,0) ..  (0,-.5) coordinate[pos=0.43] (RWW) .. controls +(-.5,0) and +(-3,-1) .. (0,1.5) coordinate[pos=0.8] (R) coordinate[pos=0.44] (RW);
  \fill[white] (RW) circle (1mm);
  \fill[white] (RWW) circle (1mm);
  \node[draw,scale=.6] at (R) {};
  \draw (R) node[left,scale=.6]{$+1$};
  \draw[blue]  (1.2,0) .. controls +(-.5,-.5) and +(.5,0) .. (0,.5) coordinate[pos=0.75] (BB)  .. controls +(-.5,0) and +(-3,1) .. (0,-1.5);
  \fill[white] (0,.5) circle (1mm);     
  \node[draw,scale=.6] at (BB) {};
  \draw (BB) node[right,scale=.6]{$+\frac12$}; 
  \draw[green!50!black]  (0,1.5) -- (0,0) coordinate[pos=0.4] (G);
  \node[draw,scale=.6] at (G) {};
  \draw (G) node[right,scale=.6]{$-\frac32$};
  \draw[blue]  (0,1.5) .. controls +(3,-.5) and +(.6,-.6) ..  (1.8,0) coordinate[pos=0.3] (B) coordinate[pos=0.67] (BW);
  \node[draw,scale=.6] at (B) {};
  \draw (B) node[right,scale=.6]{$+1$}; 
  \fill[white] (BW) circle (1mm);
  \draw[red]  (0,-1.5) .. controls +(3,.5) and +(.6,.6) .. (1.8,0) coordinate[pos=0.4] (RR);
  \node[draw,scale=.6] at (RR) {};
  \draw (RR) node[left,scale=.6]{$+\frac12$};
  \draw[green!50!black]  (1.2,0) -- (1.8,0) coordinate[pos=0.5] (N) ;
  \node[draw,scale=.6] at (N) {};
  \draw (N) node[above,scale=.6]{$-1$}; 
\end{scope}
\begin{scope}[xshift=12.5cm]
  \draw[green!50!black]  (0,0) -- (0,-1.5) coordinate[pos=0.33] (GW);
  \fill[white] (GW) circle (1mm);
  \draw[red]  (1.2,0) .. controls +(-.5,.5) and +(.5,0) .. (0,-.5) coordinate[pos=0.43] (RWW) .. controls +(-.5,0) and +(-3,-1) .. (0,1.5) coordinate[pos=0.8] (R) coordinate[pos=0.44] (RW);
  \fill[white] (RW) circle (1mm);
  \fill[white] (RWW) circle (1mm);
  \node[draw,scale=.6] at (R) {};
  \draw (R) node[left,scale=.6]{$+1$};
  \draw[blue]  (1.2,0) .. controls +(-.5,-.5) and +(.5,0) ..  (0,.5) coordinate[pos=0.75] (BB)  .. controls +(-.5,0) and +(-3,1) .. (0,-1.5)  ;
  \fill[white] (0,.5) circle (1mm);     
  \node[draw,scale=.6] at (BB) {};
  \draw (BB) node[right,scale=.6]{$+\frac12$}; 
  \draw[green!50!black]  (0,1.5) -- (0,0) coordinate[pos=0.4] (G);
  \node[draw,scale=.6] at (G) {};
  \draw (G) node[right,scale=.6]{$-\frac52$};
  \draw[blue]  (0,1.5) .. controls +(3,-.5) and +(.6,-.6) ..  (1.8,0) coordinate[pos=0.3] (B) coordinate[pos=0.67] (BW);
  \node[draw,scale=.6] at (B) {};
  \draw (B) node[right,scale=.6]{$+1$}; 
  \fill[white] (BW) circle (1mm);
  \draw[red]  (0,-1.5) .. controls +(3,.5) and +(.6,.6) .. (1.8,0) coordinate[pos=0.4] (RR);
  \node[draw,scale=.6] at (RR) {};
  \draw (RR) node[left,scale=.6]{$+\frac12$};
  \draw[green!50!black]  (1.2,0) -- (1.8,0) coordinate[pos=0.5] (N) ;

  \end{scope}
  
\begin{scope}[xshift=17cm]
  \draw[green!50!black]  (0,0) -- (0,-1.5) coordinate[pos=0.33] (GW);
  \fill[white] (GW) circle (1mm);
  \draw[red]  (1.5,.3) .. controls +(-.5,0) and +(1,0) ..  (0,-.5) coordinate[pos=0.42] (RWW) .. controls +(-.5,0) and +(-3,-1) .. (0,1.5) coordinate[pos=0.8] (R) coordinate[pos=0.44] (RW);
  \fill[white] (RW) circle (1mm);
  \fill[white] (RWW) circle (1mm);
  \node[draw,scale=.6] at (R) {};
  \draw (R) node[left,scale=.6]{$+\frac32$};
  \draw[blue]  (0,+1.5) .. controls +(3,-1) and +(.5,0) .. (1.5,-.3) coordinate[pos=0.2] (B) coordinate[pos=0.6] (BW) .. controls +(-.5,0) and +(1,0) ..  (0,.5) .. controls +(-.5,0) and +(-3,1)  .. (0,-1.5);
  \node[draw,scale=.6] at (B) {};
  \draw (B) node[right,scale=.6]{$+\frac32$};
  \fill[white] (BW) circle (1mm); 
  \fill[white] (0,.5) circle (1mm);
  \draw[green!50!black]  (0,1.5) -- (0,0) coordinate[pos=0.4] (G);
  \node[draw,scale=.6] at (G) {};
  \draw (G) node[right,scale=.6]{$-\frac52$};
 \draw[red]  (0,-1.5) .. controls +(3,1) and +(.5,0) .. (1.5,.3);
\end{scope}
\end{scope}
\end{scope}
\end{scope}}
  \caption{Movie describing a spanning foam for the Kinoshita knotted graph}
\label{fig:kinmov}
\end{figure}
In between the successive frames of the movie, one has a canonical foamy cobordism. The spanning foam $F$ of $\Gamma$ is 
obtained by composing all these cobordisms together and finally glue this foam with a trivial half-theta foam. 
This gives a foam whose boundary is $\Gamma$. 

Since all sub-links of $\Gamma$ are trivial, we have $\sigma(\Gamma) =0$. In order to determine the other signature invariants it is enough to compute $e(F_{ab})$, $e(F_{bc})$, $e(F_{ac})$, $e(\widetilde{F}_{a}^{a})$, $e(\widetilde{F}_{b}^{b})$ and $e(\widetilde{F}_{c}^{c})$. 

As explained in Section~\ref{sec:normal-euler-numbers-1}, normal Euler numbers of surfaces with boundary can be computed via some linking numbers. We first inspect the surfaces $F_{ij}$.

The standard theta diagram can be seen as a framed graph equipped 
with three specific parallels (see the first frame in Figure~\ref{fig:kinmov}). Following these parallels during the various steps of the movie gives for each pair $\{i,j\}$ a {\it non-vanishing} section of the normal bundle of the surface (with boundary) $F_{ij}$. The boxes with integers or half-integers drawn in the movie encode twists or half-twists between an edge and its parallel.  

The normal Euler number $e(F_{ij})$ is equal to $-l_{ij}$,  where $l_{ij}$ is the linking number in $\SS^3$ of the knot $\Gamma_{ij}=\partial F_{ij}$ with its parallel 
(see Definition~\ref{dfn:QHSeulernumbers} and Remark~\ref{rmk:NENwithlk} (3)): that is the sum of all values of the boxes in the last frame plus the number of crossings counted  
algebraically. 

Note that between the frames 6 and 7, a box on a green ($c$) edge travels the other green edge (because we need to unzip the 
first edge). This does not impact the validity of the computation.

We have:
\begin{itemize}
\item $e({F_{ab}}) = -(1 +\frac32 + \frac32)=-4$,
\item $e({F_{bc}}) = e({F_{ac}})= -(1+\frac32-\frac52)=0$.
\end{itemize}

We now explain how to compute $e(\widetilde{F}_i^i)$ for $i$ in $\{a,b,c\}$. As we will see, it is possible to deduce 
$e(\widetilde{F}_a^a)$ and $e(\widetilde{F}_b^c)$ from $e(\widetilde{F}_c^c)$ by  using symmetries. For computing 
$e(\widetilde{F}_c^c)$,  we consider a section of the normal bundle of $\widetilde{F}_c^c$ (in $W_F/c$) transverse to the trivial 
section. 
Such a section can be read on the movie. Indeed, consider each step of the movie as a framed graph where every edge colored by $c$ comes 
with a parallel attached to its adjacent edges\footnote{Here the apparent choice of side has no effect.}. 
The pre-image of these parallels in the double cover along the knot consisting of edges colored by $a$ and $b$ 
is a genuine parallel of the pre-image of the edge, as shown in the following picture:
\[
  \tikz[scale=1]{\begin{scope}[xscale =0.5, yscale = 0.5]
\newcommand{\localcoordinate}{
 \coordinate (T) at (0,1);
  \coordinate (TL) at (-1,2);
   \coordinate (tL) at (-1,1);
  \coordinate (TR) at (1,2);
   \coordinate (tR) at (1,1);
  \coordinate (B) at (0,-1);
  \coordinate (R) at (1,0);
  \coordinate (O) at (0,0);
   \coordinate (L) at (-1,0);
   \coordinate (bL) at (-1,-1);
   \coordinate (BL) at (-1,-2);
    \coordinate (bR) at (1,-1);
   \coordinate (BR) at (1,-2);
}

\begin{scope}[yshift = 6cm]
\localcoordinate
 \draw[red] (T) -- (O);
 \draw[blue] (O) -- (B);
 \draw[black!50!green] (L) --(R);
 \draw[black!50!green,densely dotted] (-1,.2) --(1,.2);
\end{scope}

\begin{scope}[yshift = 3cm]
 \localcoordinate
 \draw[->>] (T) -- (B);
\end{scope}

\begin{scope}[yshift = 0cm]
 \localcoordinate
 \draw[red] (T) -- (O);
 \draw[blue] (O) -- (B);
 \draw[black!50!green] (O) --(R);
 \draw[black!50!green,densely dotted] (0,.2) --(1,.2);
 \end{scope}

\end{scope}
}
\]
Following these parallels during the movie gives an appropriate section of $\widetilde{F}_c^c$ in $W_{F}/c$.
The surface $\widetilde{F}_c^c$ has two connected components: a knotted sphere $S$ coming from the small horizontal green edge on the frames $5$, $6$ and $7$ and another component $\Sigma$ whose boundary is the knot $\widetilde{\Gamma}_c^c$.

Let us first deal with the knotted sphere $S$. We isolate the interesting part of the movie:
\[
\NB{
\tikz{\begin{scope}[xscale =0.5, yscale = 0.5]
  \draw[draw=  gray, line width = 2mm] (-0.03, 2) --  (25.03, 2); 
  \draw[draw=  gray, line width = 2mm] (-0.03,-2) --  (25.03,-2); 
  \draw[draw=  gray, very thick] (0, -2) -- +(0,4);
  \draw[draw=  gray, very thick] (5, -2) -- +(0,4);
  \draw[draw=  gray, very thick] (10,-2) -- +(0,4);
  \draw[draw=  gray, very thick] (15,-2) -- +(0,4);
  \draw[draw=  gray, very thick] (20,-2) -- +(0,4);
  \draw[draw=  gray, very thick] (25,-2) -- +(0,4);
  \draw[draw= white, dotted, line width =1.2mm] (0,2) --  (25,2); 
  \draw[draw= white, dotted, line width = 1.2mm] (0,-2) --  (25,-2); 
\newcommand{\localcoordinate}{
 \coordinate (T) at (0,1);
  \coordinate (TL) at (-1,2);
  \coordinate (TR) at (1,2);
  \coordinate (B) at (0,-1);
  \coordinate (O) at (0,0);
  \coordinate (BL) at (-1,-2);
  \coordinate (BR) at (1,-2);
}
\begin{scope}[xshift = 2.5cm, rotate =90]
 \localcoordinate
  \draw[black!50!green] [red] (TL)  .. controls +(1, -1) and +(1, 1) .. (BL);
  \draw[blue] (TR) .. controls +(-1, -1) and +(-1, 1) .. (BR);
\end{scope}
\begin{scope}[xshift = 7.5cm, , rotate =90]
\localcoordinate
   \draw[black!50!green] (T) -- (B);
   \draw[black!50!green, densely dotted] (T)+(-0.2,0.2) -- ($(B) +(-0.2, -0.2)$);
   \draw[red] (T) -- (TL);
   \draw[red] (B) -- (BL);
   \draw[blue] (T) -- (TR);
   \draw[blue] (B) -- (BR);
\end{scope}
\begin{scope}[xshift = 12.5cm, rotate =90]
\localcoordinate
   \draw[black!50!green, densely dotted] ($(T)+( 0.2, 0.2)$) --  ($(O) +(0.2, 0.5)$).. controls +(0,-0.2) and +(0, 0.2).. ($(O) + (-0.2,0)$);
   \draw[black!50!green] (B) -- (O);
   \fill[white] ($(O) + (0,-0.25)$) circle (1mm);
   \fill[white] ($(O) + (0,0.25)$) circle (1mm);
   \draw[black!50!green] (T) -- (O);
   \draw[black!50!green, densely dotted] ($(B)+( 0.2,-0.2)$) --  ($(O) +(0.2,-0.5)$).. controls +(0, 0.2) and +(0,-0.2).. ($(O) + (-0.2,0)$);
   \draw[red] (T) ..  controls +(0.7, 0.7) and + (0.5,-0.5)  ..  (TL);
   \fill[white] ($(T) + (0,0.55)$) circle (1.5mm);
   \draw[blue] (T)..   controls +(-0.7, 0.7) and + (-0.5,-0.5)   ..  (TR);
   \draw[blue] (B)..   controls +(-0.7, -0.7) and + (-0.5,0.5)   ..  (BR);
   \fill[white] ($(B) + (0,-0.55)$) circle (1.5mm);
   \draw[red] (B) ..  controls +(0.7, -0.7) and + (0.5,0.5)  ..  (BL);
   \node (A1) at (-3, 0) {};
\end{scope}
\begin{scope}[xshift = 17.5cm, , rotate =90]
  \localcoordinate
   \draw[black!50!green] (B) -- (T);
   \draw[black!50!green, densely dotted] (T)+(0.2,0.2) -- ($(B) +(0.2, -0.2)$);
   \draw[red] (T) ..  controls +(0.7, 0.7) and + (0.5,-0.5)  ..  (TL);
   \fill[white] ($(T) + (0,0.55)$) circle (1.5mm);
   \draw[blue] (T)..   controls +(-0.7, 0.7) and + (-0.5,-0.5)   ..  (TR);
   \draw[blue] (B)..   controls +(-0.7, -0.7) and + (-0.5,0.5)   ..  (BR);
   \fill[white] ($(B) + (0,-0.55)$) circle (1.5mm);
   \draw[red] (B) ..  controls +(0.7, -0.7) and + (0.5,0.5)  ..  (BL);
   \node (A2) at (-3, 0) {};
\end{scope}
\draw[->] (A1) .. controls +(1,-0.5) and +(-1,-0.5) .. (A2) node[midway, below, text width = 3cm, text centered, font=\tiny] {{This introduces two positive singular points in the double branched cover.}};
\begin{scope}[xshift = 22.5cm, , rotate =90]
   \localcoordinate
   \draw[red] ($(O)+(0.5, 0)$) ..  controls +(0, 0.7) and + (0.5,-0.5)  ..  (TL);
   \fill[white] (T) circle (1.5mm);
   \draw[blue] ($(O)+(-0.5, 0)$) ..   controls +(0, 0.7) and + (-0.5,-0.5)   ..  (TR);
   \draw[blue] ($(O)+(-0.5, 0)$)..   controls +(0, -0.7) and + (-0.5,0.5)   ..  (BR);
   \fill[white] (B) circle (1.5mm);
   \draw[red] ($(O)+(0.5, 0)$) ..  controls +(0, -0.7) and + (0.5,0.5)  ..  (BL);
\end{scope}
\end{scope}}
}
\label{page:clasp}
\]
Hence we have $e(S) = +2$.

In order to compute $e(\Sigma)$, we consider the section of $\Sigma$ given by the pre-image of the parallels. This section does not intersect $\Sigma$, hence $e(\Sigma)$ is equal to $-\ell_c$ where $\ell_c$ is equal to the linking number of the knot $\widetilde{\Gamma}_c^c$ with the pre-image  in $\partial W_F/c$ of its preferred parallel of the edge of $\Gamma$ colored by $c$. 

For computing $\ell_c$, we use the following theorem:
\begin{thm}[{\cite[Theorem 1.1]{PY}}, restricted to our case]\label{thm:PY}
  Let $K_1$ and $K_2$ be two knots in $\mathbb{S}^3$, $J=(J_1, \dots J_l)$ a link in $\mathbb{S}^3$ disjoint from $K_1$ and $K_2$ 
  and $r_1, \dots r_l$ some rational numbers. 
  Let $M$ be the manifold obtained by Dehn surgery along $J$ with coefficients $r_1, \dots r_l$. Suppose that $M$ is a rational homology sphere, then
\begin{align*}
&\lk_M(K_1, K_2)- \lk_{\mathbb{S}^3}(K_1, K_2) \\ & \qquad = - \left(\lk_{\mathbb{S}^3}(K_1,J_1), \dots, \lk_{\mathbb{S}^3}(K_1,J_l)\right)G^{-1} \left(\lk_{\mathbb{S}^3}(K_2,J_1), \dots, \lk_{\mathbb{S}^3}(K_2,J_l)\right)^t,
\end{align*}
where $G= (g_{ij})_{\substack{1\leq i \leq l \\ 1 \leq j \leq l}}$ is the $l\times l$ matrix defined by:
\[
g_{ij} =
\begin{cases}
  \lk_{\mathbb{S}^3}(J_i, J_j) & \textrm{if $i\neq j$,} \\
  r_i & \textrm{if $i=j$.}
\end{cases}
\]
\end{thm}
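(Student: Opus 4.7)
The plan is to compute $\lk_M(K_1, K_2)$ directly from its definition as the intersection number of $K_1$ with a rational 2-chain in $M$ bounding $K_2$. The whole formula will fall out from reading such a chain off from a Seifert surface in $\SS^3$, corrected by rational 2-chains supported in the surgery tori and in $\SS^3\setminus \nu(J)$, with coefficients governed by a linear system whose matrix is exactly $G$.

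I would begin with a Seifert surface $F_2\subset \SS^3$ for $K_2$ and cut out a small open disk around each of its (algebraically) $b_j := \lk_{\SS^3}(K_2,J_j)$ intersection points with $J_j$. The remainder $F_2'$ sits in $\SS^3\setminus \nu(J)\hookrightarrow M$ with boundary $K_2 - \sum_j b_j m_j$. To kill the residual meridians in $M$, I would introduce, for each $i$, the new meridian disk $D_i$ of the $i$-th surgery torus (whose boundary lies in the homology class $p_i m_i + q_i \ell_i$, where $r_i = p_i/q_i$ in lowest terms and $\ell_i$ is the preferred longitude) together with a Seifert-type surface $S_i$ for $J_i$ in $\SS^3\setminus \nu(J)$ bounding $\ell_i - \sum_{j\neq i}\lk(J_i,J_j)m_j$. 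Adding $c_i D_i - c_i q_i S_i$ for each $i$ cancels the longitude contributions $\ell_i$ by design.

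Setting $e_i := c_i q_i$ and using $r_i = p_i/q_i$, the coefficient of $m_j$ in the boundary of the total chain $\Sigma := F_2' + \sum_i(c_i D_i - c_i q_i S_i)$ becomes $-b_j + \sum_i e_i g_{ij}$. Requiring this to vanish for all $j$ yields the linear system $Ge = b$, which is solvable precisely because $\det G \neq 0$ (equivalent to $M$ being a rational homology sphere), giving $e = G^{-1} b$. Finally, I would compute $K_1\cdot\Sigma$ piecewise: $K_1\cdot F_2' = \lk_{\SS^3}(K_1,K_2)$; $K_1\cdot D_i = 0$ since $K_1$ can be arranged to lie outside the surgery tori; and $K_1\cdot S_i = \lk_{\SS^3}(K_1,J_i) =: a_i$. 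Hence
\[
\lk_M(K_1,K_2) - \lk_{\SS^3}(K_1,K_2) = -\sum_i e_i a_i = -a^{T} G^{-1} b,
\]
which is the claim. The main obstacle is the sign and orientation bookkeeping in writing $\partial D_i$ and $\partial S_i$ in terms of $m_i$ and $\ell_i$ (and checking that the intersection $K_1 \cdot S_i$ really equals $a_i$ with the right sign); once conventions are fixed, the symmetry of $G$ and elementary linear algebra finish the proof.
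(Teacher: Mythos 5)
The paper does not prove this statement at all: it is imported verbatim (restricted to knots in $\SS^3$) from Przytycki--Yasuhara \cite[Theorem 1.1]{PY}, so there is no internal proof to compare against. Your proposal is a correct, self-contained proof along the standard lines: build a rational $2$-chain bounding $K_2$ in $M$ out of a punctured Seifert surface $F_2'$, the meridian disks $D_i$ of the surgery solid tori (boundary $p_i m_i + q_i\ell_i$), and punctured Seifert surfaces $S_i$ for the $J_i$ (boundary $\ell_i - \sum_{j\neq i}\lk(J_i,J_j)m_j$), and solve for the coefficients; the linear system is $G^{t}e=b$, which your appeal to the symmetry of $G$ turns into $Ge=b$, solvable exactly when $\det G\neq 0$, i.e.\ when $M$ is a rational homology sphere. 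Two small points you should make explicit to have a complete argument. First, the meridian coefficients only cancel \emph{homologically} on the boundary tori $\partial\nu(J_j)$, so to get an honest rational $2$-chain with boundary $K_2$ you must add $2$-chains supported in those tori realizing the homologies; since $K_1$ is disjoint from the tori these corrections do not change $K_1\cdot\Sigma$, so the computation survives. Second, you implicitly use that $\lk_M(K_1,K_2)$ is well defined independently of the chosen rational chain, which holds because $M$ is a rational homology sphere (so the intersection of $K_1$ with any rational $2$-cycle vanishes); this is the definition used in the paper via Lescop's notes, so your computation does compute the right quantity. With those caveats and the sign/orientation bookkeeping you already flag, the argument is sound and is essentially the classical proof of the surgery formula for linking numbers.
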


Indeed, Montesinos'trick \cite[Section 2]{MR0380802} tells us that if one changes a knot by the local moves: 
\[
   \tikz[scale=1]{\begin{scope}[xscale =0.4, yscale = 0.4]
\newcommand{\localcoordinate}{
 \coordinate (T) at (0,1);
  \coordinate (TL) at (-1,2);
  \coordinate (TR) at (1,2);
   \coordinate (tR) at (1,1.2);
  \coordinate (R) at (1,0);
  \coordinate (r) at (0.5,0);
   \coordinate (O) at (0,0);
     \coordinate (l) at (-0.5,0);
   \coordinate (L) at (-1,0);
    \coordinate (BL) at (-1,-2);
  \coordinate (BR) at (1,-2);
    \coordinate (B) at (0,-1);
}

\begin{scope}
 \localcoordinate
  \draw [thick](TL)  .. controls +(0.6, -1) and +(.6, 1) .. (BL) coordinate[midway] (ML) coordinate[pos=0.43] (MTL);
  \draw [thick] (TR) .. controls +(-0.6, -1) and +(-0.6, 1) .. (BR) coordinate[midway] (MR) coordinate[pos=0.43] (MTR);
  \draw[densely dashed] (ML) -- (MR) node[midway, below] {$\gamma$};
  \draw[densely dotted] (MTL) -- (MTR);
   \draw[dotted] (O) circle ({sqrt(5)});
\end{scope}

\begin{scope}[xshift = 4cm]
 \localcoordinate
 \draw[->] (L) -- (R);
\end{scope}

\begin{scope}[xshift = 8cm]
   \localcoordinate
   \draw[thick] ($(O)+(0.5, 0)$) ..  controls +(0, 0.7) and + (0.5,-0.5)  ..  (TL);
   \draw[thick] ($(O)+(-0.5, 0)$)..   controls +(0, -0.7) and + (-0.5,0.5)   ..  (BR);
   \fill[white] (T) circle (1.5mm);
   \fill[white] (B) circle (1.5mm);
   \draw[thick] ($(O)+(0.5, 0)$) ..  controls +(0, -0.7) and + (0.5,0.5)  ..  (BL);
   \draw[thick] ($(O)+(-0.5, 0)$) ..   controls +(0, 0.7) and + (-0.5,-0.5)   ..  (TR);
      \draw[dotted] (O) circle ({sqrt(5)});

\end{scope}
\node at (4, -3) {A positive clasp move.};

\begin{scope}[xshift =20cm]
\begin{scope}
 \localcoordinate
  \draw [thick](TL)  .. controls +(0.6, -1) and +(.6, 1) .. (BL) coordinate[midway] (ML) coordinate[pos=0.43] (MTL);;
  \draw [thick] (TR) .. controls +(-0.6, -1) and +(-0.6, 1) .. (BR) coordinate[midway] (MR) coordinate[pos=0.43] (MTR);;
  \draw[densely dashed] (ML) -- (MR) node[midway, below] {$\gamma$};
  \draw[densely dotted] (MTL) -- (MTR);
   \draw[dotted] (O) circle ({sqrt(5)});
 \end{scope}

\begin{scope}[xshift = 4cm]
 \localcoordinate
 \draw[->] (L) -- (R);
\end{scope}

\begin{scope}[xshift = 8cm]
   \localcoordinate
   \draw[thick] ($(O)+(-0.5, 0)$) ..   controls +(0, 0.7) and + (-0.5,-0.5)   ..  (TR);
   \draw[thick] ($(O)+(0.5, 0)$) ..  controls +(0, -0.7) and + (0.5,0.5)  ..  (BL);
   \fill[white] (T) circle (1.5mm);
   \fill[white] (B) circle (1.5mm);
   \draw[thick] ($(O)+(0.5, 0)$) ..  controls +(0, 0.7) and + (0.5,-0.5)  ..  (TL);
   \draw[thick] ($(O)+(-0.5, 0)$)..   controls +(0, -0.7) and + (-0.5,0.5)   ..  (BR);
      \draw[dotted] (O) circle ({sqrt(5)});

\end{scope}
\node at (4, -3) {A negative clasp move.};
\end{scope}
\end{scope}} 
\]
the double branched cover of the new knot is obtained by a surgery along $\widetilde{\gamma}$ with coefficient 
$-\frac12$ for a positive clasp move, $+\frac12$ for a negative clasp move, where $\widetilde{\gamma}$ is the 
pre-image of $\gamma$ in the double branched cover. These coefficients are given in the 
canonical\footnote{The longitude is required to be the pre-image of the dotted arc.} basis longitude/meridian of $\widetilde{\gamma}$.  

Since the graph (denoted by $\Gamma'$) in the 4th frame is still a trivial theta graph, all double branched 
cover are diffeomorphic to $\SS^3$. The following picture describes the branched double cover of $\SS^3$ along 
$\Gamma'_{ab}$:
\[
\NB{
\tikz[scale=0.6]{\begin{scope}
    \begin{scope}  [xshift=0cm] 
      \draw[green!50!black]  (0,0) -- (0,-1.5) coordinate[pos=0.15] (GW);
      \fill[white] (GW) circle (1mm);
      \draw[red]  (0,-1.5) .. controls +(5,2) and +(-3,-3) .. (0,1.5) coordinate[pos=0.9] (R) coordinate[pos=0.73] (RW) coordinate[pos=0.4] (RC);
      \fill[white] (RW) circle (1mm);
      \draw[blue]  (0,-1.5) .. controls +(-3,3) and +(5,-2) .. (0,1.5) coordinate[pos=0.9] (B) coordinate[pos=0.375] (BW) coordinate[pos=0.6] (BC);
      \fill[white] (BW) circle (1mm);
      \draw[green!50!black]  (0,1.5) -- (0,0) coordinate[pos=0.5] (G);
      \draw[densely dashed] (BC)  --  (RC) coordinate[pos=0.5] (M);
      \draw (M) node[right, scale=.8] {$\gamma$};
    \end{scope}
    \begin{scope}[xshift=6cm]
      \draw[green!50!black]  (1.5,0.6) .. controls +(0,-.5) and +(0,.5) .. (.6,0) .. controls +(0,-.5) and +(0,.5) .. (1.5,-.6) coordinate[pos=0.5] (GW);
      \fill[white] (GW) circle (1mm);
      \draw[red]  (1.5,-.6) .. controls +(-4,.5) and +(-1,0) .. (0,1.5) coordinate[pos=0.04] (RC) coordinate[pos=0.27] (RW)
      ..controls +(.5,0) and +(2,1).. (1.5,.6) coordinate[pos=0.9] (R)  ;
      \fill[white] (RW) circle (1mm);
      \draw[blue]  (1.5,.6) .. controls +(-4,-.5) and +(-1,0) .. (0,-1.5) coordinate[pos=0.04] (BC) .. controls +(.5,0) and +(2,-1).. (1.5,-.6) coordinate[pos=0.9] (B)  ;
      \draw[densely dashed] (BC)  --  (RC) coordinate[pos=0.5] (M);
      \draw (M) node[right, scale=.8] {$\gamma$};
    \end{scope}
    \begin{scope}[xshift=12cm, yscale =0.9]
      \draw[green!50!black]   (.4,.5) .. controls +(0,-.5) and +(0,.5) .. (1,0) coordinate[pos=0.5] (GW);
      \fill[white] (GW) circle (1mm);
      \draw[red]  (0,1.5) .. controls +(-1,1) and +(-1,-1) .. (0,-1.3) coordinate[pos=0.95] (RC) ;      
      \draw[blue]  (0,1.5) --  (0,-1.3) coordinate[pos=0.1] (BC) ;
      \draw[densely dashed] (BC)  .. controls +(1.1,-.5) and +(1.1,-0.6).. (RC) coordinate[pos=0.8] (D) coordinate[pos=0.5] (M) coordinate[pos=0.19] (DW) coordinate[pos=0.62] 
      (DWW);
      \draw (M) node[left, scale=.8] {$\gamma$};
      \node[draw,scale=.6] at (D) {};
      \draw (D) node[right,scale=.6]{$+\frac12$};
      \fill[white] (DW) circle (1mm);
      \fill[white] (DWW) circle (1mm);
      \draw[green!50!black]  (0,1.5) .. controls +(1,-.5) and +(0,.5) .. (.4,.5); 
      \draw[green!50!black] (1,0) .. controls +(0,-.5) and +(0.8,0) .. (0,-1.3);
    \end{scope}
\node at (3,0) {$=$};
\node at (9,0) {$=$};
\draw[->>] (12,3.4) -- (12, 2.8);
  \begin{scope}[yshift=6cm, yscale=0.8]
      \begin{scope}[xshift=12cm]
        \draw[red]  (0,1.5) -- (0, 2.5) ;      
        \draw[red]  (0,-1.3) --(0,-2.5) coordinate[pos=0.3] (RC) ;      
        \draw[blue]  (0,1.5) --  (0,-1.3) coordinate[pos=0.1] (BC) ;
        \draw[green!50!black] (.4,.5) .. controls +(0,-.5) and +(0,.5) .. (1,0) coordinate[pos=0.57] (GW);
        \fill[white] (GW) circle (1mm);
        \draw[densely dashed] (BC)  .. controls +(1.1,0) and +(1.1,0).. (RC) coordinate[pos=0.8] (D) coordinate[pos=0.5] (M) coordinate[pos=0.19] (DW) coordinate[pos=0.6] (DWW);
        \draw (M) node[left, scale=.8] {$\widetilde{\gamma}$};
        \node[draw,scale=.6] at (D) {};
        \draw (D) node[right,scale=.6]{$+\frac12$};
        \fill[white] (DW) circle (1mm);
        \fill[white] (DWW) circle (1mm);
        \draw[green!50!black] (0,1.5) .. controls +(1,0) and +(0,.5) .. (.4,.5) coordinate[pos=0.3] (N) ; 
        \draw[green!50!black] (N) node[right, scale=.8] {$\widetilde{\Gamma}^{c}_{c}$};      
        \draw[green!50!black]  (0,1.5) .. controls +(-1,0) and +(-0,.5) .. (-.4,.5) coordinate[pos=0.7] (GW);
        \fill[white] (GW) circle (1mm); 
        \draw[green!50!black] (1,0) .. controls +(0,-.5) and +(.5,0) .. (0,-1.3);       
        \draw[green!50!black] (-1,0) .. controls +(0,-.5) and +(-.5,0) .. (0,-1.3)coordinate[pos=0.4] (GWW);
        \fill[white] (GWW) circle (1.2mm); 
        \draw[densely dashed] (BC)  .. controls +(-1.1,0) and +(-1.1,0) .. (RC) coordinate[pos=0.8] (D) coordinate[pos=0.5] (M) coordinate[pos=0.4] (DW);
        \node[draw,scale=.6] at (D) {};
        \draw (D) node[left,scale=.6]{$+\frac12$};
        \fill[white] (DW) circle (1mm);
        \draw[green!50!black] (-.4,.5) .. controls +(0,-.5) and +(0,.5) .. (-1,0) ; 
      \end{scope}
    \end{scope}
\end{scope}}
}
\]

The knot $\widetilde{\gamma}$ is a framed trivial knot with framing +1. In Montesinos' trick, the surgery coefficient is $-\frac{1}{2}$ for the basis given by the framing. For the standard basis it is therefore $\frac{-1 + 2}{2} = \frac{1}{2}$. Moreover, we have $\lk_{\mathbb{S}^3}(\widetilde{\Gamma'}^{c}_{c},\widetilde{\gamma})= \pm 3$ (the sign depends on which orientation we take for $\widetilde{\gamma}$). Thanks to Theorem~\ref{thm:PY}, we have: $\ell_c = -(\pm3)^2(\frac12)^{-1}=-18$ and finally $e(\widetilde{F}^{c}_c)= 2 + 18 = 20$.

The remaining normal Euler numbers are $e(\widetilde{F}^{a}_a)$ and $e(\widetilde{F}^{b}_b)$.

First of all, the three bi-colored knots of $\Gamma$ are trivial so that 
$\sigma(\Gamma_{ab})=\sigma(\Gamma_{bc})=\sigma(\Gamma_{ca})=0$.
Moreover because of the  symmetry of $\Gamma$,  the knots 
$\widetilde{\Gamma}^{a}_{a}$, $\widetilde{\Gamma}^{b}_{b}$ and $\widetilde{\Gamma}^{c}_{c}$ are the same. Denote by $s$ the signature of this knot. By symmetry of the movie relatively to the colors $a$ and $b$
we have $e(\widetilde{F}^{a}_a)=e(\widetilde{F}^{b}_b)$. Denote by $e$ this value.
We have: 
\begin{align*}
  \begin{cases}
    \delta_{ab}(\Gamma)=\delta_{bc}(\Gamma)=\delta_{ca}(\Gamma)=\frac12 s +\frac12 s -0=s \\
    \delta_{ab}(\Gamma)=\frac14e(\widetilde{F}^{a}_a)+\frac14e(\widetilde{F}^{b}_b)
 -\frac12e(F_{ab})=\frac12 e +2, \\
 \delta_{bc}(\Gamma)=\frac14e(\widetilde{F}^{b}_b)+\frac14e(\widetilde{F}^{c}_c)
 -\frac12e(F_{bc})=\frac14 e +5.
  \end{cases}
\end{align*}
It implies that $e=12$ and $s=8$.
In conclusion we have: 
\begin{align*}
  \begin{cases}
    \delta_{ab}(\Gamma)=\delta_{bc}(\Gamma)= \delta_{ac}(\Gamma) = 8, \\
    \sigma (\Gamma) = 0, \\ 
     \widetilde{\sigma}(\Gamma)=24.
  \end{cases}
\end{align*}

In particular the Kinoshita graph is not trivial.

\begin{rmk}
As an intermediate result, our computations give the signature of the knot $\widetilde{\Gamma}^{a}_{a}$
in $\SS^3$ (which is $8$). Note that we did not need to determine this knot which happens to be the mirror image of
$10_{124}$ (see for example \cite{MR3530305}).
\end{rmk}

\begin{figure}[ht]
  \centering
  \tikz{\input{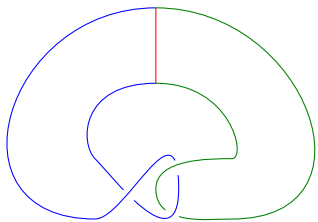}}
  \caption{The trefoil theta graph.}
  \label{fig:trefoil}
\end{figure}

The same kind of computations can be done for other knotted graphs. For the trefoil theta, we find:

\begin{align*}
  \begin{cases}
    \delta_{ab}(\Gamma)=\delta_{ca}(\Gamma)= \frac43 \\
    \delta_{bc}(\Gamma)=4 \\
    \sigma (\Gamma) = -2, \\ 
     \widetilde{\sigma}(\Gamma)=\frac{14}{3}.
  \end{cases}
\end{align*}

As for the Kinoshita graph, the symmetries give an intermediate result: the signature of $\sigma(\widetilde{\Gamma}^{a}_{a})$ which is a knot in the lens space $L(3,1)$ is equal to $\frac{2}{3}$.

\appendix

\section{Spanning foams}
\label{sec:spanning-foams}
The aim of this section is to prove the following statement:
\begin{prop}\label{prop:spanningfoamexists}
  Let $\Gamma$ be a knotted Klein graph in $\SS^3$. There exists a spanning foam for $\Gamma$ in $\BB^4$.
\end{prop}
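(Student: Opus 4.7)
The plan is to construct the spanning foam $F$ explicitly by assembling three geometric pieces: a collar neighborhood of $\Gamma$ pushed into $\BB^4$, a system of binding arcs in the interior, and a capping surface for each color.

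First, set up the combinatorics. Since $\Gamma$ is trivalent, its vertex set $V(\Gamma)$ has even cardinality; fix any perfect matching $\pi$ on $V(\Gamma)$ into pairs. Push $\Gamma$ inside a small collar $C := \Gamma \times [0,\epsilon] \subset \SS^3 \times [0,\epsilon] \subset \BB^4$. Along each edge this contributes a disk facet of the correct color, and along each vertex arc $\{v\}\times[0,\epsilon]$ three facets of distinct colors meet in a Y-shape, so $C$ is already a Klein-foam piece with the required local models. Let $\mathcal{B} := \BB^4 \setminus (\SS^3 \times [0,\epsilon))$ be the complementary 4-ball. For each pair $(v,v') \in \pi$ choose a smoothly embedded arc $\alpha_{vv'} \subset \mathcal{B}$ connecting $v\times\{\epsilon\}$ to $v'\times\{\epsilon\}$; by general position in a 4-manifold (1-manifolds generically disjoint), these arcs can be taken pairwise disjoint except at endpoints. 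Let $B := \bigcup_{(v,v') \in \pi} \alpha_{vv'}$; these arcs serve as the extensions of the foam bindings into the interior of $\BB^4$.

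Next, construct the capping surfaces. For each color $i \in \kg^*$, the set $L_i := (\Gamma_i \times\{\epsilon\}) \cup B$ is a closed 1-manifold in $\overline{\mathcal{B}}$: at each vertex $v \times \{\epsilon\}$ exactly one endpoint of a color-$i$ edge of $\Gamma \times \{\epsilon\}$ meets exactly one endpoint of a binding arc, so every point of $L_i$ has a smooth 1-manifold neighborhood. Since $\mathcal{B}$ is simply connected, $L_i$ is null-homologous, hence bounds a properly embedded, possibly non-orientable surface $F_i \subset \mathcal{B}$. Note that $L_i \cap L_j = B$ for $i \neq j$, so the boundaries of $F_i$ and $F_j$ automatically share precisely $B$.

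Finally, modify the three surfaces so that they intersect pairwise only along $B$. After a generic perturbation, $F_i \cap F_j$ consists of $B$ together with finitely many isolated transverse double points in the interior of $\mathcal{B}$. These interior double points have to be removed in order for the union $F := C \cup F_a \cup F_b \cup F_c$ to have only the two allowed local models (disk and Y-shape). I would do this with the standard Norman/pipe trick: for each transverse double point $p \in F_i \cap F_j$, pick an arc $\gamma \subset F_j$ from $p$ to a point of $\partial F_j = L_j$, thicken $\gamma$ to a 4-ball neighborhood $N(\gamma)$, and modify $F_i$ inside $N(\gamma)$ by piping it parallel to $F_j$ and out through the boundary, thereby reducing $|F_i \cap F_j|$ by one at the cost of adding a handle to $F_i$. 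Iterating eliminates all interior intersections and yields the desired spanning foam.

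The main obstacle is exactly this last step: in smooth dimension four, the classical Whitney trick is obstructed, so removing intersection points of surfaces is a priori delicate. However, because $\mathcal{B}$ is a 4-ball and in particular $H_2(\mathcal{B},\partial \mathcal{B}) \cong H_2(\BB^4,\SS^3) = 0$, the algebraic intersection numbers $[F_i]\cdot[F_j]$ relative to $B$ all vanish, so double points genuinely cancel in pairs. The Norman/pipe trick then succeeds without further obstruction: the arcs $\gamma$ needed at each step can always be chosen inside the already-constructed surface $F_j$ to reach its boundary, and the piping operation, chosen in general position, introduces no new intersections. Once completed, $F = C \cup F_a \cup F_b \cup F_c$ is a properly embedded Klein foam in $\BB^4$ with $\partial F = \Gamma$, as required.
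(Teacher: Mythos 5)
Your construction breaks down at the last step, and the gap is not technical but an honest obstruction. You claim that because $H_2(\mathcal{B},\partial\mathcal{B})=0$ the interior double points of $F_i\cap F_j$ cancel algebraically and can then be removed. Vanishing of the relative homology only says the classes $[F_i]$ are determined by their boundaries; the intersection number of two properly embedded surfaces with fixed boundary is computed by linking data of those boundaries, and it is in general nonzero. Concretely, take the Klein graph with no vertices consisting of two circles forming a Hopf link, colored $\aa$ and $\bb$ (this is allowed by Definition~\ref{dfn:graphs}, and Proposition~\ref{prop:spanningfoamexists} does not assume 3-Hamiltonicity). Then your matching and binding system $B$ are empty, and your foam would be a disjoint union of embedded surfaces $F_\aa, F_\bb \subset \BB^4$ bounding the two components; but any two such surfaces have mod~2 intersection number $\lk(L_\aa,L_\bb)=1$, so they can never be made disjoint, and a transverse double point of two facets is not an allowed local model for a foam. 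Moreover the specific move you invoke does not exist as stated: tubing $F_i$ along an arc of $F_j$ ``out through the boundary'' adds a meridian circle of $L_j$ to $\partial F_i$, so the resulting object no longer bounds $\Gamma$; the Norman trick only cancels \emph{pairs} of intersection points by tubes running between them inside the 4-manifold.

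The missing idea is that a spanning foam may, and in general must, contain singular circles (bindings) and facets of the third color in the interior of $\BB^4$ beyond those forced by the vertices of $\Gamma$ — precisely the flexibility your ansatz $F=C\cup F_\aa\cup F_\bb\cup F_\cc$ with bindings only along $B$ forgoes. This is how the paper proceeds: it builds the foam as a movie, first unzipping the $\aa$-colored edges to reduce $\Gamma$ to a colored link (Lemma~\ref{lem:cob-gamma-2-link}), then performing bicolored crossing changes by \emph{clasps} — foam pieces containing a small facet of the third color attached along a closed binding — to split the sublinks $L'_\aa, L'_\bb, L'_\cc$ into disjoint balls (Lemma~\ref{lem:link-2-unlink}), and finally capping with pushed-in Seifert surfaces. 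The clasps are exactly what kills the linking obstruction that defeats your disjointness argument; to repair your proof you would need to incorporate an analogous mechanism (extra tricolored bindings) rather than trying to separate the three capping surfaces by piping.
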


We need to introduce some elementary foamy cobordisms. A \emph{clasp} is a cobordism which performs a bicolor crossing change. It is described by the movie at the bottom of page~\pageref{page:clasp}. An \emph{unzip} is a simply the second part of a clasp. It removes two vertices of the graph. Locally an \emph{unzip} looks like the picture drawn below.
\[
\NB{\tikz[scale=0.5]{\begin{scope}[xshift=10cm, yshift= 0cm, decoration={markings, mark=at
     position 0.5 with {\arrow{>}}},postaction={decorate}]
\coordinate (A2) at (1,5);
\coordinate (B2) at (4,3);
\coordinate (C2) at (-2,0);
\coordinate (D2) at (1,-2);
\coordinate (E2) at (0,-3);
\coordinate (F2) at (2,0);
\coordinate (A1) at (1,2);
\coordinate (B1) at (4,0);
\coordinate (C1) at (-2,-3);
\coordinate (D1) at (1,-5);
\coordinate (G2) at (1.5,1.8);
\draw[thick, black!50!green] (A1) -- (F2);
\draw[thick, blue] (F2) -- (B1);
\draw[thick, black!50!green] (C1) -- (E2); 
\draw[thick, blue] (E2) -- (D1);
\draw[thick, red] (E2) -- (F2);
\draw[thick, black!50!green] (C2) .. controls +(1,1) and +(0,-1.5) .. (A2);
\draw[thick, blue] (D2) .. controls +(0,1.5) and +(-1,-1) .. (B2);
\draw[thick, black] (E2) .. controls +(0,1) and  +(-0.4,0.3).. (G2).. controls +(0.4,-0.3) and +(0,0.5) .. (F2);
\fill[opacity=0.4, color = black!50!green] (A2) .. controls +(0,0) and +( -0.5, +0.5) ..  (G2).. controls +(0.4,-0.3) and +(0,0.5) .. (F2) -- (A1)-- (A2);
\fill[opacity=0.4, color = black!50!green] (C2) -- (C1) --(E2)  .. controls +(0,1) and  +(-0.4,0.3).. (G2) .. controls +(-0.5,0.5)  and + (0,0) .. (A2)  .. controls +(0,-1.5) and +(1,1) ..  (C2);;
\fill[opacity=0.4, color = blue] (B2) -- (G2)  .. controls +(0.4,-0.3) and  +(0,0).. (F2) -- (B1) --(B2);
\fill[opacity=0.4, color = blue] (B2)  .. controls +(-1,-1) and  +(0,1.5).. (D2) -- (D1) -- (E2)  .. controls +(0,1) and  +(-0.4,0.3).. (G2)--(B2);
\fill[opacity=0.4, color= red] (E2) .. controls +(0,1) and  +(-0.4,0.3).. (G2).. controls +(0.4,-0.3) and +(0,+0.5) .. (F2) --(E2);
\end{scope}}}
\]
\begin{lem}
  \label{lem:cob-gamma-2-link}
  Let $\Gamma$ be a knotted Klein graph in $\SS^3$. There exits a cobordism $F$ in $\SS^3\times I$ from $\Gamma$ to a Klein graph with no vertices (i.~e.~a link).
\end{lem}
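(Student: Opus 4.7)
The plan is to induct on the number of vertices $|V(\Gamma)|$, which is always even since $\Gamma$ is trivalent. The base case $|V(\Gamma)| = 0$ is immediate: $\Gamma$ is already a link, and the product $\Gamma \times I \subset \SS^3 \times I$ is the desired (trivial) cobordism.

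For the inductive step, I would use the unzip cobordism introduced just before the lemma, which is the local saddle move that removes a pair of vertices joined by an edge. First I would verify that such an unzip is always available. The key observation is that a Klein graph cannot contain a loop: a loop at a vertex $v$ would contribute two edge-ends of the \emph{same} color at $v$, contradicting the requirement that the three colors at every vertex be distinct. Hence every edge $e$ of $\Gamma$ has two distinct endpoints $v_1 \neq v_2$. Choosing any edge $e$, say of color $c$, and performing the unzip in a tubular neighborhood of $e$ in $\SS^3 \times I$ yields a foamy cobordism from $\Gamma$ to a new knotted Klein graph $\Gamma' \subset \SS^3$ obtained by deleting $v_1$, $v_2$ and $e$ and joining the two surviving $a$-edges (respectively $b$-edges) incident to $v_1$ and $v_2$ into a single $a$-colored arc (respectively a single $b$-colored arc), where $\{a,b\} = \kg^*\setminus\{c\}$.

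I would then check that $\Gamma'$ is again a valid smoothly embedded Klein graph with $|V(\Gamma)|-2$ vertices: the coloring rule is preserved at every remaining vertex because only same-color arcs are concatenated and no new vertex is created, while smoothness of the embedding is unaffected outside the tubular neighborhood and is built into the local model of the unzip inside it. Applying the induction hypothesis to $\Gamma'$ gives a cobordism in $\SS^3 \times I$ from $\Gamma'$ to a vertex-less Klein graph (i.e.\ a link). Stacking this cobordism on top of the unzip produces the required cobordism from $\Gamma$ to a link in $\SS^3 \times I$.

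The only subtlety in the whole argument is the \emph{no loops} observation, which ensures that an unzip can always be carried out; once this is noted, the induction is entirely mechanical and the proof presents no serious obstacle.
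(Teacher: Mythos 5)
Your argument is correct and is essentially the paper's own proof: both eliminate vertices in pairs via unzip cobordisms, the only difference being that the paper first isotopes all $a$-colored edges of a diagram until they are free of crossings and unzips them all at once, whereas you unzip one edge of arbitrary color at a time and induct on the number of vertices. The single detail you take for granted---that the unzip can be arranged to reconnect like-colored ends---is exactly what the paper handles with a preliminary Rv1 half-twist near one vertex, and it is equally available in your tubular-neighborhood formulation by choosing the band along $e$ (possibly half-twisted) so that the two $a$-legs lie on the same side.
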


\begin{proof}
Let $D$ be a diagram for $\Gamma$, we consider all edges\footnote{Here, we mean \emph{real} edges, i.~e.~not circles.} of $D$ colored by $\aa$, with a sequence of Reidemeister move, we can shrink these edges until they are not involved in any crossing. Then we unzip these edges. Before performing the unzip, we may have to twist one end of theses edges with the Rv1 Reidemeister move.
\end{proof}

\begin{lem}
  \label{lem:link-2-unlink}
  Let $\Gamma$ be a knotted Klein graph in $\SS^3$ with no vertices. This can be seen as a link $L$ colored by $\kg^*$, this means that $L= L_\aa \cup L_\bb \cup L_\cc$. There exists a (foamy) cobordism from $L$ to a link $L'= L'_\aa\cup L'_\bb \cup L'_\cc$, where the component $L'_\aa$,  $L'_\bb$ and $L'_\cc$ are in three disjoint balls. 
\end{lem}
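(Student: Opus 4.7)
The plan is to first use clasp cobordisms to arrange the bichromatic crossings so the three color classes can be separated by ambient isotopy, and then perform that isotopy.

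Fix a diagram $D$ for the colored link $L=L_\aa\cup L_\bb\cup L_\cc$ in $\SS^3$. Each crossing of $D$ is either \emph{monochromatic}, involving two strands of the same color, or \emph{bichromatic}, involving two strands of different colors. A clasp cobordism (the first half of the movie at the bottom of page~\pageref{page:clasp}) is a foamy cobordism that switches the over/under at a single bichromatic crossing, so I may freely toggle the sign of any bichromatic crossing at the cost of composing with a clasp. Apply such clasps to every bichromatic crossing of $D$ until the following \emph{color-order rule} is satisfied: at every crossing between colors $\aa$ and $\bb$, and every crossing between colors $\aa$ and $\cc$, the $\aa$-strand passes over; at every crossing between colors $\bb$ and $\cc$, the $\bb$-strand passes over. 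Call the resulting colored link $L''$; it is obtained from $L$ by a foamy cobordism consisting of a concatenation of clasps.

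Next, I claim that a color-ordered diagram represents a link whose three color classes lie in disjoint balls. Indeed, view $D$ as drawn in a horizontal plane inside $\RR^3\subset \SS^3$ using the usual height function $z$. Since $L''_\aa$ passes over $L''_\bb\cup L''_\cc$ at every crossing, an isotopy supported near $L''_\aa$ (and touching nothing else) can push $L''_\aa$ upward above a horizontal plane $\{z=h_1\}$ that lies above $L''_\bb\cup L''_\cc$. Once $L''_\aa$ is above this plane, a further horizontal isotopy of $L''_\aa$ confines it to a small ball $B_\aa$ contained in the upper half-space. Now, inside the remaining link $L''_\bb\cup L''_\cc$, the color-order rule guarantees that $L''_\bb$ is always over $L''_\cc$, so exactly the same argument splits $L''_\bb$ into a ball $B_\bb$ disjoint from $B_\aa$ and from the remaining strands $L''_\cc$, which by then lies in a ball $B_\cc$ disjoint from $B_\aa$ and $B_\bb$.

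Composing the clasp cobordism from $L$ to $L''$ with the cylinder realizing the ambient isotopy of $L''$ into $B_\aa\sqcup B_\bb\sqcup B_\cc$ yields the required foamy cobordism from $L$ to a link $L'=L'_\aa\cup L'_\bb\cup L'_\cc$ with each $L'_i$ in its own ball. The main subtlety is to verify that clasps really do provide full freedom at every bichromatic crossing, including in the presence of neighboring crossings; but this is clear since each clasp is a local modification of the movie inside a small $4$-ball around one crossing and produces, after the subsequent unzip, a link (no vertices remain) with the single chosen crossing switched.
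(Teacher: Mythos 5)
Your proposal is correct and follows essentially the same route as the paper: the paper's proof simply observes that the three color classes can be unlinked by crossing changes at bicolored crossings, each realized by a clasp cobordism. Your color-order rule and the height-function argument just spell out the standard reason why such crossing changes suffice to put $L'_\aa$, $L'_\bb$ and $L'_\cc$ into disjoint balls.
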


\begin{proof}
  In order to unlink the component $L_\aa$, $L_\bb$ and $L_\cc$ of $L$, it is enough to perform crossing changes on bi-colored crossings. These crossing changes can be achieved by clasps. 
\end{proof}

\begin{proof}[Proof of Proposition~\ref{prop:spanningfoamexists}]
  Thanks to Lemmas~\ref{lem:cob-gamma-2-link} and \ref{lem:link-2-unlink} we can construct a cobordism 
  $F$ in $\SS^3\times I$ from $\Gamma$ to a link $L'=L'_\aa\cup L'_\bb \cup L'_\cc$, where the component $L'_\aa$,  $L'_\bb$ 
  and $L'_\cc$ are in disjoint balls. We now pick  Seifert surfaces for $L'_\aa$,  $L'_\bb$ and $L'_\cc$, push them in $\BB^4$ and concatenate them with $F$. This gives a spanning foam for $\Gamma$. 
\end{proof}

\bibliographystyle{alphaurl}
\bibliography{biblio}

\newcommand{\etalchar}[1]{$^{#1}$}
\begin{thebibliography}{JKL{\etalchar{+}}16}

\bibitem[AS68]{MR0236952}
Michael~F. Atiyah and Isadore~M. Singer.
\newblock The index of elliptic operators. {III}.
\newblock {\em Ann. of Math. (2)}, 87:546--604, 1968.
\newblock URL: \url{https://doi.org/10.2307/1970717}.

\bibitem[CMB16]{MR3530305}
Jack~S. Calcut and Jules~R. Metcalf-Burton.
\newblock Double branched covers of theta-curves.
\newblock {\em J. Knot Theory Ramifications}, 25(8):1650046, 9, 2016.
\newblock URL: \url{https://doi.org/10.1142/S0218216516500462}.

\bibitem[Fla00]{MR1781912}
Erica Flapan.
\newblock {\em When topology meets chemistry}.
\newblock Outlooks. Cambridge University Press, Cambridge; Mathematical
  Association of America, Washington, DC, 2000.
\newblock A topological look at molecular chirality.
\newblock URL: \url{https://doi.org/10.1017/CBO9780511626272}.

\bibitem[Gil92]{MR1145914}
Patrick Gilmer.
\newblock Real algebraic curves and link cobordism.
\newblock {\em Pacific J. Math.}, 153(1):31--69, 1992.
\newblock URL: \url{http://projecteuclid.org/euclid.pjm/1102635971}.

\bibitem[Gil93]{MR1238876}
Patrick~M. Gilmer.
\newblock Link cobordism in rational homology {$3$}-spheres.
\newblock {\em J. Knot Theory Ramifications}, 2(3):285--320, 1993.
\newblock URL: \url{https://doi.org/10.1142/S0218216593000179}.

\bibitem[GL78]{GL78}
Cameron.~McA. Gordon and Richard.~A. Litherland.
\newblock On the signature of a link.
\newblock {\em Invent. Math.}, 47(1):53--69, 1978.
\newblock URL: \url{https://doi.org/10.1007/BF01609479}.

\bibitem[GL79]{GL79}
Cameron.~McA. Gordon and Richard.~A. Litherland.
\newblock On a theorem of {M}urasugi.
\newblock {\em Pacific J. Math.}, 82(1):69--74, 1979.
\newblock URL: \url{https://doi.org/10.2140/pjm.1979.82.69}.

\bibitem[Gor86]{GSign}
Cameron.~McA. Gordon.
\newblock On the {$G$}-signature theorem in dimension four.
\newblock In {\em \`{A} la recherche de la topologie perdue}, volume~62 of {\em
  Progr. Math.}, pages 159--180. Birkh\"auser Boston, Boston, MA, 1986.
\newblock URL: \url{http://data.rero.ch/01-0693409/html}.

\bibitem[JKL{\etalchar{+}}16]{MR3541985}
Byoungwook Jang, Anna Kronaeur, Pratap Luitel, Daniel Medici, Scott~A. Taylor,
  and Alexander Zupan.
\newblock New examples of {B}runnian theta graphs.
\newblock {\em Involve}, 9(5):857--875, 2016.
\newblock URL: \url{https://doi.org/10.2140/involve.2016.9.857}.

\bibitem[Kin58]{MR0102819}
Shin'ichi Kinoshita.
\newblock Alexander polynomials as isotopy invariants. {I}.
\newblock {\em Osaka Math. J.}, 10:263--271, 1958.
\newblock URL: \url{https://doi.org/10.18910/6353}.

\bibitem[Kin72]{MR0312485}
Shin'ichi Kinoshita.
\newblock On elementary ideals of polyhedra in the {$3$}-sphere.
\newblock {\em Pacific J. Math.}, 42:89--98, 1972.
\newblock URL: \url{http://projecteuclid.org/euclid.pjm/1102968011}.

\bibitem[Kir89]{MR1001966}
Robion~C. Kirby.
\newblock {\em The topology of {$4$}-manifolds}, volume 1374 of {\em Lecture
  Notes in Mathematics}.
\newblock Springer-Verlag, Berlin, 1989.
\newblock URL: \url{https://doi.org/10.1007/BFb0089031}.

\bibitem[KT76]{KT76}
Louis~H. Kauffman and Laurence~R. Taylor.
\newblock Signature of links.
\newblock {\em Trans. Amer. Math. Soc.}, 216:351--365, 1976.
\newblock URL: \url{https://doi.org/10.2307/1997704}.

\bibitem[Les15]{MR3586621}
Christine Lescop.
\newblock An introduction to finite type invariants of knots and 3-manifolds
  defined by counting graph configurations.
\newblock {\em Vestn. Chelyab. Gos. Univ. Mat. Mekh. Inform.}, 3(17):67--117,
  2015.
\newblock URL: \url{http://mi.mathnet.ru/vchgu8}.

\bibitem[Lew13]{lewar2013}
Lukas Lewark.
\newblock {\em {H}omologies de {K}hovanov--{R}ozansky, toiles nouées
  pondérées et genre lisse}.
\newblock PhD thesis, Université Paris 7 -- Denis Diderot, 2013.
\newblock {T}hèse de doctorat dirigée par {C}hristian {B}lanchet.
\newblock URL: \url{http://www.theses.fr/2013PA077117}.

\bibitem[Mon75]{MR0380802}
Jos\'e~M. Montesinos.
\newblock {\em Knots, groups, and 3-manifolds. {P}apers dedicated to the memory
  of {R}. {H}. {F}ox}, chapter Surgery on links and double branched covers of
  {$S^{3}$}, pages 227--259. Ann. of Math. Studies, No. 84.
\newblock Princeton Univ. Press, Princeton, N.J., 1975.

\bibitem[Mur70]{Murasugi}
Kunio Murasugi.
\newblock On the signature of links.
\newblock {\em Topology}, 9:283--298, 1970.
\newblock URL: \url{https://doi.org/10.1016/0040-9383(70)90018-2}.

\bibitem[PY04]{PY}
J\'ozef~H. Przytycki and Akira Yasuhara.
\newblock Linking numbers in rational homology 3-spheres, cyclic branched
  covers and infinite cyclic covers.
\newblock {\em Trans. Amer. Math. Soc.}, 356(9):3669--3685, 2004.
\newblock URL: \url{http://dx.doi.org/10.1090/S0002-9947-04-03423-3}.

\bibitem[Rol90]{MR1277811}
Dale Rolfsen.
\newblock {\em Knots and links}, volume~7 of {\em Mathematics Lecture Series}.
\newblock Publish or Perish, Inc., Houston, TX, 1990.
\newblock Corrected reprint of the 1976 original.

\bibitem[Sim87]{MR891813}
Jonathan Simon.
\newblock Molecular graphs as topological objects in space.
\newblock {\em J. Comput. Chem.}, 8(5):718--726, 1987.
\newblock URL: \url{https://doi.org/10.1002/jcc.540080516}.

\end{thebibliography}

\end{document}